\newtheorem{thm}{Theorem}[section]
\newtheorem{lem}[thm]{Lemma}
\newtheorem{cor}[thm]{Corollary}
\newcounter{casecounter-mainthm}
\newcounter{casecounter-fundamental-lemma}
\newcounter{introthmcounter}
\newtheorem{thmAlph}[introthmcounter]{Theorem}
\theoremstyle{definition}
\newtheorem{defn}[thm]{Definition}
\newtheorem{fact}[thm]{Fact}
\newtheorem{case-mainthm}[casecounter-mainthm]{Case}
\newtheorem{claim-defopen}{Claim}
\newtheorem{case-fundamental-lemma}[casecounter-fundamental-lemma]{Case}
\newcommand{\Sub}{\text{Sub}}
\newcommand{\tor}{\text{tor}}
\newcommand{\rc}{\text{rc}}
\newcommand{\GD}{\Gamma \Delta}
\newcommand{\RGD}{\text{Re}(\Gamma\Delta)}
\newcommand{\dcl}{\text{dcl}}
\newcommand{\tp}{\text{tp}}
\newcommand{\p}{\prime}
\newcommand{\pp}{\prime \prime}
\newcommand{\sptp}{\text{sptp}}
\newcommand{\lorm}{\mathcal{L}_{orm}}
\newcommand{\lom}{\mathcal{L}_{om}}
\newcommand{\id}{\text{id}}
\newcommand{\orm}{\text{orm}}
\newcommand{\intr}{\text{int}}
\newcommand{\bdelta}{\varepsilon}
\renewcommand{\bar}[1]{\overline{#1}}
\renewcommand{\Re}{\text{Re}}
\renewcommand{\Im}{\text{Im}}
\renewcommand{\lor}{\mathcal{L}_{or}}
\newcommand{\av}[1]{\left\lvert#1\right\rvert}
\newcommand{\ip}[1]{\langle#1\rangle}
\newcommand{\mbb}[1]{\mathbb{#1}}
\newcommand{\mc}[1]{\mathcal{#1}}
\newcommand{\rbar}{\bar{\mathbb{R}}}
\let\xx@thm\@thm
\title{On expansions of the real field by complex subgroups}
\author{Erin Caulfield}
\address
{Department of Mathematics\\University of Illinois at Urbana-Champaign\\1409 West Green Street\\Urbana, IL 61801}
\email{ecaulfi2@illinois.edu}
\subjclass[2010]{Primary 03C64  Secondary 03C10, 14P10, 20E99}
\date{\today}
\begin{document}

\begin{abstract}
We construct a class of finite rank multiplicative subgroups of the complex numbers such that the expansion of the real field by such a group is model-theoretically well-behaved. As an application we show that a classification of expansions of the real field by cyclic multiplicative subgroups of the complex numbers due to Hieronymi does not even extend to expansions by subgroups with two generators.
\end{abstract}

\maketitle

\section{Introduction}

Let $\rbar := (\mbb{R},<,+,\cdot)$ be the real field. This paper contributes to the classification
of expansions of $\rbar$ by finite rank multiplicative subgroups $S$ of complex numbers. Here we identify $\mbb{C}$ with $\mbb{R}^2$ as usual and consider expansions of $\rbar$ by a binary predicate for the multiplicative subgroup. This is not the first time such structures have been studied. Belegradek and Zilber \cite{BelZilber} and independently G\"unayd\i n \cite{Ayhan} initiated the study of such expansions by fully determining the model theory of such expansions when $S$ is a finite rank subgroup of the unit circle $\mathbb{S}^1$. Using this work Hieronymi \cite{Philipp} established that if $S$ is a cyclic subgroup of $\mbb{C}$ (not necessarily a subgroup of $\mathbb{S}^{1}$), then  exactly one of the
following statements holds:
\begin{itemize}
\item[(i)] $(\rbar, S)$ defines $\mbb{Z}$,
\item[(ii)] $(\rbar, S)$ is d-minimal, or
\item[(iii)] every open definable set in $(\rbar, S)$ is semialgebraic.
\end{itemize}
An ordered structure $\mc{R}$ is \emph{d-minimal} if for every $M \equiv \mc{R}$, every subset of $M$ is a disjoint union of open intervals and finitely many discrete sets. More is known: By Theorem 1.3 in G\"unayd\i n and Hieronymi \cite{AyhanPhilipp} every finite rank subgroup of $\mbb{S}^1$ satisfies (iii), and therefore this classification can be extended to include such groups. This leads naturally to the question whether this holds true for arbitrary finite rank subgroups. The main result of this paper is a negative answer to this question.

\begin{thmAlph}\label{main-thm-statement}
Let $\Gamma$ be a finite rank subgroup of $\mbb{S}^1$ which is dense in $\mbb{S}^1$, let $\Delta = \bdelta^{\mbb{Z}}$ for $\bdelta \in \mbb{R}^{>0}$, and set $S = \GD$. Then every subset of $\mbb{R}^m$ definable in $(\rbar,S)$ is a Boolean combination of sets of the form
\[
\{x \in \mbb{R}^m : \exists y \in S^n \text{ s.t. } (x,y) \in W\}
\]
for some semialgebraic set $W \subseteq \mbb{R}^{m+2n}$. Moreover, every open definable set in $(\rbar,S)$ is definable in $(\rbar,\Delta)$.
\end{thmAlph}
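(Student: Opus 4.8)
Since $\av z=\sqrt{x^2+y^2}$ and $z/\av z$ are semialgebraic functions of $z=(x,y)$, the structure $(\rbar,S)$ is interdefinable with $(\rbar,\Gamma,\Delta)$, so the theorem is really a statement about combining two already understood expansions of $\rbar$: the one by the dense subgroup $\Gamma$ of $\mbb S^1$ (governed by Belegradek--Zilber \cite{BelZilber} and G\"unayd\i n \cite{Ayhan}, and satisfying (iii) by \cite{AyhanPhilipp}) and the one by the discrete subgroup $\Delta=\bdelta^{\mbb Z}$ of $\mbb R^{>0}$ (studied by van den Dries and known to be model complete and d-minimal). The plan is to prove a quantifier-reduction theorem for $T:=\Th(\rbar,S)$ --- modulo $T$, every formula is equivalent to a Boolean combination of formulas $\exists\bar y\,(\bar y\in S^n\wedge\varphi(\bar x,\bar y))$ with $\varphi$ semialgebraic --- via a back-and-forth argument, and then to read off the ``moreover'' from a topological argument using that $S$ is dense in $\overline S:=\{z\in\mbb C:\av z\in\Delta\}$, a set interdefinable with $\Delta$.

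\textbf{Structural input.} First I would record that $S=\GD$ is a finite-rank subgroup of $\mbb C^\times$ and hence has the Mann property: for nonzero real algebraic $a_1,\dots,a_n$ the equation $a_1 s_1+\dots+a_n s_n=1$ has only finitely many solutions in $S^n$ with no vanishing proper subsum. I would also pin down, in an arbitrary $\mc M\models T$, the structure on $\Delta^\ast:=S\cap\mbb R^{>0}$ and $\Gamma^\ast:=S\cap\mbb S^1$: they split $S$ as a direct product $\Delta^\ast\times\Gamma^\ast$, the pair $(\mc M,\Delta^\ast)$ models $\Th(\rbar,\Delta)$, and $(\mc M,\Gamma^\ast)$ models $\Th(\rbar,\Gamma)$ --- so one can import the density/genericity axioms for $\Gamma^\ast$ inside $\mbb S^1$ from \cite{BelZilber,Ayhan} and the corresponding richness axioms for $\Delta^\ast$ from the powers-of-a-real analysis.

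\textbf{The Fundamental Lemma.} Fix $\omega$-saturated $\mc M,\mc N\models T$ and a partial $\lor(S)$-elementary map $f\colon A\to\mc N$ with $A$ a small real closed subfield of $\mc M$ (carrying its induced $S$-set). Given $b\in\mc M$, I would extend $f$ to $A\langle b\rangle$ according to the special type $\sptp(b/A)$ --- the record of the algebraic and multiplicative relations that $b$ enters into with $\Delta^\ast$ and $\Gamma^\ast$ over $A$:
\begin{enumerate}
\item[(i)] $b\in\acl(A)$: nothing to do, as models of $T$ are real closed;
\item[(ii)] $b$ transcendental over $A$ and contributing no new element of $S$: realize $\tp_{\lor}(b/f(A))$ generically in $\mc N$, so that genericity together with o-minimality of the real field forces $\lor(S)$-elementarity, since no atomic $S$-fact is affected;
\item[(iii)] $b$ codes a new element $s=\gamma\delta\in S$: split further according to whether the modulus $\delta$ is new in $\Delta^\ast$ (use genericity of $\Delta^\ast$), whether the argument $\gamma$ is new in $\Gamma^\ast$ (use genericity of $\Gamma^\ast$), and how $s$ is multiplicatively entangled with the finitely many elements of $S$ named in $A$ (controlled by the Mann property), and realize the prescribed special type in $\mc N$.
\end{enumerate}
Iterating yields the full back-and-forth, hence the quantifier reduction; specializing to the standard model $(\rbar,S)$ gives the first assertion of the theorem.

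\textbf{Open definable sets, and the main obstacle.} For the ``moreover'', let $U\subseteq\mbb R^m$ be open and definable; by the first assertion it is a Boolean combination of basic sets $V=\{x:\exists y\in S^n,\ (x,y)\in W\}$ with $W$ semialgebraic. Density of $\Gamma$ in $\mbb S^1$ makes $S$ dense in $\overline S$, and one checks that $\cl(V)$ is obtained by replacing $S$ with $\overline S$ and $W$ with a suitable semialgebraic set; since $\overline S$ is interdefinable with $\Delta$, both $\cl(V)$ and $\intr(V)$ are definable in $(\rbar,\Delta)$. A dimension/frontier induction --- using that the frontier of a basic set is lower-dimensional and again $\Delta$-definable --- then propagates this through Boolean combinations, so that $\cl(X)$ and $\intr(X)$ are $(\rbar,\Delta)$-definable for every definable $X$; as $U=\intr(U)$, the proof is complete. (Equivalently, one shows that expanding $(\rbar,\Delta)$ by the Mann-generic dense group $\Gamma$ does not enlarge the open core.) I expect the crux to be Step (iii) of the Fundamental Lemma: reconciling within a single realization the discrete behaviour of the modulus direction and the dense behaviour of the argument direction, justifying that the two directions may be prescribed independently, verifying that the Mann property accounts for every multiplicative relation that could obstruct the realization, and keeping the case analysis uniform enough to drive the quantifier reduction.
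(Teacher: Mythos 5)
Your overall architecture --- a back-and-forth argument for quantifier reduction to projections over $S$, followed by a separate argument for the open-definable-sets clause --- matches the paper's, but both halves contain significant gaps.

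On the quantifier-reduction half, the skeleton is right but you have not engaged with what is actually the delicate part. The paper's ``Fundamental Lemma'' is not, as in your outline, a statement about extending partial elementary maps; it is a valuation-theoretic statement about the map $\lambda\colon K\to A$ (sending $k>0$ to the largest $a\in A$ with $a\le k$), using the Archimedean valuation and the valuation inequality to control which new elements of $A$ appear in a real closed extension $K'(a)^{\rc}$. The central technical obstruction --- not mentioned in your sketch --- is that when you adjoin a new element of $G$ (your Step (iii), the ``argument direction''), you do not get a substructure closed under $\lambda$: one must first run an iterated construction ($A^{(1)}\subseteq A^{(2)}\subseteq\cdots$, with $A^{(j+1)}=\lambda(K'(A^{(j)},a)^{\rc})$), prove the union is still small and pure and preserves freeness, and only then apply the regular-density lemma to $G$. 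Your assertion that ``the two directions may be prescribed independently'' is precisely what the $\lambda$-bookkeeping is needed to justify, and it is not free.

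On the ``moreover'' clause your route is genuinely different from the paper's and, as stated, it does not work. The paper checks the three hypotheses of Corollary 3.1 of Boxall--Hieronymi (density of the set $D_n$ of $\dcl_{\lor}$-independent tuples, realizability of $\mc L$-types in $U\cap D_n$ using Tychonievich's normal form and a cell-decomposition argument, and a type-transfer step that re-uses the back-and-forth system). You instead propose to show directly that $\cl(V)$ and $\intr(V)$ are $(\rbar,\Delta)$-definable for basic sets $V$ and then do a frontier induction. The first step already fails: take $\Gamma$ generated by $e^{i\pi\varphi}$ with $\varphi$ irrational (so $\Gamma$ is torsion-free), $\Delta=2^{\mbb Z}$, and
\[
W=\{(x,y_1,y_2)\in\mbb R^3: y_1=y_2=\tfrac{1}{\sqrt2},\ 0\le x\le 1\}.
\]
Then $V=\{x:\exists y\in S,\ (x,y)\in W\}=\emptyset$ because $e^{i\pi/4}\notin\Gamma$, so $\cl(V)=\emptyset$; but $\{x:\exists y\in\overline S,\ (x,y)\in W\}=[0,1]$. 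The closure is therefore \emph{not} obtained ``by replacing $S$ with $\overline S$ and $W$ with a suitable semialgebraic set'' in any way that is computable from $\overline S$ and $W$ alone: determining that this $W$ contributes nothing requires a fact about $\Gamma$ that the density of $S$ in $\overline S$ cannot see, and which in general is exactly the content the Mann axioms control. Moreover the subsequent ``dimension/frontier induction'' would need justification in a non-o-minimal setting: $(\rbar,\Delta)$ is only d-minimal, so the frontier of a definable set need not drop in dimension, and $\intr$ does not commute with finite intersections. These are the difficulties that the Boxall--Hieronymi criterion is designed to sidestep.
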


It is not hard to see that $(\rbar,S)$ does not satisfy any of (i)-(iii). First note that $(\rbar,S)$ defines both $\Gamma$ and $\Delta$. If $(\rbar,S)$ defines $\mbb{Z}$, then by \cite[(37.6)]{Kechris}, $(\rbar,S)$ defines every projective subset of $\mbb{R}$. However, it can be checked that $(\rbar,S)$ does not define every projective subset of $\mbb{R}$. For example, if $S$ is countable, then every subset of $\mbb{R}$ which is definable in $(\rbar,S)$ is a Boolean combination of $F_{\sigma}$ sets by \autoref{main-thm-statement}. The projection of $S$ onto the real line is a definable set that is dense and codense, so $(\rbar,S)$ is not d-minimal. Lastly, the complement of $\Delta$ in the real line is open and definable in $(\rbar,S)$, but is not semialgebraic. By picking $\Gamma$ to be the group generated by $e^{i \pi \varphi}$ for some irrational $\varphi \in \mbb{R}^{>0}$, we see that the above classification fails even for multiplicative subgroups generated by two elements.\\

The fact that the sets definable in $(\rbar,S)$ have the form given in \cref{main-thm-statement} is proved in \cref{subsec:qr-thm}. We call this property quantifier reduction. The fact that every open definable set in $(\rbar,S)$ is definable in $(\rbar,\Delta)$ is proved in \cref{subsec:opendef}. In addition to \cref{main-thm-statement} we will also give an axiomatization for such structures in \cref{sec:elequiv}. Let $\Gamma$ be a dense subgroup of $\mbb{S}^1$, let $\Delta = \bdelta^{\mbb{Z}}$ for some $\bdelta \in \mbb{R}^{>0}$, and set $S=\Gamma \Delta$. Since both $\Gamma$ and $\Delta$ are definable in $(\rbar,S)$, we will consider the structure $(\rbar,\Gamma,\Delta)$ instead. We further expand this structure by constant symbols for each element in $\Re(\Gamma) \cup \Im(\Gamma)$ and $\Delta$.

\begin{thmAlph}\label{elequiv-statement}
Let $K$ be a real closed field. Let $G$ be a dense subgroup of $\mbb{S}^{1}(K)$ and let $\gamma \mapsto \gamma^{\p} : \Gamma \to G$ be a group homomorphism. For $\gamma \in \Gamma$ with $\gamma = (\alpha,\beta)$, let $\alpha^{\p}$ and $\beta^{\p}$ be such that $\gamma^{\p} = (\alpha^{\p},\beta^{\p})$. Let $A$ be a subgroup of $K^{>0}$ with a group homomorphism $\delta \mapsto \delta^{\prime} : \Delta \to A$ such that
\begin{itemize}
\item [(i)] $\bdelta^{\prime}$ is the smallest element of $A$ greater than 1, and
\item [(ii)] for every $k \in K^{>0}$, there is $a \in A$ such that $a \leq k < a\bdelta^{\prime}$.
\end{itemize}
 Then
\[
(K,G,A,(\delta^{\prime})_{\delta \in \Delta}, (\gamma^{\prime})_{\gamma \in \Gamma}) \equiv (\bar{\mbb{R}},\Gamma,\Delta,(\delta)_{\delta \in \Delta},(\gamma)_{\gamma \in \Gamma})
\]
if and only if:
\begin{enumerate}
\item for every $\gamma \in \Gamma$ and $n \in \mbb{Z}^{>0}$, $\gamma$ is an $n$th power in $\Gamma$ if and only if $\gamma^{\prime}$ is an $n$th power in $G$;
\item for all primes $p$, $[p]\Gamma = [p]G$;
\item for all $n \in \mbb{Z}^{>0}$, all polynomials $Q(x_1,\ldots,x_n) \in \mbb{Z}[x_1,\ldots,x_n]$, and all tuples $(\gamma_1\delta_1,\ldots,\gamma_n\delta_n)$ of elements of $\GD$,
\[
Q(\Re(\gamma_1\delta_1),\ldots,\Re(\gamma_n\delta_n)) > 0 \text{ if and only if } Q(\Re(\gamma_1^{\prime}\delta_1^{\prime}),\ldots,\Re(\gamma_n^{\prime}\delta_n^{\prime})) > 0;
\]
\item $(K,GA,(\gamma^{\prime}\delta^{\prime})_{\gamma \in \Gamma, \delta \in \Delta})$ satisfies the Mann axioms for $\GD$;
\item all torsion points of $G$ are in $\Gamma$.
\end{enumerate}
\end{thmAlph}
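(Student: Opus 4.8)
Let $\mc{L}$ denote the language of the two structures, so that $\mc{L}$ has a predicate for the subgroup of $\mbb{S}^1$, a predicate for the subgroup of the positive reals, and a constant for each element of $\Re(\Gamma)\cup\Im(\Gamma)\cup\Delta$. The ``only if'' direction I would dispatch by noting that each of (1)--(5) is, or is equivalent to, a scheme of $\mc{L}$-sentences true in $(\rbar,\Gamma,\Delta,(\delta)_\delta,(\gamma)_\gamma)$, so it transfers to any elementarily equivalent structure. Items (1) and (3) are single $\mc{L}$-sentences (one for each choice of the relevant parameters, all of which are named); for (2) one encodes the finite quotients recording the $p$-divisibility of $\Gamma$ using the constants; for (5) one uses that the $n$-torsion of $\Gamma$ is finite for every $n$, so ``every $n$-torsion point of $G$ is a named torsion point of $\Gamma$'' is first order; and for (4) one invokes that $\GD$, having finite rank, has the Mann property, so $(\rbar,\GD,(\gamma\delta)_{\gamma,\delta})$ satisfies the Mann axioms for $\GD$.

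For the ``if'' direction the plan is a back-and-forth argument. First pass to sufficiently saturated elementary extensions $\mc{K}\succeq(K,G,A,\dots)$ and $\mc{R}\succeq(\rbar,\Gamma,\Delta,\dots)$. Call a substructure $(E,H,B)$ of $\mc{K}$ \emph{good} if $E$ is a real closed subfield of small cardinality containing all named constants, $H\leq\mbb{S}^1(E)\cap G_{\mc{K}}$ contains the image of $\Gamma$, $B\leq E^{>0}\cap A_{\mc{K}}$ contains the image of $\Delta$, and $H\cdot B=(G_{\mc{K}}\cdot A_{\mc{K}})\cap E[\sqrt{-1}]$, and similarly on the $\mc{R}$ side. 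Let $\mc{F}$ be the family of ordered-field isomorphisms between the field parts of good substructures that fix all named constants and restrict to group isomorphisms $H\cdot B\to H'\cdot B'$ carrying $H$ onto $H'$ and $B$ onto $B'$. I expect conditions (1)--(5) to say precisely that $\gamma\mapsto\gamma'$ and $\delta\mapsto\delta'$ extend to a member of $\mc{F}$ on the good substructure generated by the constants: (3) and (4) identify the ordered subfields generated by $\GD$ and by its image (the finiteness built into the Mann property pinning down the $\mbb{Q}$-linear relations among group elements), while (1), (2), (5) match up the divisibility and torsion of the groups; hence $\mc{F}\neq\varnothing$.

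It remains to verify the back-and-forth property: given $f\in\mc{F}$ and $c\in\mc{K}$, extend $f$ to a good substructure containing $c$ (the other direction is symmetric). Here I would first form the good substructure generated over $(E,H,B)$ by $c$ --- adjoin $c$, pass to the real closure, and adjoin the new elements of $G_{\mc{K}}\cdot A_{\mc{K}}$ that appear, which are algebraic over $E(c)$ and hence cause no further field growth --- and then realize it over $f$ in $\mc{R}$ as a chain of one-generator extensions of three kinds. Type (a): adjoin a single new element of $A_{\mc{K}}$; its $\mc{L}$-type over $(E,H,B)$ is determined by the cut it fills in $E^{>0}$ and its residues modulo $n$-th powers for all $n$ (a Presburger-type datum on the exponent group), which hypotheses (i), (ii) and the discreteness of $A_{\mc{R}}$ let us realize. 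Type (b): adjoin a single new element of $G_{\mc{K}}$, handled as in the analysis of $(\rbar,\Gamma)$ by Belegradek--Zilber \cite{BelZilber} and G\"unayd\i n \cite{Ayhan}, using density of $G_{\mc{R}}$, the divisibility and torsion data transferred from (1), (2), (5), and --- crucially --- the Mann property from (4). Type (c): adjoin a field element that is not a group element, handled by quantifier elimination for real closed fields after realizing the appropriate cut, with $o$-minimality of the field reduct and saturation ensuring no new group elements are forced. Because $G_{\mc{K}}\cap A_{\mc{K}}=\{1\}$ the combined group is an internal direct product, so type-(a) and type-(b) steps never interfere, and conditions (i), (ii) and (5) are what make the dense and discrete parts of a good substructure's group recoverable in a first-order way --- this simultaneous bookkeeping of the two predicates is the genuinely new feature compared with the one-group case.

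The hard part, I expect, is type (b): one must show that the full $\mc{L}$-type over a good substructure of a new element of $G_{\mc{K}}$ is pinned down by its cut, its divisibility and torsion data, and the ambient Mann data, and therefore is realized on the $\mc{R}$ side. This is exactly where the Mann axioms are indispensable: they guarantee that adjoining such an element produces no unexpected $\mbb{Q}$-linear relations among group elements, so that the real closed field it generates over $E$ is determined up to isomorphism over $E$ by the combinatorial data alone. Establishing this carefully --- together with checking that each of the three one-generator steps preserves goodness and membership in $\mc{F}$ --- is the technical core of the argument.
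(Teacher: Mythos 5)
Your overall strategy matches the paper's: the ``only if'' direction by observing (1)--(5) are first-order schemes, and the ``if'' direction by a back-and-forth system between small distinguished substructures, with the same three kinds of one-generator steps and the Mann property doing the work in the group step. But there is a genuine gap in the claim that ``because $G_{\mc{K}}\cap A_{\mc{K}}=\{1\}$ \ldots\ type-(a) and type-(b) steps never interfere.'' The direct product decomposition of $G_{\mc{K}}A_{\mc{K}}$ is a purely group-theoretic fact and does not address the real interaction, which happens at the level of the field and of condition (ii). If $(E,H,B)$ is to remain ``good'' in any workable sense, $E$ must satisfy the relativized version of (ii): for every $k\in E^{>0}$ there is $a\in B$ with $a\le k<a\bdelta$ (equivalently, $E$ is closed under the floor map $\lambda$). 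Now adjoin a new $g\in G_{\mc{K}}$ and pass to $E(\Re(g))^{\rc}$: the value group of the Archimedean valuation on this field can strictly grow (the valuation inequality bounds the growth by one $\mbb{Q}$-dimension per generator, but does not prevent it), and every new value forces a new coset of elements of $A_{\mc{K}}$ into the $\lambda$-closure. Adjoining those elements of $A_{\mc{K}}$ enlarges the field again, which can force yet more elements of $A_{\mc{K}}$, and so on. So a single type-(b) step (and likewise a type-(c) step) generates an infinite interleaved sequence of type-(a) steps, and one must take a union $A^{(1)}\subseteq A^{(2)}\subseteq\cdots$, $A^{\infty}=\bigcup_j A^{(j)}$, verify that the limit structure is still a small, pure, $\lambda$-closed substructure with the freeness property, and only then realize the $G$-element. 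This iterated closure, governed by a lemma computing $\lambda(E(c)^{\rc})$ in terms of $\lambda(E)$ and the valuation inequality, is the technical core of the argument and is exactly what your proposal omits by asserting non-interference.

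A secondary, smaller point: in your type-(c) step, ``saturation ensuring no new group elements are forced'' is not enough. To find $b$ on the other side realizing the cut and lying outside the real closure of the field generated by $\Re(HB)\cup\Im(HB)$, you need that the complement of any finite union of images of $\lor$-definable functions restricted to $(HB)^m$ is dense in the line; this is a smallness statement that itself rests on the Mann property (via the results of van den Dries--G\"unayd\i n), not on o-minimality or saturation alone. Similarly, your replacement of the paper's freeness condition ($E(i)$ and $\mbb{Q}(G_{\mc{K}}A_{\mc{K}})$ free over $\mbb{Q}(HB)$) by the trace condition $HB=(G_{\mc{K}}A_{\mc{K}})\cap E[\sqrt{-1}]$ is weaker than what the induction needs: freeness is what lets one control transcendence degrees when new group elements are adjoined and is what makes the Mann-property arguments applicable at every stage.
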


The notation $[p]\Gamma$ is defined in \cref{subsec:abelian-groups}. The statement of the Mann axioms is too technical for the introduction and we postpone it until \cref{subsec:mannaxioms}. A few comments about the methods used in this paper and their origins are in order. In \cite[Chapter 6]{Ayhan} an axiomatization for expansions $(\rbar,\Lambda,\Delta)$ of $\rbar$ by a dense multiplicative subgroup $\Lambda$ of $\mbb{R}^{>0}$ and a discrete multiplicative subgroup of $\Delta$ of $\mbb{R}^{>0}$ is presented. The methods used there can be adjusted to work when $\Lambda$ is replaced by a subgroup $\Gamma$ of $\mbb{S}^{1}$. This allows us to prove \cref{elequiv-statement} in \cref{sec:elequiv}. We then use a back-and-forth system constructed in the proof of \cref{elequiv-statement} to prove \cref{main-thm-statement}. For this second step, we can not follow \cite{Ayhan}, because no quantifier reduction statement along the lines of \cref{main-thm-statement} was established there. However, it is not hard to check that the method we use here can be transferred to structures considered in \cite{Ayhan} to give quantifier elimination statements comparable to \cref{main-thm-statement}.

\section{Prerequisites}

Throughout, let $V$ be a unary predicate and $P$ a binary predicate. Let $\lor := \{+,\cdot,-,0,1,<\}$ denote the language of ordered rings. Let $\lom := \{\cdot,1,<\}$ denote the language of ordered monoids. By ``definable'', we mean ``definable with parameters'' unless stated otherwise.

For a real closed field $K$, let $\mbb{S}^1(K) := \{(x,y) \in K^2 : x^2+y^2 = 1\}$. Throughout, we will identify $K(i)$ with $K^2$ in the same way that we identify $\mbb{C}$ with $\mbb{R}^2$. Let $\Re: K^2 \to K$ denote the usual ``real part'' function and let $\Im: K^2 \to K$ denote the usual ``imaginary part'' function. That is, $\Re$ is projection onto the first coordinate and $\Im$ is projection onto the second coordinate.

For a language $\mc{L}$ and set $S$, by $\mc{L}(S)$ we mean the language consisting of $\mc{L}$ together with constants for each element of the set $S$. If $S$ is a subset of $\mbb{C}$, then by $\mc{L}(S)$ we mean $\mc{L}(\Re(S),\Im(S))$ unless specified otherwise. We denote the real closure of a field $F$ by $F^{\rc}$. Let $K^{\p}$ be a subfield of a real closed field $K$ and $S \subseteq K(i)$. When we write $K^{\p}(S)^{\rc}$, we mean $K^{\p}(\Re(S),\Im(S))^{\rc}$.

Throughout, unless specified otherwise, $d$, $m$, and $n$ will range over positive natural numbers.

\subsection{Abelian groups}\label{subsec:abelian-groups}

Let $A$ be a multiplicatively written abelian group.
\begin{enumerate}
\item Let $A^{[n]} = \{a^n : a \in A\}$.
\item Let $A_{\tor} = \{a \in A : a^n = 1 \text{ for some } n \geq 1 \}$, the torsion subgroup of $A$.
\item Let $[d]A$ be equal to $\av{A/A^{[d]}}$ if $A/A^{[d]}$ is finite, and $\infty$ otherwise.
\end{enumerate}

Let $B$ be a subgroup of $A$. We say that $B$ is \emph{pure in} $A$, or that $B$ is a pure subgroup of $A$, if $B \cap A^{[m]} = B^{[m]}$ for all $m \geq 1$. That is, an element in $B$ has an $m$th root in $A$ if and only if it has an $m$th root in $B$.\\

Let $B \subseteq A$ be a subgroup and let $S \subseteq A$. We define $B\ip{S}_A$ to be the subgroup of $A$ given by
\[
\displaystyle B\ip{S}_A := \left\{ a \in A : a^n = a^{\prime}s_1^{k_1} \ldots s_m^{k_m}, a^{\prime} \in B, s_1,\ldots,s_m \in S, k_1,\ldots,k_m \in \mbb{Z}, \right.
\]
\[ \displaystyle\left. m \geq 0, n > 0 \right\}.
\]
If $S = \{a\}$ for some $a \in A \setminus B$, we write $B\ip{a}_A$ instead of $B\ip{S}_A$. Note that $B\ip{S}_A$ is a pure subgroup of $A$. Also note that
\[
B\ip{a_1,\ldots,a_n}_A = ( \ldots (B\ip{a_1}_A)\ip{a_2}_A) \ldots )\ip{a_n}_A
\]
for $a_1,\ldots,a_n \in A$.

\subsection{Fields}
Let $E$ and $F$ be field extensions of a field $k$, where $E,F$ are subfields of a field $K$. We say that $E$ and $F$ are \emph{free over $k$} if any set $S \subseteq E$ which is algebraically independent over $k$ is also algebraically independent over $F$. Equivalently, $E$ and $F$ are free over $k$ if any $S \subseteq E$ which is algebraically dependent over $F$ is algebraically dependent over $k$.

The next fact follows from Proposition 12 in Section 14, Chapter V of \cite{Bourbaki}.

\begin{fact}\label{prop12-bourbaki}
Let $E$ and $F$ be field extensions of a field $k$, where $E,F$ are subfields of a field $K$. Then $E$ and $F$ are free over $k$ if and only if there exists a transcendence basis of $E$ over $k$ which is algebraically independent over $F$.
\end{fact}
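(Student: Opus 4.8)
The plan is to handle the two directions separately. The forward direction is immediate: every field extension has a transcendence basis, and a transcendence basis $B$ of $E$ over $k$ is by definition algebraically independent over $k$, hence algebraically independent over $F$ by the assumed freeness. The substance is in the converse.

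So suppose $B$ is a transcendence basis of $E$ over $k$ which is algebraically independent over $F$. Since algebraic independence is a finitary condition, it suffices to show that an arbitrary finite $S \subseteq E$ which is algebraically independent over $k$ remains algebraically independent over $F$. Fix such an $S$; since $S \subseteq E$ is algebraic over $k(B)$, there is a finite $B_0 \subseteq B$ over which every element of $S$ is algebraic. Set $E_0 := k(S \cup B_0)$: this is finitely generated over $k$, and since $S$ is algebraic over $k(B_0)$ the set $B_0$ is a transcendence basis of $E_0$ over $k$, which is still algebraically independent over $F$ because $B_0 \subseteq B$. Replacing $E$ by $E_0$, I may therefore assume $\trdeg(E/k) < \infty$ with $B$ finite.

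The key point is then that $\trdeg(EF/F) = \trdeg(E/k)$: since $E$, and hence the compositum $EF = F(E)$, is algebraic over $F(B)$, the tower law gives $\trdeg(EF/F) = \trdeg(F(B)/F)$, and the latter equals $|B| = \trdeg(E/k)$ exactly because $B$ is algebraically independent over $F$. To conclude, extend $S$ to a transcendence basis $B_1$ of $E$ over $k$; as before $EF$ is algebraic over $F(B_1)$, so $\trdeg(F(B_1)/F) = \trdeg(EF/F) = \trdeg(E/k) = |B_1|$, and since $B_1$ is finite this forces $B_1$, and hence its subset $S$, to be algebraically independent over $F$.

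I expect the only real subtlety to be the reduction to finite transcendence degree in the second paragraph: for an infinite set, transcendence degree equal to cardinality no longer witnesses algebraic independence, so the counting argument of the third paragraph would break, and one must take care to descend to a finitely generated subextension that still carries a transcendence basis algebraically independent over $F$ (the choice $k(S \cup B_0)$ above, not an arbitrary finitely generated subextension). Organizing this bookkeeping is precisely the role of the cited Proposition~12 of \cite{Bourbaki}.
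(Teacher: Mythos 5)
The paper gives no proof of this fact; it simply cites Proposition 12 in Section 14, Chapter V of Bourbaki, so there is no in-text argument to compare against. Your proof is a correct, self-contained argument, and it is essentially the classical one: the forward direction is immediate from the definition of freeness, and the converse is the standard transcendence-degree count $\trdeg(EF/F) = \trdeg(E/k)$ after reducing to a finitely generated subextension $E_0 = k(S \cup B_0)$ chosen so that it still carries a finite transcendence basis $B_0 \subseteq B$ that is algebraically independent over $F$. Your remark about the subtlety of the reduction is on point: for infinite $B_1$ one cannot conclude algebraic independence from equality of cardinalities, and the careful choice of $E_0$ (rather than an arbitrary finitely generated subextension) is exactly what makes the finite counting argument go through. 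Everything checks out.
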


The next fact follows easily from the definition of freeness. This fact is also part of Exercise 14 in Section 14, Chapter V in \cite{Bourbaki}.

\begin{fact}\label{exercise14-bourbaki}
Let $E,F,G$ be three extensions of a field $k$ contained in a field $K$ such that $F \subseteq G$. If $E$ and $F$ are free over $k$ and $E(F)$ and $G$ are free over $F$, then $E$ and $G$ are free over $k$.
\end{fact}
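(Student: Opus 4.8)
The plan is to unwind the definition of freeness directly, with no appeal to anything beyond the hypotheses. Let $S \subseteq E$ be algebraically independent over $k$; the goal is to show that $S$ is algebraically independent over $G$. Since $E$ and $F$ are free over $k$, the set $S$ is algebraically independent over $F$. Now observe that $S \subseteq E \subseteq E(F)$, so $S$ is a subset of $E(F)$ that is algebraically independent over $F$; applying the hypothesis that $E(F)$ and $G$ are free over $F$ yields that $S$ is algebraically independent over $G$. Since $S$ was an arbitrary subset of $E$ that is algebraically independent over $k$, this is exactly the assertion that $E$ and $G$ are free over $k$.

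There is essentially no obstacle here; the only point deserving a moment's attention is that the second freeness hypothesis concerns subsets of $E(F)$ rather than of $E$, which is precisely why the inclusion $E \subseteq E(F)$ is invoked in the middle step. If one prefers, the identical argument can be routed through \cref{prop12-bourbaki}: choose a transcendence basis $B$ of $E$ over $k$ that is algebraically independent over $F$, note that $B \subseteq E(F)$, so by freeness of $E(F)$ and $G$ over $F$ the set $B$ is algebraically independent over $G$, and then $B$ is a transcendence basis of $E$ over $k$ which — again by \cref{prop12-bourbaki} — witnesses that $E$ and $G$ are free over $k$. Either phrasing is immediate, which is why the statement is labelled a fact rather than a lemma.
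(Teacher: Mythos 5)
Your proof is correct, and it is exactly the direct definitional unwinding that the paper alludes to when it says the fact ``follows easily from the definition of freeness'' (the paper itself gives no written proof, only a citation to Bourbaki). Nothing further is needed.
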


%\begin{defn}
%An ordered abelian group $A$ is said to be \emph{regularly dense} if for every prime $p$ and all $a,b \in A$ with $a < b$, we have $A^{[p]} \cap (a,b) \neq \emptyset$.
%\end{defn}
%
%If $K$ is a real closed field and $A$ is a dense multiplicative subgroup of $K^{>0}$, then $A$ is regularly dense.

\subsection{The Archimedean valuation}
Let $K$ be an ordered field and let $K^{\times}$ denote the nonzero elements of $K$.
For $k \in K$, let $\av{k} := \max\{k, -k\}$. We define an equivalence relation $\sim$ on $K^{\times}$ by setting $x \sim y$ (for $x,y \in K^{\times}$) if and only if there is $n \geq 1$ ($n \in \mbb{N}$) such that
\[
\displaystyle\frac{1}{n} < \av{\frac{y}{x}} < n.
\]
Here $n$ denotes $1+1+\ldots+1$, $n$ times, and $1/n$ denotes its inverse. We say that a positive element $k \in K$ is \emph{finite} if there is some $n \in \mbb{N}$ such that $k < n$.\\

It it is easy to see that $\sim$ is an equivalence relation. Denote the natural quotient map from $K^{\times}$ to $K^{\times}/{\sim}$ by $v$.  For $x,y \in K^{\times}$, we define an operation $+: v(K^{\times}) \times v(K^{\times}) \to v(K^{\times})$ by $v(x)+v(y) := v(xy)$. For $x,y \in K^{\times}$, we define an ordering on $v(K^{\times})$ by taking $v(x) > v(y)$ if and only if $\av{\frac{x}{y}} < \frac{1}{n}$ for all $n \geq 1$. It can be shown that these definitions do not depend on our choice of representative, and ($K^{\times}/{\sim}$,$+$,$<$) is an ordered abelian group. With these definitions, the map $v$ is a valuation on $K$. We call $v$ the \emph{Archimedean valuation} on $K$.\\

Let
\[
R = \{x \in K : \av{x} < n \text{ for some } n \geq 1\}.
\]
Note that $R$ is a convex subring of $K$, and the maximal ideal of $R$ is the set $\{x \in K : \av{x} < \frac{1}{n} \text{ for all } n \geq 1\}$. Thus, $R$ is a valuation ring of $K$, and the Archimedean valuation $v$ is the associated valuation on $K$.\\

If $K$ is a real closed field, then we consider $v(K^{\times})$ as a $\mbb{Q}$-vector space as follows. We define scalar multiplication on $v(K^{\times})$ by $q \cdot v(a) = v(\av{a}^q)$ for $a \in K^{\times}$ and $q \in \mbb{Q}$. It can be shown that these addition and scalar multiplication operations are well-defined by definition of $v$. Moreover, these operations make $v(K^{\times})$ into a $\mbb{Q}$-vector space. Throughout, when we refer to a valuation $v$ on an ordered field $K$, $v$ is the Archimedean valuation on $K$. The following theorem is Corollary 5.6 in van den Dries \cite{TconvexII}, translated to fit our situation. We call this theorem the valuation inequality.

\begin{thm}[Valuation inequality] Let $K$ be a real closed field and let $K^{\p}$ be a real closed subfield of $K$. Let $a \in K$, and let $V = v(K^{\p}(a)^{\rc})$, $W = v(K^{\p})$. As a $\mbb{Q}$-vector space, we have $\dim(V/W) \leq 1$.\end{thm}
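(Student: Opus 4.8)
The statement is a special case of Abhyankar's inequality for valued fields, and the plan is to prove the instance needed here directly. Write $L := K^{\p}(a)^{\rc}$, so that $V = v(L^{\times})$, and note $\trdeg(L/K^{\p}) = \trdeg(K^{\p}(a)/K^{\p}) \leq 1$, since passing to the real closure does not change transcendence degree. First I would record that $V/W$ really is a $\mbb{Q}$-vector space: $v(K^{\times})$ is divisible because $K$ is real closed, and it is torsion-free because $n\,v(x) = 0$ forces both $x$ and $x^{-1}$ to be finite and hence $v(x) = 0$; thus $W \subseteq V \subseteq v(K^{\times})$ are $\mbb{Q}$-subspaces and $V/W$ is again a $\mbb{Q}$-vector space.

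The core is the following claim: if $b, c \in L^{\times}$ have values $v(b), v(c)$ that are $\mbb{Q}$-linearly independent modulo $W$, then $b$ and $c$ are algebraically independent over $K^{\p}$. Granting this, if $\dim_{\mbb{Q}}(V/W) \geq 2$ we could pick such $b$ and $c$ (every element of $V$ is $v$ of some element of $L^{\times}$), contradicting $\trdeg(L/K^{\p}) \leq 1$; hence $\dim_{\mbb{Q}}(V/W) \leq 1$. To prove the claim, suppose there were a nontrivial relation $\sum_{(j,k) \in I} c_{jk}\, b^j c^k = 0$ with $I$ a finite set of pairs of nonnegative integers and each $c_{jk} \in (K^{\p})^{\times}$; then necessarily $|I| \geq 2$. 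For distinct $(j,k), (j',k') \in I$,
\[
v(c_{jk}\, b^j c^k) - v(c_{j'k'}\, b^{j'} c^{k'}) = (j - j')\,v(b) + (k - k')\,v(c) - v(c_{j'k'}/c_{jk}),
\]
and this is nonzero: otherwise $(j-j')v(b) + (k-k')v(c) \in W$ with $(j-j', k-k') \neq (0,0)$, contradicting the $\mbb{Q}$-linear independence of $v(b), v(c)$ modulo $W$. So the values $v(c_{jk}\, b^j c^k)$ for $(j,k) \in I$ are pairwise distinct, and a unique term attains their minimum; by the ultrametric inequality the value of the whole sum then equals that finite minimum, contradicting that the sum vanishes.

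The same monomial-valuation argument works verbatim for any number of elements, which is the easy half of the general Abhyankar inequality. I expect the only mildly delicate points to be bookkeeping — the torsion-freeness of $V/W$, handled above, and the choice of representatives in $L^{\times}$, which is immediate from the definition of $v$ on $L$ — while the real content is the short valuation computation. Alternatively, and as in the text, one may simply invoke Corollary 5.6 of \cite{TconvexII}.
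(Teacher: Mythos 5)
Your proof is correct, and it is genuinely different from what the paper does: the paper offers no argument at all, simply quoting Corollary 5.6 of van den Dries's \emph{T-convexity and tame extensions II}, whereas you prove the needed special case from scratch. Your argument is the standard ``rational rank $\leq$ transcendence degree'' half of Abhyankar's inequality: the key claim that elements $b,c \in (K^{\p}(a)^{\rc})^{\times}$ with $v(b), v(c)$ $\mbb{Q}$-linearly independent modulo $W$ must be algebraically independent over $K^{\p}$ is verified correctly (the monomials in any putative relation have pairwise distinct values, so the value of the sum is the finite minimum of the values of its terms, which is incompatible with the sum being $0$), and combining this with $\trdeg(K^{\p}(a)^{\rc}/K^{\p}) \leq 1$ gives the bound. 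Your preliminary checks that $V/W$ is a $\mbb{Q}$-vector space are consistent with the paper's own setup, which already endows $v(K^{\times})$ with the scalar multiplication $q \cdot v(x) = v(\av{x}^q)$ using that the fields involved are real closed. What your route buys is self-containedness and transparency about exactly which hypothesis is used where (real closedness only to make the value groups divisible and to keep the transcendence degree unchanged under real closure); what the citation buys is brevity and access to the more general $T$-convex framework, which this paper does not otherwise need.
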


\subsection{Regularly discrete groups} If $A$ is an ordered abelian group, we say that $S \subseteq A$ is \emph{convex} if for all $a,b \in S$ and all $x \in A$ such that $a < x < b$, we have $x \in S$.

\begin{defn}
An ordered abelian group $A$ with a smallest element larger than 1 is said to be \emph{regularly discrete} if for all $n \geq 1$ and every infinite convex set $S \subseteq A$, $S \cap A^{[n]} \neq \emptyset$.
\end{defn}
It can be shown that if $A$ is a multiplicative subgroup of a real closed field $K$ with a smallest element larger than 1, then $A$ is regularly discrete. The following lemma follows from the proof of Theorem 2.1 in Zakon \cite{Zakon}.

\begin{lem}\label{reg-discrete-lem} Let $A$ be an ordered abelian group with a smallest element larger than 1, denoted $\bdelta$. The following are equivalent:
\begin{enumerate}
\item $A$ is regularly discrete;
\item for all $n \geq 1$, $A/A^{[n]} = \{\bdelta A^{[n]}, \bdelta^2 A^{[n]}, \ldots, \bdelta^n A^{[n]}\}$.
\end{enumerate}
\end{lem}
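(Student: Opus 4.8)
The plan is to reduce everything to the structure of the cyclic subgroup $C := \{\bdelta^k : k \in \mbb{Z}\}$ sitting inside $A$. First I would record the basic consequence of $\bdelta$ being the least element of $A$ above $1$: there is no $x \in A$ with $\bdelta^j < x < \bdelta^{j+1}$, since otherwise $1 < x\bdelta^{-j} < \bdelta$. A short inductive argument then shows that any $x$ with $\bdelta^{j} \le x \le \bdelta^{k}$ is itself a power of $\bdelta$; hence $C$ is convex, so it is a convex subgroup, and for every $n$ the set $\{x \in A : \bdelta^{-n} < x < \bdelta^n\}$ is exactly $\{\bdelta^k : -n < k < n\}$, in particular finite.

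For (1) $\Rightarrow$ (2), fix $n$ and $a \in A$; since $\bdelta^n \in A^{[n]}$ the inclusion $\{\bdelta A^{[n]},\dots,\bdelta^n A^{[n]}\} \subseteq A/A^{[n]}$ is clear, so I only need to show that $aA^{[n]}$ is one of these cosets. The point is that $aC$ is an infinite convex subset of $A$ — it is a coset of the infinite convex subgroup $C$ — so regular discreteness yields $c \in A$ with $c^n \in aC$, say $c^n = a\bdelta^j$. Then $a = c^n\bdelta^{-j}$, so $aA^{[n]} = \bdelta^{-j}A^{[n]} = \bdelta^i A^{[n]}$, where $i \in \{1,\dots,n\}$ is chosen with $i \equiv -j \pmod n$ (using $\bdelta^n \in A^{[n]}$ once more).

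For (2) $\Rightarrow$ (1), fix $n$ and an infinite convex $S \subseteq A$. The crux is to find inside $S$ a block $a, a\bdelta, \dots, a\bdelta^{n-1}$ of $n$ consecutive elements. Choosing any $s \in S$, the interval $\{x : s\bdelta^{-n} < x < s\bdelta^n\}$ is finite by the first paragraph, and since $S$ is infinite there is $s' \in S$ with $s' \ge s\bdelta^n$ or $s' \le s\bdelta^{-n}$; convexity of $S$ then forces either $s, s\bdelta, \dots, s\bdelta^n \in S$ or $s\bdelta^{-n}, \dots, s \in S$, and in both cases one extracts the desired block. Once $a, a\bdelta, \dots, a\bdelta^{n-1} \in S$, apply (2): $a = \bdelta^i g^n$ for some $i \in \{1,\dots,n\}$ and $g \in A$, so $a\bdelta^{n-i} = (\bdelta g)^n \in A^{[n]}$, and since $n-i \in \{0,\dots,n-1\}$ this element lies in the block, hence in $S$, giving $S \cap A^{[n]} \ne \emptyset$.

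I expect the only real work to be the structural facts of the first paragraph — that $\bdelta^{\mbb{Z}}$ is convex and that the intervals $(\bdelta^{-n},\bdelta^n)$ are finite — because these are exactly what make the coset $aC$ simultaneously infinite and convex in the easy direction, and what turn ``$S$ is infinite'' into ``$S$ contains arbitrarily long blocks of consecutive elements'' in the hard direction. Everything else is bookkeeping with cosets mod $A^{[n]}$. Alternatively, one could simply quote the relevant portion of Zakon's proof of his Theorem 2.1, which is in essence this argument.
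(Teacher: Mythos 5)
Your argument is correct and complete. The paper itself gives no proof of this lemma, deferring instead to the proof of Theorem 2.1 in Zakon's paper, and your write-up supplies exactly the standard argument that citation points to: the key structural facts that $\bdelta^{\mbb{Z}}$ is convex and that the intervals $(\bdelta^{-n},\bdelta^{n})$ are finite, followed by the coset bookkeeping modulo $A^{[n]}$ in each direction. No gaps.
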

In particular, an ordered abelian group $A$ with a smallest element larger than 1 is regularly discrete if and only if $[n]A = n$ for all $n \geq 1$.

It is easy to see that $A$ is regularly discrete if and only if for all $n \geq 1$ and all $a,b \in A$ such that $(a,b)$ has at least $n$ elements, $A^{[n]} \cap (a,b) \neq \emptyset$.

\subsection{Discrete multiplicative subgroups}
Let $K$ be a real closed field and $A \subseteq K^{>0}$ be a subgroup of $K^{\times}$ with a smallest element larger than 1, $\bdelta$. Suppose that for every $k \in K^{>0}$, there is $a \in A$ such that $a \leq k < a\bdelta$.
Define a function $\lambda: K \to A$ by
\[
\lambda(k) = \begin{cases}
0, & k \leq 0 \\
a, & k > 0 \text{ and } a \leq k < a\bdelta
\end{cases}
\]
It is easy to see that for $k \in K^{>0}$, there is exactly one element $a \in A$ such that $a \leq k < a\bdelta$. Therefore, $\lambda$ is well-defined.

\begin{lem}\label{lambda-field-ops}
For $k_1,k_2 \in K^{>0}$, $\lambda(k_1k_2) = \lambda(k_1)\lambda(k_2)$ or $\lambda(k_1k_2) = \lambda(k_1)\lambda(k_2)\bdelta$. In general, for $k_1,\ldots,k_n \in K^{>0}$, $\lambda(k_1 \ldots k_n) = \lambda(k_1)\ldots \lambda(k_n)\bdelta^j$ for some $j \in \{0,\ldots,n-1\}$.
\end{lem}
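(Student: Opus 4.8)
The plan is to first isolate the elementary structural fact about $A$ that drives everything, then deduce the multiplicativity statement almost immediately, treating the general case by the obvious induction (or equivalently by a single direct estimate).

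\medskip

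\noindent\textbf{Step 1: $A$ is discretely ordered with successor map $a \mapsto a\bdelta$.} I would first observe that for every $a \in A$, the element $a\bdelta$ is the immediate successor of $a$ in $A$: certainly $a\bdelta > a$, and if some $c \in A$ satisfied $a < c < a\bdelta$, then $a^{-1}c \in A$ and $1 < a^{-1}c < \bdelta$, contradicting that $\bdelta$ is the smallest element of $A$ greater than $1$. By iterating, it follows that for any $a \in A$ and any $n \geq 1$ the only elements of $A$ lying in the half-open interval $[a,a\bdelta^n)$ are $a, a\bdelta, \ldots, a\bdelta^{n-1}$.

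\medskip

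\noindent\textbf{Step 2: the estimate.} Put $a_i = \lambda(k_i)$ for $i = 1,\ldots,n$, so that $a_i \leq k_i < a_i\bdelta$ by definition of $\lambda$ (this uses the standing hypothesis on $A$, which makes $\lambda$ total and well-defined, as noted just before the lemma). Since all quantities are positive, multiplying these $n$ inequalities gives
\[
a_1\cdots a_n \;\leq\; k_1\cdots k_n \;<\; a_1\cdots a_n\,\bdelta^{\,n}.
\]
Now $\lambda(k_1\cdots k_n)$ is by definition the unique $b \in A$ with $b \leq k_1\cdots k_n < b\bdelta$; in particular $b$ is the largest element of $A$ that is $\leq k_1\cdots k_n$, so the displayed inequalities force $a_1\cdots a_n \leq b < a_1\cdots a_n\,\bdelta^{\,n}$. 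By Step 1, $b = a_1\cdots a_n\,\bdelta^{\,j}$ for some $j \in \{0,\ldots,n-1\}$, which is exactly the assertion $\lambda(k_1\cdots k_n) = \lambda(k_1)\cdots\lambda(k_n)\bdelta^{\,j}$. The first sentence of the lemma is the special case $n = 2$.

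\medskip

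There is no real obstacle here; the only points requiring care are the strictness of the inequalities $a_i \leq k_i < a_i\bdelta$ (so that the half-open interval bookkeeping in Step 1 is correct and $\lambda$ is genuinely single-valued) and the fact that $\lambda(k_1\cdots k_n)$, being the largest element of $A$ not exceeding $k_1\cdots k_n$, must lie above $a_1\cdots a_n$. Alternatively, the general case can be presented as an induction on $n$: write $\lambda(k_1\cdots k_n) = \lambda\big((k_1\cdots k_{n-1})\,k_n\big)$, apply the $n=2$ case to produce a correction factor $\bdelta^{\,j''}$ with $j'' \in \{0,1\}$, and combine it with the inductive correction factor $\bdelta^{\,j'}$, $j' \in \{0,\ldots,n-2\}$, so that the total exponent lies in $\{0,\ldots,n-1\}$.
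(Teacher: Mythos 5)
Your proof is correct and follows essentially the same approach as the paper: multiply the inequalities $\lambda(k_i) \leq k_i < \lambda(k_i)\bdelta$ and conclude by the discreteness of $A$. The paper gives the $n=2$ case explicitly and dismisses the general case as "similar", whereas you make the general-$n$ estimate and the successor structure of $A$ explicit; this is a cleaner presentation of the same idea.
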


\begin{proof}
Let $a_1 = \lambda(k_1)$, $a_2 = \lambda(k_2)$. Note that in particular, $a_1,a_2 > 0$. We have $a_1a_2 \leq k_1k_2 < a_1\bdelta a_2\bdelta = a_1a_2\bdelta^2$. Moreover, $a_1a_2 < a_1a_2\bdelta < a_1a_2\bdelta^2$, so by definition of $\lambda$, we must have $\lambda(k_1k_2) = a_1a_2$ or $\lambda(k_1k_2) = a_1a_2\bdelta$. The proof of the last statement is similar.
\end{proof}

\subsection{Oriented abelian groups}\label{subsec:orag} Oriented groups were introduced by G\"unayd\i n in \cite{Ayhan}. We refer the reader to Section 8.1 of \cite{Ayhan} for the precise definition of an oriented group $G$ with orientation $\mc{O}$.

Let $\lorm$ be the language of oriented monoids; that is, $\lorm = \{\mc{O},1,\cdot\}$, where $\mc{O}$ is a ternary relation.
Let $K$ be a real closed field and $G$ a multiplicative subgroup of $\mbb{S}^1(K) \subseteq K^2$. Thus, the identity of $G$ is the element $(1,0)$ of $K^2$ and multiplication on $G$ is defined by
\[
(x_1,y_1) \cdot (x_2,y_2) = (x_1x_2-y_1y_2, x_1y_2 + y_1x_2).
\]
where multiplication and addition in the components on the right side are performed in $K$. We can make $G$ into an oriented subgroup by taking the orientation $\mc{O}$ on $G$ to be the one inherited from $\mbb{S}^1(K)$. Thus, $\mc{O}$ is defined by the quantifier free $\lor$-formula $\varphi(x_1,y_1,x_2,y_2,x_3,y_3)$ discussed in the example in Section 8.1 of \cite{Ayhan}. We say that $G$ is \emph{dense} in $\mbb{S}^1(K)$ if for all $a,b \in \mbb{S}^1(K)$ such that $\mc{O}(1,a,b)$ holds, there is $g \in G$ with $\mc{O}(a,g,b)$. Note that such a $G$ is also \emph{regularly dense} in $\mbb{S}^1(K)$, that is for all $a,b \in \mbb{S}^1(K)$ such that $\mc{O}(1,a,b)$ holds and for all $n \geq 1$, there is $g \in G$ with $\mc{O}(a,g^n,b)$.\\

%\begin{rmk}Let $K$ be a real closed field and $G$ a dense oriented subgroup of $\mbb{S}^1(K)$. Then $G$ is dense in $\mbb{S}^1(K)$ when we given $K^2$ the topology induced by the ordering on $K$ and give $\mbb{S}^1(K)$ the %topology it has as a subspace of $K^2$.
%\end{rmk}

Let $\Gamma \subseteq G$ be an infinite subgroup. Let $x = (x_1,\ldots,x_n)$ be a tuple of $n$ variables, and let $z = ((z_{11},z_{12}),\ldots,(z_{n1},z_{n2}))$ be a tuple of $n$ pairs of variables. Let $\phi(x)$ be an $\lorm(\Gamma)$-formula. From the definition of multiplication, orientation, and identity in $G$, we see that there is an $\lor(\Gamma)$-formula $\psi_{\phi}(z)$ such that for all $(a_1,\ldots,a_n) \in G^n$ (with $a_i = (a_{i1},a_{i2})$ for some $a_{i1},a_{i2} \in K$),
\[
(G,\mc{O},1,\cdot) \models \phi(a_1,\ldots,a_n) \text{ if and only if } 
\]
\[
(K,<,+,-,0,1,\cdot) \models \psi_{\phi}((a_{11},a_{12}),\ldots,(a_{n1},a_{n2})).
\]
In particular, all quantifiers that appear in $\psi_{\phi}$ must appear in pairs of $\exists$ or $\forall$, and there must be an even number of free variables in $\psi_{\phi}$.\\

Let $\Sigma_{\orm}(\Gamma) := \{\psi_{\phi} : \phi \text{ an } \lorm(\Gamma) \text{-formula}\}$. Note that $\Sigma_{\orm}(\Gamma)$ is closed under conjunctions and disjunctions of formulas, negation, and quantification over a pair of variables.

\begin{defn}\label{defn-p-restriction}
Let $P$ be a binary predicate. The $P$-restriction $\theta_P$ of $\theta \in \Sigma_{\orm}(\Gamma)$ is defined recursively in analogy with the $U$-restriction defined on page 10 of \cite{Ayhan}. In the following, $\theta, \theta^{\p}$, and $\theta^{\pp}$ are formulas in $\Sigma_{\orm}(\Gamma)$.
\begin{itemize}
\item If $\theta$ is an atomic $\lor(\Gamma)$-formula, then $\theta_P := \theta$;
\item if $\theta = \neg \theta^{\p}$, then $\theta_P := \neg \theta_P^{\p}$;
\item if $\theta = \theta^{\p} \wedge \theta^{\pp}$, then $\theta_P := \theta_P^{\p} \wedge \theta_P^{\pp}$;
\item if $\theta = \theta^{\p} \vee \theta^{\pp}$, then $\theta_P := \theta_P^{\p} \vee \theta_P^{\pp}$;
\item if $\theta = \exists x_1 \exists y_1 \theta^{\p}$, then $\theta_P := \exists x_1 \exists y_1 (P(x_1,y_1) \wedge \theta_P^{\p})$;
\item if $\theta = \forall x_1 \forall y_1 \theta^{\p}$, then $\theta_P := \forall x_1 \forall y_1 (P(x_1,y_1) \rightarrow \theta_P^{\p})$.
\end{itemize}
\end{defn}

The following lemma is proved in Section 8.1.2 of \cite{Ayhan}. We will use this lemma in the proof of \cref{elequiv-statement} and to prove quantifier elimination in \cref{subsec:qe-lemmas}.

\begin{lem}\label{claim-812}
Let $A$ and $B$ be regularly dense oriented abelian groups such that $[p]A = [p]B$ for every prime $p$. Let $A^{\p}$ and $B^{\p}$ be pure subgroups of $A$ and $B$ respectively such that $A_{\tor}^{\p} = A_{\tor}$ and $B_{\tor}^{\p} = B_{\tor}$. Let $f: A^{\p} \to B^{\p}$ be an oriented abelian group isomorphism. Suppose that $B$ is $\kappa$-saturated, where $\kappa > \av{B^{\p}}$ is uncountable, and let $a \in A \setminus A^{\p}$. There is $b \in B$ such that there is an oriented group isomorphism $A^{\p}\ip{a}_A \to B^{\p}\ip{b}_B$ extending $f$ which takes $a$ to $b$.
\end{lem}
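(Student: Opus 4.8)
The plan is to find $b$ by a single saturation argument, after reducing the problem to realizing a suitable $1$-type over $B^{\p}$.

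\emph{Structural reduction.} I would first observe that $A/A^{\p}$ is torsion-free: if $x^n \in A^{\p}$ with $n \geq 1$ minimal and $n > 1$, then for a prime $p \mid n$ we have $(x^{n/p})^p = x^n \in A^{\p} \cap A^{[p]} = (A^{\p})^{[p]}$ by purity, say $x^n = c^p$ with $c \in A^{\p}$; then $x^{n/p}c^{-1}$ is torsion, hence lies in $A_{\tor} = A_{\tor}^{\p} \subseteq A^{\p}$, so $x^{n/p} \in A^{\p}$, contradicting minimality. The same holds for $B$. Thus $a^k \notin A^{\p}$ for all $k \geq 1$, the image of $a$ in $A/A^{\p}$ generates a copy of $\mbb{Z}$, and $A^{\p}\ip{a}_A$ is the preimage in $A$ of the pure hull of that $\mbb{Z}$ inside $A/A^{\p}$, a subgroup of $\mbb{Q}$ containing $1$; likewise for $B$. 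Hence $a$ has no nontrivial multiplicative relations over $A^{\p}$, and the only data to transfer are (i) which $c a^k$ ($c \in A^{\p}$, $k \in \mbb{Z}$) are $n$th powers in $A$, and (ii) the orientation of $a$, together with that of its roots, over $A^{\p}$. The hypotheses $A_{\tor} = A_{\tor}^{\p}$ and $B_{\tor} = B_{\tor}^{\p}$ are exactly what tame (ii): every root of unity occurring in $A$ already lies in $A^{\p}$, so the $n$th roots of a given $c a^k$ inside $A^{\p}\ip{a}_A$ differ only by elements of $A^{\p}$, and their positions in the orientation are determined once that of $a$ over $A^{\p}$ is; the same holds on the $B$-side.

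\emph{The type.} Let $p(y)$ be the set of $\lorm(B^{\p})$-formulas consisting of the full quantifier-free $\lorm(B^{\p})$-type obtained from the quantifier-free $\lorm(A^{\p})$-type of $a$ by renaming each constant $c$ as $f(c)$ (in particular $y^k \neq c$ for $k \neq 0$, $c \in A^{\p}$), together with, for all $c \in A^{\p}$, $k \in \mbb{Z}$, $n \geq 1$, the sentence $\exists z\,(z^n = f(c)\,y^k)$ if $c a^k \in A^{[n]}$ and its negation otherwise. Since $f$ maps $A^{\p}$ bijectively onto $B^{\p}$, $|p(y)| \leq |B^{\p}| + \aleph_0 < \kappa$, so $p$ is realized in $B$ as soon as it is finitely satisfiable. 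Given a realization $b$, the map $a \mapsto b$ extends $f$ to an oriented group isomorphism $A^{\p} a^{\mbb{Z}} \to B^{\p} b^{\mbb{Z}}$ (well defined and injective because there are no relations on either side, orientation-preserving because the quantifier-free type fixes the orientation of every power of $b$), and this extends to an isomorphism $A^{\p}\ip{a}_A \to B^{\p}\ip{b}_B$ by sending an $h$ with $h^n = a^{\p}a^k$ to an $n$th root of $f(a^{\p})b^k$ inside $B^{\p}\ip{b}_B$ (which exists by the second block of $p$), the roots being chosen coherently via $f$ on the torsion subgroups as in the structural reduction; checking that the result is a bijective, orientation-preserving homomorphism is then routine.

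\emph{Consistency of $p$.} Fix a finite $p_0 \subseteq p$, involving constants $f(c_1),\dots,f(c_r)$, finitely many exponents, and finitely many orientation atoms. The orientation atoms cut out a nonempty finite union $J$ of arcs of $B$ (with respect to $\mc{O}$): nonempty because $a$ witnesses the corresponding atoms over the subgroup generated by $c_1,\dots,c_r$ in $A$, and, $B$ being regularly dense, the transferred configuration (along $f$, an oriented embedding on that subgroup) is realized in $B$. After clearing the powers $y^k$, the divisibility atoms ask $y$ to lie in finitely many cosets of subgroups $B^{[m]}$ and to avoid finitely many others. Using purity (to get $[p](A/A^{\p}) = [p]A/[p]A^{\p}$ and likewise for $B$), the hypotheses $[p]A = [p]B$ and $[p]A^{\p} = [p]B^{\p}$ (from $f$), so that $[p](A/A^{\p}) = [p](B/B^{\p})$, together with the torsion-freeness of $A/A^{\p}$ and $B/B^{\p}$ from the structural reduction (which forces $[p^j](A/A^{\p})$ to be determined by $[p](A/A^{\p})$, hence $[p^j](A/A^{\p}) = [p^j](B/B^{\p})$ for the finitely many primes and exponents in play), and the fact that $a$ satisfies the corresponding conditions in $A$, one verifies that the required positive cosets are jointly consistent; by regular density of $B$ each coset of $B^{[m]}$ is dense, so their common refinement is a dense coset meeting $J$, and deleting the finitely many proper sub-cosets coming from the negated divisibility requirements still leaves a point of $B$ in $J$, which satisfies $p_0$. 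I expect this last step, fitting the orientation and the (positive and negative) divisibility requirements simultaneously, to be the main obstacle; granting it and the structural reduction, the rest is bookkeeping with roots of unity and the torsion hypotheses.
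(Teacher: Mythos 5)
The paper does not actually supply a proof of this lemma; it cites Section 8.1.2 of G\"unayd\i n's thesis, so there is no in-text argument to compare yours against. On its own terms, your sketch follows the standard saturation strategy one would expect (and almost certainly the route taken in the cited reference): reduce to realizing a $1$-type over $B^{\p}$ encoding the orientation cut and the divisibility data of $a$, then build the isomorphism on the pure hull. The structural reduction at the start is correct and cleanly done, and the overall plan is sound.

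The serious gap is in the passage from the realization $b$ to the isomorphism $A^{\p}\ip{a}_A \to B^{\p}\ip{b}_B$, which you wave away as ``the roots being chosen coherently \ldots{} is then routine.'' It is not. For each $q = k/n$ in the rank-one group $R := A^{\p}\ip{a}_A/A^{\p}$ you must pick an $n$th root $b_q$ of $f(a^{\p}_q)b^k$, and (i) the assignment $h_q a' \mapsto b_q f(a')$ must be a group homomorphism, which requires the $2$-cocycle $b_q b_{q'} b_{q+q'}^{-1}\,f(h_q h_{q'} h_{q+q'}^{-1})^{-1}$ (taking values in $B_{\tor}$) to be killed by an admissible coboundary; (ii) the resulting map must preserve the orientation and the $A$-, $B$-divisibility of every element of $A^{\p}\ip{a}_A$, not just of $a$ itself. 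Your type $p(y)$ speaks only about $b$; it does not constrain where the roots of $f(c)b^k$ sit in the cyclic order or which cosets of $B^{[m]}$ they occupy, so there is no reason a priori that the ``coherent'' choice needed for (i) can also be made to satisfy (ii). One workable fix is to replace the single saturation step by an $\omega$-step induction along a filtration $R = \bigcup_n \tfrac{1}{m_n}\mathbb{Z}$ of $R$: at stage $n+1$ realize a fresh type for the new root $b_{n+1}$ over the already-constructed image of $C_n := \pi^{-1}(\tfrac{1}{m_n}\mathbb{Z})$, with the orientation cut and divisibility conditions of $h_{n+1}$ built into that type; each $C_{n+1}/C_n$ is finite cyclic so the extension problem at each stage is trivial, and all intermediate groups have size $< \kappa$ so saturation still applies. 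Alternatively, realize a type in $|R|$-many variables from the outset. Either way, this is the technical core that the hypotheses (purity, $A_{\tor} = A^{\p}_{\tor}$, regular density, matching $[p]$-indices) are designed to make go through, and a proof that labels it routine has skipped the main work. The consistency-of-$p$ paragraph also only gestures at the coset-counting argument, but you at least flag that explicitly, and the ideas you list (matching $[p^j]$ via torsion-freeness, density of each coset of $B^{[m]}$) are the right ones.
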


\section{An Axiomatization}

For the rest of this paper, we will fix a finite rank multiplicative subgroup $\Gamma$ of $\mbb{S}^1$ and a multiplicative subgroup $\Delta$ of $\mbb{R}^{>0}$ of the form $\bdelta^{\mbb{Z}}$ for some $\bdelta > 1$ in $\mbb{R}$.

\begin{defn} Let $K$ be a field and $G$ a multiplicative subgroup of $K$. For $a_1,\ldots,a_n \in \mbb{Q}$ ($n \geq 1$), a nondegenerate solution in $G$ to the equation
\begin{equation}\label{mann-eqn}
a_1x_1+\ldots+a_nx_n = 1
\end{equation}
is a tuple $(g_1,\ldots,g_n) \in G^n$ such that
\[
a_1g_1+\ldots+a_ng_n = 1
\]
and $\sum_{i \in I} a_ig_i \neq 0$ for each nonempty subset $I \subseteq \{1,\ldots,n\}$. We say that $G$ has the \emph{Mann property} if every equation of the form \cref{mann-eqn} has only finitely many nondegenerate solutions in $G$.
\end{defn}

Every finite rank multiplicative subgroup of a field of characteristic 0 has the Mann property. This follows directly from Theorem 1.1 of Evertse, Schlickewei, and Schmidt \cite{ESS}. In particular, since $\GD$ is a finite rank multiplicative subgroup of $\mbb{C}$, $\GD$ has the Mann property. Note that by \cite[Proposition 1.1]{BelZilber}, $[n]\Gamma$ is finite for each $n \geq 1$.

In this section, we give an axiomatization for a theory $T$ and deduce some first consequences of this theory.

\subsection{Orientation axioms and Mann axioms}\label{subsec:mannaxioms} We now define two important sets of axioms: orientation axioms and Mann axioms.

Given any polynomial $Q(x_1,\ldots,x_n) \in \mbb{Z}[x_1,\ldots,x_n]$ and tuple $\gamma\delta := (\gamma_1\delta_1,\ldots,\gamma_n\delta_n)$ of elements of $\GD$, we say the \emph{ordering axiom for $\gamma\delta$ and $Q$} is the sentence
\[
Q(\Re(\gamma_1\delta_1),\ldots,\Re(\gamma_n\delta_n)) > 0
\]
if this holds in $\mbb{R}$, and otherwise it is the sentence
\[
Q(\Re(\gamma_1\delta_1),\ldots,\Re(\gamma_n\delta_n)) \leq 0.
\]
The \emph{orientation axioms of $\GD$ } is the collection of these sentences for each $n$, each polynomial $Q \in \mbb{Z}[x_1,\ldots,x_n]$ and tuple $\gamma\delta$.\\

For every linear equation
\begin{equation*}
a_1x_1+\ldots+a_nx_n = 1, (n \geq 1, a_1,\ldots,a_n \in \mathbb{Q}^{\times})
\end{equation*}
take the finite list of its nondegenerate solutions in $\Gamma\Delta$, say
\[
\gamma_1\delta_1 = (\gamma_{11}\delta_{11},\ldots,\gamma_{1n}\delta_{1n}), \ldots, \gamma_k\delta_k = (\gamma_{k1}\delta_{k1},\ldots,\gamma_{kn}\delta_{kn}).
\]
For $n$-tuples $y = (y_1,\ldots,y_n)$ and $z = (z_1,\ldots,z_n)$ and $\delta \in \Delta$ let
$P(\delta^{-1}y,\delta^{-1}z)$ abbreviate
\[
P(\delta^{-1}y_1,\delta^{-1}z_1) \wedge \ldots \wedge P(\delta^{-1}y_n,\delta^{-1}z_n)
\]
and let $(y,z) = \gamma_j\delta_j$ abbreviate
\begin{align*}
y_1 = \Re(\gamma_{j1}\delta_{j1}) \wedge \ldots \wedge y_n = \Re(\gamma_{jn}\delta_{jn}) \\
\wedge z_1 = \Im(\gamma_{j1}\delta_{j1}) \wedge \ldots \wedge z_n = \Im(\gamma_{jn}\delta_{jn}).
\end{align*}

Let \emph{the Mann axiom of $\GD$ corresponding to the equation $a_1x_1+\ldots+a_nx_n = 1$} be the sentence
\begin{multline*}
\displaystyle \forall y \forall z \left[ \forall b \left( V(b) \wedge P(by,bz) \wedge \sum_{i=1}^n a_iy_i = 1 \wedge \sum_{i=1}^n a_iz_i = 0 \wedge \right. \right. \\
\left. \left. \bigwedge_I (\sum_{i \in I} a_iy_i \neq 0 \vee \sum_{i \in I} a_iz_i \neq 0) \right) \rightarrow \bigvee_{j=1}^k (y,z) = \gamma_j\delta_j \right].
\end{multline*}
Here the conjunction $\bigwedge_I$ is taken over all nonempty proper subsets $I$ of $\{1,\ldots,n\}$.\\

Let $K$ be a real closed field. Suppose the predicate $P$ is interpreted as a subgroup $G \subseteq \mbb{S}^1(K)$ and the predicate $V$ is interpreted as a subgroup $A \subseteq K^{>0}$. In this setting, the Mann axiom of $\GD$ corresponding to the equation
\[
a_1x_1+\ldots+a_nx_n = 1
\]
can be interpreted as follows. Let $\gamma_1\delta_1,\ldots,\gamma_n\delta_n$ be the solutions to this equation in $\GD$. Suppose we have $(y_1,z_1),\ldots,(y_n,z_n) \in GA$ such that
\[
a_1(y_1+iz_1)+\ldots+a_n(y_n+iz_n) = 1.
\]
In particular, we must have $\sum_{i=1}^n a_iz_i = 0$ and $\sum_{i=1}^n a_iy_i = 1$. Suppose also that for all proper subsets $I \subseteq \{1,\ldots,n\}$, we have $\sum_{i \in I} a_iy_i \neq 0$ and $\sum_{i \in I} a_iz_i \neq 0$. That is, $((y_1,z_1),\ldots,(y_n,z_n))$ is a nondegenerate solution of \cref{mann-eqn}. Then letting $y = (y_1,\ldots,y_n)$ and $z = (z_1,\ldots,z_n)$, we must have $(y,z) = \gamma_j\delta_j$ in the sense defined in above.

\subsection{The theory $T$} We now state the axioms for our theory.

Let $\lor(P,V,\Gamma,\Delta)$ denote the language consisting of the language of ordered rings together with a binary predicate symbol $P$, unary predicate symbol $V$, and constants for each element of $\Re(\Gamma) \cup \Im(\Gamma) \cup \Delta$. For $\gamma \in \Gamma$ with $\gamma = (\alpha,\beta)$, let $\gamma^{\p} := (\alpha^{\p},\beta^{\p})$. Although $\lor(P,V,\Gamma,\Delta)$-structures have constants for each element of $\Re(\Gamma) \cup \Im(\Gamma) \cup \Delta$, we will write $\lor(P,V,\Gamma,\Delta)$-structures in the form
\[
(K,G,A,(\gamma^{\p})_{\gamma \in \Gamma}, (\delta^{\p})_{\delta \in \Delta})
\]
for convenience.

\begin{defn}\label{defn-t} Let $T$ be the $\lor(P,V,\Gamma,\Delta)$-theory whose models have the form
\[
(K,G,A, (\gamma^{\prime})_{\gamma \in \Gamma}, (\delta^{\prime})_{\delta \in \Delta})
\]
such that:
\begin{enumerate}
\item $K$ is a real closed field
\item $A$ is a subgroup of $K^{>0}$ such that $\bdelta^{\prime}$ is the smallest element of $A$ larger than 1
\item $G$ is a dense subgroup of $\mathbb{S}^1(K)$
\item for all $k \in K^{>0}$, there is $a \in A$ such that $a \leq k < a\bdelta$
\item $\delta \mapsto \delta^{\prime}: \Delta \to A$ and $\gamma \mapsto \gamma^{\p}$: $\Gamma \to G$ are group homomorphisms
\item $(K,(\gamma^{\prime}\delta^{\prime})_{\gamma \in \Gamma,\delta \in \Delta})$ satisfies the orientation axioms for $\Gamma\Delta$.
\item $(K,GA,(\gamma^{\prime}\delta^{\prime})_{\gamma \in \Gamma, \delta \in \Delta})$ satisfies the Mann axioms for $\GD$, $P$
\item $G_{\tor} = \Gamma_{\tor}$
\end{enumerate}
\end{defn}

Since $A$ is a multiplicative subgroup of the real closed field $K$ with a smallest element larger than 1, in particular, $A$ is regularly discrete. By \cref{reg-discrete-lem}, for each $n > 0$ and each $a \in A$, there is $i \in \{1,\ldots,n\}$ such that $a$ is congruent to $(\bdelta^{\p})^i$ modulo $A^{[n]}$.

%It can be shown that axioms (2) and (4) above together imply that for every $n \in \mbb{Z}^{>0}$, there are $\delta_{1}^{\prime},\ldots,\delta_{n}^{\prime} \in \Delta^{\prime}$ such that for each $a \in A$, $a$ is equivalent to one of $\delta_{1}^{\prime},\ldots,\delta_{n}^{\prime}$ modulo $A^{[n]}$, and if $i \neq j$, then $\delta_{i}^{\prime}$ is not equivalent to $\delta_{j}^{\prime}$.

For convenience, we will identify the subgroup $\Gamma^{\prime}$ of $K^2$ with $\Gamma$ and the subgroup $\Delta^{\prime}$ of $K$ with $\Delta$. Thus, we will write $\gamma$ rather than $\gamma^{\prime}$ and $\delta$ rather than $\delta^{\prime}$.

\subsection{Consequences of $T$.} Throughout this section, let $\mc{M} := (K,G,A,(\gamma)_{\gamma \in \Gamma},(\delta)_{\delta \in \Delta})$ be a model of $T$. In this subsection we derive some consequences of $T$. We begin by noting that since $(K,(\gamma\delta)_{\gamma \in \Gamma,\delta \in \Delta})$ satisfies the orientation axioms for $\GD$, $\bdelta$ is finite.

\begin{lem}\label{lambda-closure-lem1}
For all finite $x \in K^{>0}$, there is $l \in \mbb{Z}$ such that $\bdelta^l \leq x < \bdelta^{l+1}$. Therefore, for each finite $x \in K^{>0}$, $\lambda(x) = \bdelta^l$ for some $l \in \mbb{Z}$.
\end{lem}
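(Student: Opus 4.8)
The plan is to use two facts about $\bdelta$, which — after the identification of $\Delta^{\prime}$ with $\Delta$ — denotes the smallest element of $A$ greater than $1$. The first is that $\bdelta$ is finite in $\mc{M}$, as already noted just before the lemma. The second, which is the only point that is not pure ordered-field arithmetic, is that $\bdelta$ is bounded away from $1$ by a standard amount; this is where the orientation axioms are used. Since $\bdelta > 1$ in $\mbb{R}$, there are $p, q \in \mbb{Z}^{>0}$ with $p/q < \bdelta - 1$, so for $Q(x) := qx - (p+q) \in \mbb{Z}[x]$ the inequality $Q(\bdelta) > 0$ holds in $\mbb{R}$. Hence the ordering axiom for $Q$ and the one-element tuple $\bdelta \in \GD$ (that is, $\gamma\delta = \bdelta$ with $\gamma$ the identity of $\Gamma$, so that $\Re(\gamma\delta) = \bdelta$) gives $\mc{M} \models q\bdelta - (p+q) > 0$, i.e.\ $\bdelta > 1 + c$ in $\mc{M}$, where $c := p/q \in \mbb{Q}^{>0}$. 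By Bernoulli's inequality, valid in any ordered field, $\bdelta^{l} \geq (1+c)^{l} \geq 1 + lc$ for all integers $l \geq 0$.

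Now fix a finite $x \in K^{>0}$ and choose $n \in \mbb{N}$ with $x < n$. I claim $S := \{ l \in \mbb{Z} : \bdelta^{l} \leq x \}$ is bounded above and nonempty. It is bounded above because $l \in S$ with $l \geq 0$ forces $1 + lc \leq \bdelta^{l} \leq x < n$, hence $l < (n-1)/c$. It is nonempty because $0 \in S$ when $x \geq 1$, while when $x < 1$ the element $1/x$ is also finite, and then for $m \in \mbb{N}$ with $1/x < m$ and any integer $l \leq -(m-1)/c$ we get $\bdelta^{-l} \geq 1 + (-l)c \geq m > 1/x$, so $\bdelta^{l} < x$ and $l \in S$. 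Put $l := \max S$. Then $\bdelta^{l} \leq x$, and since $l+1 \notin S$ we have $\bdelta^{l+1} > x$; thus $\bdelta^{l} \leq x < \bdelta^{l+1}$, which is the first assertion.

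For the second assertion, $\bdelta \in A$, so $\bdelta^{l} \in A$, and $\bdelta^{l} \leq x < \bdelta^{l}\bdelta$; by the uniqueness of the element of $A$ witnessing condition (4) of \cref{defn-t} — recorded just after the definition of $\lambda$ — we conclude $\lambda(x) = \bdelta^{l}$. As indicated, the only step requiring care is the passage from the orientation axioms to the estimate $\bdelta > 1 + c$, which prevents $\bdelta$ from being infinitely close to $1$ and thereby makes $\{\bdelta^{l} : l \in \mbb{Z}\}$ unbounded above; everything else is elementary.
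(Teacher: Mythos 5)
Your approach is essentially the same as the paper's: both use the orientation axioms to transfer the fact that $\bdelta^{\mbb{Z}}$ is cofinal in $\mbb{N}$ from $\mbb{R}$ to $K$, and then read off the exponent $l$. The paper transfers, one $n$ at a time, the sentences asserting that some $\bdelta^{l_n}$ exceeds $n$; you instead extract a single ordering axiom giving $\bdelta > 1 + c$ for a rational $c > 0$ and run Bernoulli's inequality, which is a slightly more self-contained and explicit way to carry out the same transfer. Your extraction of $l$ as $\max S$ is clean.

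One step deserves a flag. The assertion that when $x < 1$ the element $1/x$ is also finite does not follow from $x$ being finite in the paper's sense: a positive infinitesimal $x$ is finite, yet $1/x$ is infinite and your set $S$ is then empty. In fact the lemma as literally stated fails for infinitesimal $x$ — by your own estimate every $\bdelta^{l}$ is non-infinitesimal, so from $\lambda(x) \leq x$ one gets $\lambda(x) \in A \setminus \Delta$. This is not a defect you introduced: the paper's proof has the same gap in its case $0 < y \leq 1$, which tacitly assumes $1/y$ is finite, and every later application of the lemma is to elements $x$ with $v(x) = 0$. So your argument is correct under the same implicit restriction as the paper's; to make the statement airtight one should read ``finite'' as ``$1/n < x < n$ for some $n$,'' under which hypothesis your proof closes with no gap.
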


\begin{proof}
First note that $\bdelta^{\mbb{Z}}$ is cofinal in the set of finite elements of $K$. This is because $\bdelta^{\mbb{Z}}$ is cofinal in $\mbb{N} \subseteq \mbb{R}$, and $(K,(\gamma\delta)_{\gamma \in \Gamma,\delta \in \Delta})$ satisfies the orientation axioms for $\GD$ by assumption. If $y \in K^{>0}$ is finite and $y > 1$, let $l$ be the smallest natural number such that $y < \bdelta^{l+1}$. If $0 < y \leq 1$, let $m$ be the smallest natural number such that $y < \bdelta^{-m+1}$, and then take $l = -m$.
\end{proof}

\begin{lem}\label{lambda-closure-lem3}
Let $K^{\p} = \mbb{Q}(\RGD)^{\rc}$. Then $\lambda((K^{\p})^{>0}) = \Delta$.
\end{lem}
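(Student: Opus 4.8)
The plan is to prove the two inclusions of $\lambda((K^{\p})^{>0}) = \Delta$ separately, with essentially all of the work going into $\lambda((K^{\p})^{>0}) \subseteq \Delta$.

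The inclusion $\Delta \subseteq \lambda((K^{\p})^{>0})$ is immediate. Given $\delta \in \Delta$, write $\gamma_0 = (1,0)$ for the identity of $\Gamma$; then $\delta = \Re(\gamma_0\delta) \in \Re(\GD) \subseteq K^{\p}$, and since $\delta \in \Delta \subseteq A$ satisfies $\delta \le \delta < \delta\bdelta$ (because $\bdelta > 1$), uniqueness of the witness in the definition of $\lambda$ gives $\lambda(\delta) = \delta$.

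For the reverse inclusion, the key point I would establish is that $K^{\p}$ is Archimedean, i.e.\ that every element of $K^{\p}$ is bounded above by a natural number. Granting this, any $x \in (K^{\p})^{>0}$ is finite, and \cref{lambda-closure-lem1} then yields $\lambda(x) = \bdelta^{l} \in \Delta$ for some $l \in \mbb{Z}$, as desired. To prove $K^{\p}$ Archimedean I would take an arbitrary $x \in K^{\p}$ and note that it lies in $\mbb{Q}(c_1,\ldots,c_n)^{\rc}$ for finitely many $c_i = \Re(\gamma_i\delta_i)$ with $\gamma_i \in \Gamma$, $\delta_i \in \Delta$. Two observations then finish it. First, since $\gamma_i \in G \subseteq \mbb{S}^{1}(K)$ we have $\Re(\gamma_i)^2 \le 1$, hence $\av{\Re(\gamma_i)} \le 1$, so — using that $\delta_i$ is finite — each $c_i = \Re(\gamma_i)\delta_i$ is finite. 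Second, letting $r_i$ denote the real number $\Re(\gamma_i)\delta_i$, the orientation axioms for $\GD$ say precisely that for every $P \in \mbb{Z}[x_1,\ldots,x_n]$ the sign of $P(c_1,\ldots,c_n)$ in $K$ matches the sign of $P(r_1,\ldots,r_n)$ in $\mbb{R}$. Hence the assignment $c_i \mapsto r_i$ is a well-defined isomorphism of ordered fields $\mbb{Q}(c_1,\ldots,c_n) \to \mbb{Q}(r_1,\ldots,r_n) \subseteq \mbb{R}$ (equality, order, and the ring operations in these fields all reduce to signs of integer polynomials in the generators); extending it along the real closures via uniqueness of the real closure of an ordered field produces an ordered-field embedding $\mbb{Q}(c_1,\ldots,c_n)^{\rc} \hookrightarrow \mbb{R}$, so $x$ is bounded by a natural number. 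As $x$ was arbitrary, $K^{\p}$ is Archimedean.

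The only step carrying real content is the transfer through the orientation axioms, and there is one subtlety I would be careful about: to conclude $P(c_1,\ldots,c_n) \ne 0$ whenever $P(r_1,\ldots,r_n) \ne 0$ (and to obtain the quantitative bounds needed to see that elements of $\mbb{Q}(c_1,\ldots,c_n)$ are finite), one uses that a nonzero real $r = P(r_1,\ldots,r_n)$ satisfies $\av{r} > 1/M$ for some $M \in \mbb{N}$, so that $MP - 1$ or $-MP - 1$ — still an integer polynomial, hence governed by an orientation axiom — is positive at $(r_1,\ldots,r_n)$ and therefore also at $(c_1,\ldots,c_n)$. Everything else (the finiteness estimate for $P(c_1,\ldots,c_n)$ in terms of the $\delta_i$, well-definedness of the ordered-field map, the passage to real closures, and the appeal to \cref{lambda-closure-lem1}) is routine.
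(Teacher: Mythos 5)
Your proof is correct and follows essentially the same approach as the paper: the paper's one-line argument asserts that the orientation axioms force every positive element of $K^{\p}$ to be finite and then invokes \cref{lambda-closure-lem1}, which is precisely the structure of your argument. Your ordered-field embedding of $\mbb{Q}(c_1,\ldots,c_n)^{\rc}$ into $\mbb{R}$ and the explicit check that $\Delta \subseteq \lambda((K^{\p})^{>0})$ just spell out details the paper leaves implicit.
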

\begin{proof}
It follows from the orientation axioms that every positive element of $K^{\p}$ is finite. By \cref{lambda-closure-lem1}, $\lambda((K^{\p})^{>0}) = \Delta$.
\end{proof}

\begin{lem}\label{lambda-closure-lem2} $\Delta$ is a pure subgroup of $A$.
\end{lem}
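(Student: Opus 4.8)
The plan is to show that $\Delta$ is pure in $A$, i.e., that for every $m \geq 1$, if $\delta \in \Delta$ has an $m$th root in $A$, then it already has an $m$th root in $\Delta$. Recall $\Delta = \bdelta^{\mbb{Z}}$, so write $\delta = \bdelta^k$ for some $k \in \mbb{Z}$; we must show that if $\bdelta^k = a^m$ for some $a \in A$, then $m \mid k$, since $\bdelta^{k/m} \in \Delta$ will then be the desired root. So suppose $a \in A$ satisfies $a^m = \bdelta^k$.

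First I would invoke the structure of $A$ coming from \cref{defn-t}: $A$ is a multiplicative subgroup of the real closed field $K$ with $\bdelta$ as its smallest element larger than $1$, so $A$ is regularly discrete, and by \cref{reg-discrete-lem} we have $A/A^{[m]} = \{\bdelta A^{[m]}, \bdelta^2 A^{[m]}, \ldots, \bdelta^m A^{[m]}\}$, a set of exactly $m$ cosets. Now write $k = qm + r$ with $0 \le r < m$ (working with $|k|$ and signs as needed, or just noting $r$ is the residue of $k$ mod $m$). Then $\bdelta^k = \bdelta^{qm}\bdelta^r = (\bdelta^q)^m \bdelta^r$, so in $A/A^{[m]}$ we have $a^m A^{[m]} = A^{[m]}$ on the one hand (as $a^m$ is an $m$th power) and $\bdelta^k A^{[m]} = \bdelta^r A^{[m]}$ on the other. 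Hence $\bdelta^r \in A^{[m]}$.

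The key step is then to deduce $r = 0$. If $1 \le r < m$, then $\bdelta^r \in A^{[m]}$ means $\bdelta^r = c^m$ for some $c \in A$ with $c > 1$ (since $\bdelta^r > 1$); but then $1 < c < \bdelta^r \le \bdelta^{m-1} < \bdelta^r \bdelta = \bdelta^{r+1}$, and more to the point $1 < c$ with $c \in A$ forces $c \ge \bdelta$ because $\bdelta$ is the smallest element of $A$ larger than $1$; yet $c^m = \bdelta^r$ with $r < m$ gives $c^m < \bdelta^m$, so $c < \bdelta$, a contradiction. Hence $r = 0$, so $m \mid k$, $\bdelta^{k/m} \in \Delta$, and $(\bdelta^{k/m})^m = \delta$. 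This shows $\Delta \cap A^{[m]} = \Delta^{[m]}$ for all $m$, i.e., $\Delta$ is pure in $A$.

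I expect the only mildly delicate point to be bookkeeping with signs of exponents (handling $k < 0$ and the direction of the inequalities), which is routine; the substantive input is entirely the regular discreteness of $A$ via \cref{reg-discrete-lem} together with $\bdelta$ being the least element of $A$ above $1$. An alternative, even shorter phrasing: regular discreteness gives $[m]A = m = [m]\Delta$ for all $m$, and a subgroup $\Delta$ of $A$ with $[m]\Delta = [m]A$ finite for all $m$ and with $A/\Delta$ torsion-free (here $A/\Delta$ embeds in the divisible hull quotient, or one argues directly) is pure; but the direct coset computation above avoids needing any such general lemma and is cleanest to write out.
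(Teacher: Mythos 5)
Your argument is correct, and it takes a genuinely different route from the paper's. The paper first observes that any $a \in A$ with $a^n \in \Delta$ must be finite, then invokes \cref{lambda-closure-lem1} (which ultimately rests on the orientation axioms, since one needs $\bdelta^{\mbb{Z}}$ to be cofinal in the finite elements of $K$) to trap $a$ strictly between two consecutive integer powers of $\bdelta$ and derive a contradiction with $\bdelta$ being the least element of $A$ above $1$. Your proof bypasses $\lambda$, \cref{lambda-closure-lem1}, and the orientation axioms entirely: elementary coset arithmetic reduces the claim to showing that $\bdelta^r \in A^{[m]}$ with $0 \le r < m$ forces $r = 0$, and that follows from the minimality of $\bdelta$ alone, via the incompatible inequalities $c \ge \bdelta$ and $c^m = \bdelta^r < \bdelta^m$. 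This is cleaner and more self-contained; the paper's version is naturally embedded in the chain of lemmas developing $\lambda$, which is where its dependency comes from. One small note: the invocation of \cref{reg-discrete-lem} in your proposal is superfluous — you never actually use the explicit list of coset representatives, only the elementary observation that $\bdelta^k \in A^{[m]}$ and $\bdelta^{qm} \in A^{[m]}$ imply $\bdelta^r \in A^{[m]}$, which is ordinary abelian-group arithmetic. The substantive input is exclusively the fact that $\bdelta$ is the least element of $A$ greater than $1$.
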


\begin{proof}
Let $a \in A$ be such that $a^n \in \Delta$ for some $n > 0$. Since $a^n \in \Delta$, there is $l \in \mbb{Z}$ such that $a^n = \bdelta^l$. If $a \geq 1$, then since $A \subseteq K^{>0}$, we have $0 < a \leq a^n = \bdelta^l$. In particular, $a$ is finite. If $0 < a < 1$, then clearly $a$ is finite.

Suppose for a contradiction that $a \notin \Delta$. Then since $a$ is finite, by \cref{lambda-closure-lem1}, there is $k \in \mbb{Z}$ such that $\bdelta^k < a < \bdelta^{k+1}$. But then $1 < a\bdelta^{-k} < \bdelta$, contradicting our assumption that $\bdelta$ is the smallest element of $A$ larger than 1. So we must have $a \in \Delta$.
\end{proof}

We will use the following lemma repeatedly in our arguments.

\begin{lem}[Fundamental Lemma]\label{fundamental-lemma} Let $K^{\p}$ be a subfield of $K$ which contains $\Delta$ and is closed under $\lambda$. Let $a \in K$. Then either $\lambda(K^{\p}(a)^{\rc}) = \lambda(K^{\p})$ or there is an $\lor(K^{\p})$-definable function $f$ such that $\lambda((K^{\p}(a)^{\rc})^{>0}) = \lambda((K^{\p})^{>0})\ip{\lambda(f(a))}_A$. If $a \in A$, then we may take $f = \id$ in the second case.\end{lem}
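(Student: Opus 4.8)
The plan is to reduce the statement to the valuation inequality. Since $K^{\p}$ is closed under $\lambda$ and contains $\Delta = \bdelta^{\mbb{Z}}$, the value group $v(K^{\p})$ — more precisely $v((K^{\p})^{\times})$ — contains $v(\Delta) = v(\bdelta)\mbb{Z}$, and the restriction of $\lambda$ records exactly the $v(\bdelta)$-translate in which a finite positive element of $K^{\p}$ lives; the infinite elements contribute the rest of the value group. So understanding $\lambda(K^{\p}(a)^{\rc})$ is the same as understanding $v(K^{\p}(a)^{\rc})$ relative to $v(K^{\p})$. By the valuation inequality (applied with the real closure of $K^{\p}$ in place of $K^{\p}$, which changes nothing since $\lambda$ and $v$ are insensitive to passing to the real closure), $\dim_{\mbb{Q}}\bigl(v((K^{\p})^{\rc}(a)^{\rc}) / v((K^{\p})^{\rc})\bigr) \le 1$. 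Thus there are exactly two cases: either the value group does not grow, giving $\lambda(K^{\p}(a)^{\rc}) = \lambda(K^{\p})$, or it grows by a single $\mbb{Q}$-dimension.

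In the second case, first I would fix an element $c \in (K^{\p}(a)^{\rc})^{\times}$ whose value $v(c)$, together with $v((K^{\p})^{\times})$, spans the new value group over $\mbb{Q}$. I can take $c$ to be finite and positive (multiply by a suitable element of $\Delta \subseteq K^{\p}$ to pull its value into the relevant range, and replace $c$ by $|c|$), and I may arrange $1 \le c < \bdelta$ is \emph{not} forced — rather, $\lambda(c) = 1$ after this normalization, but its value $v(c)$ is still nonzero modulo the old value group, which is the point: the new piece of $\lambda(K^{\p}(a)^{\rc})$ is generated over $\lambda(K^{\p})$ by $\lambda$-values of elements of $(K^{\p}(a)^{\rc})^{>0}$ whose $v$ lies outside $v((K^{\p})^{\times}) \otimes \mbb{Q}$. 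Concretely I would show $\lambda((K^{\p}(a)^{\rc})^{>0}) = \lambda((K^{\p})^{>0})\ip{\lambda(c)}_A$: the inclusion $\supseteq$ is immediate since $c \in (K^{\p}(a)^{\rc})^{>0}$; for $\subseteq$, given any finite positive $d \in K^{\p}(a)^{\rc}$, write $v(d) = q\, v(c) + w$ with $q \in \mbb{Q}$ and $w \in v((K^{\p})^{\times})$, clear the denominator of $q$ using that $A$ is regularly discrete (so the relevant congruence class is hit by an $n$th power after multiplying by a bounded power of $\bdelta$), and conclude $\lambda(d) \in \lambda((K^{\p})^{>0})\ip{\lambda(c)}_A$ by \cref{lambda-closure-lem1} and \cref{lambda-field-ops}. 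The element $c$ lies in $K^{\p}(a)^{\rc}$, so it is an $\lor(K^{\p})$-definable (in fact $\lor(K^{\p})$-algebraic) function of $a$; choosing such a function $f$ with $f(a) = c$ gives the desired $f$. If $a \in A$, then $a$ itself is finite and positive with $v(a)$ already generating the extension of the value group (since $a \in A \subseteq K^{>0}$ and $a \notin K^{\p}$ can only increase the value group through $v(a)$ itself — any value in $K^{\p}(a)^{\rc}$ outside $v(K^{\p})$ is $\mbb{Q}$-proportional to $v(a)$), so I can take $c = a$, i.e. $f = \id$.

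I expect the main obstacle to be the second case: making rigorous that a single generator $c$ of the new $\mbb{Q}$-line in the value group actually produces, via $\lambda$, a generator of $\lambda(K^{\p}(a)^{\rc})$ over $\lambda(K^{\p})$ as a \emph{subgroup of $A$} in the sense of the $\ip{\cdot}_A$ operation — this is where the regular discreteness of $A$ (equivalently $[n]A = n$ for all $n$, via \cref{reg-discrete-lem}) is needed to pass from $\mbb{Q}$-linear relations among values to actual $n$th-root relations realized inside $A$. A secondary technical point is confirming that $\lambda$ is unchanged under real closure, i.e. that $\lambda((K^{\p})^{>0}) = \lambda(((K^{\p})^{\rc})^{>0})$ when $K^{\p}$ is closed under $\lambda$ and contains $\Delta$; this follows because $v((K^{\p})^{\times})$ and $v(((K^{\p})^{\rc})^{\times})$ span the same $\mbb{Q}$-vector space and $\lambda$ only sees the $v(\bdelta)\mbb{Z}$-coset structure on finite elements, together with \cref{lambda-closure-lem1}. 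The rest is bookkeeping with \cref{lambda-field-ops} and the definition of $B\ip{S}_A$.
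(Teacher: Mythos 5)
Your proposal follows essentially the same route as the paper: split on whether $v(K^{\p}(a)^{\rc})$ properly extends $v(K^{\p})$, invoke the valuation inequality to see the extension is at most one $\mbb{Q}$-dimension, write $v(d)=q\,v(c)+w$ with $w\in v((K^{\p})^{\times})$, clear denominators, and use \cref{lambda-closure-lem1} and \cref{lambda-field-ops} to land in $\lambda((K^{\p})^{>0})\ip{\lambda(c)}_A$. Two points need repair, though neither is fatal. First, the parenthetical normalization of $c$ is impossible: every element of $\Delta=\bdelta^{\mbb{Z}}$ has valuation $0$ (powers of a finite, non-infinitesimal $\bdelta$), so multiplying by $\Delta$ never moves $v(c)$, and an element with $v(c)\notin v(K^{\p})\ni 0$ can never satisfy $\lambda(c)=1$. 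Fortunately no normalization is needed --- $\lambda$ is defined on all of $K^{>0}$ and the paper applies it to the raw witness $z=f(a)$ --- so you should simply delete that step. Second, in the $a\in A$ case your claim that any growth of the value group must be $\mbb{Q}$-proportional to $v(a)$ does not follow from the valuation inequality alone when $v(a)\in v(K^{\p})$; you need the separate observation (which the paper proves) that $a\in A$ together with $v(a)\in v(K^{\p})$ forces $a\in K^{\p}$, using closure of $K^{\p}$ under $\lambda$ and \cref{lambda-closure-lem1}, so that this subcase collapses into the first alternative. With those corrections the argument matches the paper's.
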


\begin{proof}
Note that since $K^{\p}$ is closed under $\lambda$, we have $\lambda((K^{\p})^{>0}) = K^{\p} \cap A$. Let $A^{\p} = K^{\p} \cap A$. We will consider two cases.

\begin{case-fundamental-lemma}\label{case1-fundamental-lemma} For all $x \in (K^{\p}(a)^{\rc})^{\times}$, $v(x) \in v(K^{\p})$. \end{case-fundamental-lemma}

We will show that $\lambda(K^{\p}(a)^{\rc}) = \lambda(K^{\p})$. In this case, let $x \in (K^{\p}(a)^{\rc})^{>0}$. By assumption, there is $y \in K^{\p}$ such that $v(x) = v(y)$. Note that by definition of $\lambda$, for any $k \in K^{>0}$, $\lambda(k) \leq k < \lambda(k)\bdelta$; therefore, $v(x) = v(\lambda(x))$ and $v(y) = v(\lambda(y))$. Since we assume that $v(x) = v(y)$, $\frac{\lambda(x)}{\lambda(y)}$ is finite. Therefore, we must actually have $\frac{\lambda(x)}{\lambda(y)} = \bdelta^l$ for some $l \in \mbb{Z}$ by \cref{lambda-closure-lem1}. Since $\lambda(x), \lambda(y) \in A$, $\lambda(\lambda(x)\lambda(y)^{-1}) = \lambda(x)\lambda(y)^{-1}$. By our assumption that $K^{\p}$ is closed under $\lambda$ and contains $\Delta$, we have $\lambda(y)\bdelta^l \in A^{\p}$. Therefore, $\lambda(x) \in A^{\p}$. This proves that $\lambda(K^{\p}(a)^{\rc})) \subseteq \lambda(K^{\p})$. The other inclusion is clear, so in this case, we have $\lambda(K^{\p}(a)^{\rc}) = \lambda(K^{\p})$.

Note that we have actually proved that if $v((K^{\p}(a)^{\rc})^{\times}) = v((K^{\p})^{\times})$, then $\lambda(K^{\p}(a)^{\rc}) = \lambda(K^{\p})$.

\begin{case-fundamental-lemma}\label{case2-fundamental-lemma} There is $z \in K^{\p}(a)^{\rc}$ such that $v(z) \notin v(K^{\p})$. \end{case-fundamental-lemma}

We want to show that $\lambda((K^{\p})^{>0})\ip{\lambda(f(a))}_A = \lambda((K^{\p}(a)^{\rc})^{>0})$. Since $z \in K^{\p}(a)^{\rc}$, there is an $\lor(K^{\p})$-definable function $f$ such that $z = f(a)$. Let $x \in (K^{\p}(a)^{\rc})^{>0}$. By the valuation inequality, $v(x) = v(k)+q(v(z))$ for some $k \in K^{\p}$, $q \in \mbb{Q}$. So there is $N \in \mbb{N}$ such that $\frac{1}{N} < \frac{x}{kz^q} < N$. Since $v(k) = v(\lambda(k))$ and since $K^{\p}$ is closed under $\lambda$, we have $\frac{1}{N} < \frac{x}{\lambda(k)z^q} < N$. Let $q = p_1/p_2$, where $p_2 > 0$ and $p_1,p_2 \in \mbb{Z}$. Then $\frac{1}{N^{p_2}} < \frac{x^{p_2}}{\lambda(k)^{p_2}z^{p_1}} < N^{p_2}$, so there is $M \in \mbb{N}$ such that $\frac{1}{M} < \frac{\lambda(x)^{p_2}}{\lambda(k)^{p_2}\lambda(z)^{p_1}} < M$. Since $\frac{\lambda(x)^{p_2}}{\lambda(k)^{p_2}\lambda(z)^{p_1}} \in A$ and this element is finite, there is $l \in \mbb{Z}$ such that $\frac{\lambda(x)^{p_2}}{\lambda(k)^{p_2}\lambda(z)^{p_1}} = \bdelta^l$. By definition of $A^{\p}\ip{\lambda(z)}_A$, we have $\lambda(x) \in A^{\p}\ip{\lambda(z)}_A$. Therefore, $\lambda(x) \in A^{\p}\ip{\lambda(f(a))}_A = \lambda((K^{\p})^{>0})\ip{\lambda(f(a))}_A$.

Now let $x \in \lambda((K^{\p})^{>0})\ip{\lambda(f(a))}_A$. We want to show that $x \in \lambda((K^{\p}(a)^{\rc})^{>0})$. By definition of $\lambda((K^{\p})^{>0})\ip{\lambda(f(a))}_A$ and our assumption that $\lambda((K^{\p})^{>0}) = A^{\p}$, there are $a^{\p} \in A^{\p}$, $d > 0$, and $l \in \mbb{Z}$ such that $x = (a^{\p}\lambda(f(a))^l)^{1/d}$. Since $v(f(a)) = v(\lambda(f(a)))$, we have $v(f(a)^l) = v(\lambda(f(a))^l)$. Therefore, there is $N_1 \in \mbb{N}$ such that $\frac{1}{N_1} < \frac{(a^{\p}f(a)^l)^{1/d}}{(a^{\p}\lambda(f(a))^l)^{1/d}} < N_1$. That is, $\frac{1}{N_1} < \frac{(a^{\p}f(a)^l)^{1/d}}{x} < N_1$. Moreover, $v((a^{\p} f(a)^l)^{1/d}) = v(\lambda((a^{\p}f(a)^l)^{1/d}))$, so there is $N_2 \in \mbb{N}$ such that $\frac{1}{N_2} < \frac{\lambda((a^{\p}f(a)^l)^{1/d})}{x} < N_2$. By \cref{lambda-closure-lem1}, there is $j \in \mbb{Z}$ such that $x = \bdelta^j \lambda((a^{\p}f(a)^l)^{1/d})$. Since $\lambda(\bdelta^j) = \bdelta^j$, by \cref{lambda-field-ops}, we have $x \in \lambda((K^{\p}(a)^{\rc})^{>0})$. Therefore, in this case, we have $\lambda((K^{\p})^{>0})\ip{\lambda(f(a))}_A = \lambda((K^{\p}(a)^{\rc})^{>0})$.\\

We now prove the last statement in the lemma. Suppose $a \in A$. We have two cases: $v(a) \in v(K^{\p})$ and $v(a) \notin v(K^{\p})$. In the first case, there is $k \in K^{\p}$ such that $v(k) = v(a)$. We may assume that $k > 0$ in $K^{\p}$. Since $v(k) = v(a)$ and $v(k) = v(\lambda(k))$, there is $n \in \mbb{N}$ such that $\frac{1}{n} < \frac{a}{\lambda(k)} < n$. Since $\frac{a}{\lambda(k)}$ is finite, there is $l \in \mbb{Z}$ such that $\frac{a}{\lambda(k)} = \bdelta^l$. So $a = \bdelta^l\lambda(k)$, and since $K^{\p}$ is closed under $\lambda$, $a \in K^{\p}$. In this case, we have $K^{\p}(a)^{\rc} = K^{\p}$, so $\lambda(K^{\p}(a)^{\rc}) = \lambda(K^{\p})$. In the second case, let $x \in (K^{\p}(a)^{\rc})^{>0}$. Again using the valuation inequality, there are $k \in K^{\p}$, $N \in \mbb{N}$, and $q \in \mbb{Q}$ such that $\frac{1}{N} < \frac{x}{ka^q} < N$. By assumption, $a \in A$. Therefore, a similar calculation as in \cref{case2-fundamental-lemma} above shows that $\lambda(x) \in A^{\p}\ip{a}_A$. Since $a \in A$, $\lambda(a) = a$, so we have $\lambda((K^{\p}(a)^{\rc})^{>0}) \subseteq \lambda((K^{\p})^{>0})\ip{\lambda(a)}_A$. The other inclusion is similar to the second part of \cref{case2-fundamental-lemma} above.
\end{proof}

We will also use the following lemma to prove density of certain subsets of $K$ when $K$ is equipped with the order topology.

\begin{lem}\label{density-lemma}
Let $\mc{M} \models T$. Let $f_1,\ldots,f_n: (GA)^m \to K$ be functions ($n,m \geq 1$) which are definable in the language $\lor(K)$. Then $K \setminus \bigcup_{j=1}^n f_j((GA)^m)$ is dense in $K$.
\end{lem}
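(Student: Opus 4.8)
The plan is to reduce to a cardinality/dimension argument via the Archimedean valuation $v$, using the Fundamental Lemma (\cref{fundamental-lemma}) to control how much the value group can grow. First I would observe that it suffices to show $\bigcup_{j=1}^n f_j((GA)^m) \neq K$ on every nonempty open interval $(c,d) \subseteq K$; equivalently, after composing each $f_j$ with an $\lor(K)$-definable bijection from $(c,d)$ onto $K$ (such as $t \mapsto $ a suitable rational function), it suffices to find a single point of $K$ missed by all the $f_j$. So fix the interval and reduce to finding one element of $K$ not in $\bigcup_j f_j((GA)^m)$.

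Next I would set up the valuation-theoretic counting. Let $K^{\p}_0 = \mbb{Q}(\RGD)^{\rc}$, which contains $\Delta$ and, by \cref{lambda-closure-lem3}, is closed under $\lambda$ with $\lambda((K^{\p}_0)^{>0}) = \Delta$. Now I want to bound the ``valuation-theoretic size'' of $\bigcup_j f_j((GA)^m)$. The key point is that each $f_j((GA)^m)$ is contained in the set of $\lor$-definable functions (over parameters in $K^{\p}_0$ together with the finitely many parameters used to define the $f_j$) evaluated at tuples from $GA$. Any single value $f_j(g_1a_1,\ldots,g_ma_m)$ lies in a real closed field of the form $K^{\p}_0(\bar p)(g_1a_1,\ldots,g_ma_m)^{\rc}$; by iterating the Fundamental Lemma (together with the valuation inequality), adjoining each $g_\ell a_\ell$ — or rather its real and imaginary parts — enlarges the $\mbb{Q}$-vector space $v(K^{\p} {}^{\times})$ by dimension at most $2m$ in total, so $v$ of this field lands in a fixed finitely-generated (over $v(K^{\p}_0{}^{\times})$) $\mbb{Q}$-subspace $W$ of $v(K^{\times})$ once we also throw in $v$ of the coordinates of the elements of $G$ that appear — but here is the subtlety: elements of $G \subseteq \mbb{S}^1(K)$ need not be finite, so $v$ of their coordinates can be arbitrary. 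I would handle this by instead working with the group $GA$ directly and noting that the relevant growth is controlled: more precisely, I expect the cleanest route is to show that $v\big(\bigcup_j f_j((GA)^m) \setminus \{0\}\big)$ omits some element, by a counting/saturation argument against $v(K^{\times})$, since $K$ (and hence $v(K^{\times})$) can be taken sufficiently saturated — but \cref{density-lemma} must hold in \emph{every} model of $T$, so saturation is not available and one instead argues directly.

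The honest approach, then, is the following. Replace the ambient interval by $K$ as above. For a contradiction suppose $K = \bigcup_{j=1}^n f_j((GA)^m)$. Pick any $a \in K$ with $v(a) \notin v(K^{\p}_1{}^{\times})$ where $K^{\p}_1$ is the real closure of $K^{\p}_0$ together with the parameters defining the $f_j$; such $a$ need not exist if $v(K^{\times}) = v(K^{\p}_1{}^{\times})$, so first dispose of that degenerate case separately (there $K^{\p}_1$ and $K$ have the same value group, and one argues $\lambda(K) = \Delta$ is countable while... — no, $K$ itself may be uncountable). The real mechanism, and the \textbf{main obstacle}, is precisely this: one must show $\bigcup_j f_j((GA)^m)$ is ``small'' in a sense robust enough to work in non-saturated models. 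I would therefore argue that each $f_j((GA)^m)$, intersected with the finite part of $K$, has image under $\lambda$ contained in $\lambda((K^{\p}_1)^{>0})\ip{\lambda(f_{j}(\bar u))}_A$-type sets which, by repeated application of the Fundamental Lemma, are generated over $\Delta$ by at most $2nm$ elements, hence (by \cref{reg-discrete-lem} and purity of $\Delta$ in $A$, \cref{lambda-closure-lem2}) the union of the $\lambda$-images is a proper subset of $A$; then for any $a^{\ast} \in A$ not in this union, the element $a^{\ast}$ cannot be finite-part-equal to any $f_j$-value whose $\lambda$ differs, and a short elementary argument upgrades ``$\lambda$ is missed'' to ``an actual interval is missed'', giving density. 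I expect closing this gap — making the ``at most $2nm$ generators'' bound precise and uniform across the model — to be the crux, and it is exactly where the Fundamental Lemma does the heavy lifting.
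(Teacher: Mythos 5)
Your proposal does not close, and you say as much yourself; but beyond being incomplete, the strategy is unworkable. The central claim --- that the union of the $\lambda$-images of the sets $f_j((GA)^m)$ is a proper subset of $A$, generated over $\Delta$ by at most $2nm$ elements --- is false. The Fundamental Lemma and the valuation inequality bound the growth of the value group when you adjoin a \emph{single fixed} tuple from $(GA)^m$; the set $f_j((GA)^m)$ is the union over \emph{all} such tuples, each contributing its own generators, so there is no uniform finite generating set. Concretely, take $m=1$ and $f_1$ the restriction of the coordinate projection $(x,y)\mapsto x$: since $A\subseteq GA$ (as elements $(a,0)$), we get $f_1(GA)\supseteq A$, so $\lambda(f_1(GA))=A$ and $v(f_1(GA))=v(K^{\times})$ (recall $\lambda(K^{>0})=A$ by axiom (4) of $T$). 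Thus no element of $K^{>0}$ has its valuation, or its $\lambda$-value, missed by the union, and any argument that tries to locate the missed points by valuation-theoretic or $\lambda$-theoretic data must fail. The density has to be detected \emph{inside} a single fiber $[a,a\bdelta)$ of $\lambda$, which the Archimedean valuation cannot see.

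The mechanism you never invoke is the one the paper actually uses, and it is where the hypothesis $\mc{M}\models T$ earns its keep: the Mann axioms imply $GA$ has the Mann property, which by Proposition 1.1 of van den Dries--G\"unayd\i n \cite{LouAyhan} and Propositions 2.8--2.9 of \cite{AyhanPhilipp} implies $GA$ is \emph{small} in $K$; Corollary 2.10 of \cite{AyhanPhilipp} then gives exactly the statement of \cref{density-lemma}. (Smallness is precisely the robust, non-saturation-dependent notion of ``the image of every semialgebraic function on $(GA)^m$ has dense complement'' that you were searching for.) If you want a self-contained argument rather than a citation, the work to be done is in proving that the Mann property implies smallness --- a nondegenerate-solutions counting argument --- not in the valuation theory.
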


\begin{proof}
This follows directly from Corollary 2.10 in \cite{AyhanPhilipp} once we show that $GA$ is small in $K$. Since $\mc{M} \models T$, $GA$ has the Mann property. Therefore, by Proposition 1.1 in van den Dries and G\"unayd\i n \cite{LouAyhan} and Proposition 2.9 in \cite{AyhanPhilipp}, $GA$ is small in $K(i)$. By Lemma 2.8 in \cite{AyhanPhilipp}, $GA$ is small in $K$.
\end{proof}

\subsection{Substructures of models of $T$}

Let $\mc{M} := (K,G,A,(\gamma)_{\gamma \in \Gamma},(\delta)_{\delta \in \Delta})$ be a model of $T$. Let $\kappa$ be an infinite cardinal such that $\kappa > \av{\GD}$.

Note that we can consider $A$ as a subgroup of $K(i)^{\times}$ by identifying the element $a$ of $A$ with the element $a+0i$ of $K(i)$. Letting 1 denote the identity of $G$, we have $G \cap A = \{1\}$ when $A$ is considered as a subgroup of $K(i)^{\times}$.

\begin{defn}\label{defn-sub-kga} Let $\Sub(K,G,A)$ be the collection of $\lor(P,V)$-structures $(K^{\prime},G^{\prime},A^{\prime})$ such that:
\begin{enumerate}
\item $K^{\prime}$ is a real closed subfield of $K$ of cardinality less than $\kappa$
\item $G^{\prime}$ is a pure subgroup of $G$ containing $\Gamma$
\item $A^{\prime}$ is a pure subgroup of $A$ containing $\Delta$
\item $K^{\prime}(i)$ and $\mbb{Q}(GA)$ are free over $\mbb{Q}(G^{\prime}A^{\prime})$
\item For all $k \in (K^{\prime})^{>0}$, there is $a \in A^{\prime}$ such that $a \leq k < a \bdelta$.
\end{enumerate}
\end{defn}

Note that in particular, we require that $G^{\p} \subseteq K^{\p}(i)$ and $A^{\p} \subseteq K^{\p}$.

\begin{lem}\label{lem-sub-kga}
If $(K^{\prime},G^{\prime},A^{\prime})$ satisfies conditions (1)-(4) in \cref{defn-sub-kga}, then $(K^{\prime},G^{\prime},A^{\prime})$ is indeed a substructure of $(K,G,A)$. Moreover, if $(K^{\prime},G^{\prime},A^{\prime})$ satisfies conditions (1)-(4) in \cref{defn-sub-kga}, then satisfying condition (5) is equivalent to $K^{\prime}$ being closed under $\lambda$.
\end{lem}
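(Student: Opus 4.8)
The plan is to deduce the first assertion from the single group identity $GA \cap K'(i)^{\times} = G'A'$, and then to read the second assertion off from its special case $A \cap K' = A'$.

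\textbf{Reducing the first assertion.} By condition~(1), $K'$ is a real closed subfield of $K$, so its universe is closed under the ring operations and contains $0$ and $1$; thus the only thing to check for "$(K',G',A')$ is a substructure of $(K,G,A)$" is that the predicates restrict correctly, i.e.\ that $G' = G \cap (K')^2$ and $A' = A \cap K'$. The inclusions $G' \subseteq G \cap (K')^2$ and $A' \subseteq A \cap K'$ are immediate from the standing requirements $G' \subseteq K'(i)$ and $A' \subseteq K'$. For the converses, write each element of $GA$ uniquely as $ga$ with $g \in G \subseteq \mbb{S}^1(K)$ and $a \in A \subseteq K^{>0}$ (possible since $G \cap A = \{1\}$ in $K(i)^{\times}$), and observe that for $h \in K'(i)^{\times}$ the modulus $|h| = \sqrt{\Re(h)^2 + \Im(h)^2}$ lies in $K'$ because $K'$ is real closed. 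Hence if $h = ga \in GA \cap K'(i)^{\times}$, then $a = |h| \in A \cap K'$ and $g = h a^{-1} \in G \cap (K')^2$; so $GA \cap K'(i)^{\times} = \big(G \cap (K')^2\big)\big(A \cap K'\big)$, and since this product and $G'A'$ are both direct (their two factors intersecting in $G \cap A = \{1\}$), the identity $GA \cap K'(i)^{\times} = G'A'$ is equivalent to the conjunction of $G' = G \cap (K')^2$ and $A' = A \cap K'$.

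\textbf{Proving the identity.} The inclusion $\supseteq$ is clear. For $\subseteq$, I would first verify that $G'A'$ is a pure subgroup of $GA$, using the direct product decompositions $GA = G \times A$ and $G'A' = G' \times A'$, the purity of $G'$ in $G$ and of $A'$ in $A$ (conditions~(2),(3)), the torsion-freeness of $A$, and the equality $G_{\tor} = \Gamma_{\tor}$ together with $\Gamma \subseteq G'$ (from $\mc{M} \models T$ and~(2)). Now let $h \in GA \cap K'(i)^{\times}$. Since $\mbb{Q}(G'A') \subseteq K'(i)$ while $h \in K'(i) \cap \mbb{Q}(GA)$, the singleton $\{h\}$ cannot be algebraically independent over $\mbb{Q}(G'A')$ — otherwise condition~(4) would make it algebraically independent over $\mbb{Q}(GA)$, contradicting $h \in \mbb{Q}(GA)$ — so $h$ is algebraic over $\mbb{Q}(G'A')$. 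The remaining step, upgrading "algebraic over $\mbb{Q}(G'A')$" to "lies in $G'A'$", is where I expect the real work to be: it should follow by combining this with the purity of $G'A'$ in $GA$, the discreteness of $A$, and the Mann property of $GA$ (which holds since $\mc{M} \models T$), in the style of the analysis of small multiplicative subgroups in \cite{Ayhan} — one must rule out that a power of $h$ contributes a multiplicative relation over $G'A'$ not already realized inside $G'A'$. Granting this, $h \in G'A'$, which proves the identity and hence the first assertion.

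\textbf{The second assertion.} Suppose $(K',G',A')$ satisfies~(1)--(4); by the first part, $A \cap K' = A'$. The function $\lambda$ maps $K^{>0}$ into $A$ and sends everything in $K \setminus K^{>0}$ to $0 \in K'$, so $K'$ is closed under $\lambda$ if and only if $\lambda\big((K')^{>0}\big) \subseteq K'$, if and only if $\lambda\big((K')^{>0}\big) \subseteq A \cap K' = A'$. But for $k \in (K')^{>0}$ the value $\lambda(k)$ is, by definition, the unique $a \in A$ with $a \le k < a\bdelta$; hence the statement "$\lambda(k) \in A'$ for every $k \in (K')^{>0}$" is precisely condition~(5). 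Therefore condition~(5) holds if and only if $K'$ is closed under $\lambda$, as required.
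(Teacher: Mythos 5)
Your reduction to a single identity via the modulus trick is a neat observation, and your proof of the second assertion is correct and matches the paper's in substance. But the first part has a genuine gap, and you flag it yourself: you reduce to showing $h \in G'A'$ from ``$h$ is algebraic over $\mbb{Q}(G'A')$,'' and then write that this ``should follow'' from purity, discreteness, and the Mann property ``in the style of'' \cite{Ayhan} --- but you never actually carry out that step. This is precisely the crux of the lemma, and it is not a routine consequence of purity and torsion alone. The paper handles it by invoking Lemma~5.12 of van den Dries and G\"unayd\i n \cite{LouAyhan}: since $(K,GA,(\gamma\delta))$ satisfies the Mann axioms, any element of $GA$ that is algebraic of degree $d$ over $\mbb{Q}(G'A')$ has its $d$th power in $G'A'$. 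That result is where the Mann property does the work, and only after that does one use $G \cap A = \{1\}$ to split off the $G$- and $A$-components, purity of $G'$ in $G$ (respectively $A'$ in $A$) to pull the $d$th root back into $G'$ (respectively $A'$), and finally $G_{\tor} = \Gamma_{\tor}$ with $\Gamma \subseteq G'$ to absorb the torsion ambiguity. Your sketch gestures at these ingredients but does not identify the lemma that turns ``algebraic over $\mbb{Q}(G'A')$'' into ``some power lies in $G'A'$,'' which is the nontrivial number-theoretic input; without it the argument is not complete. (Minor note: what you actually need is purity of $G'$ in $G$ and of $A'$ in $A$ separately, applied to the two factors after the split --- purity of the product $G'A'$ in $GA$ is true but not the form in which it gets used.)
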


\begin{proof}[Proof of \cref{lem-sub-kga}]
Suppose $(K^{\p},G^{\p},A^{\p})$ satisfies conditions (1)-(4). We want to show that $G \cap K^{\p}(i) = G^{\p}$ and $A \cap K^{\p} = A^{\p}$. Since $G^{\p}$ is a subgroup of $G$, it is clear that $G^{\p} \subseteq G \cap K^{\p}(i)$. Now let $g \in K^{\p}(i) \cap G$. Then in particular, $g$ is algebraic over $\mbb{Q}(G)$, so $g$ is algebraic over $\mbb{Q}(GA)$. By condition (4) in \cref{defn-sub-kga}, $K^{\p}(i)$ and $\mbb{Q}(GA)$ are free over $\mbb{Q}(G^{\p}A^{\p})$, so $g$ is algebraic over $\mbb{Q}(G^{\p}A^{\p})$. Let $p(x) \in \mbb{Q}(G^{\p}A^{\p})[x]$ be a polynomial such that $p(g) = 0$ and let $d = \deg(p)$. By assumption, $\mc{M} \models T$, so in particular, $(K,GA,(\gamma\delta)_{\gamma \in \Gamma,\delta \in \Delta})$ satisfies the Mann axioms for $\GD$. Therefore, we may apply Lemma 5.12 in \cite{LouAyhan} to conclude that $g^d \in G^{\p}A^{\p}$. Thus, there are $h \in G^{\p}, a \in A^{\p}$ such that $g^d = ha$, so $a = h^{-1}g^d$. Since $G \cap A = \{1\}$, we must have $a = 1$, so $g^d = h$. By purity of $G^{\p}$ in $G$ (condition (2) in \cref{defn-sub-kga}), there is $g^{\p} \in G^{\p}$ such that $g^d = h = (g^{\p})^d$. Since $\mc{M} \models T$, in particular, $G_{\tor} = \Gamma_{\tor}$; thus, there is $\gamma \in \Gamma$ such that $g = g^{\p}\gamma$. Since condition (2) of \cref{defn-sub-kga} gives us that $\Gamma \subseteq G^{\prime}$, we have $g \in G^{\p}$.

The proof that $A \cap K^{\p} = A^{\p}$ is similar, using condition (3) in \cref{defn-sub-kga}. Note that if $a^d = (a^{\p})^d$ for $a^{\p},a \in A$, we automatically have $a = a^{\p}$. This is because $A \subseteq K^{>0}$ and $K$ is a real closed field.

Now suppose $(K^{\p},G^{\p},A^{\p})$ satisfies conditions (1)-(4) in \cref{defn-sub-kga}. Since $(K^{\p},G^{\p},A^{\p})$ satisfies conditions (1)-(4), $(K^{\p},G^{\p},A^{\p})$ is a substructure of $(K,G,A)$. Therefore, $(K^{\p}) \cap A = A^{\p}$; moreover, since $A^{\p} \subseteq (K^{\p})^{>0}$, $(K^{\p})^{>0} \cap A = A^{\p}$.

Suppose that in addition, $(K^{\p},G^{\p},A^{\p})$ satisfies condition (5) in \cref{defn-sub-kga}. We want to show that $K^{\p}$ is closed under $\lambda$; that is, we want to show that $\lambda((K^{\p})^{>0}) = K^{\p} \cap A = A^{\p}$. Let $k \in (K^{\p})^{>0}$. By condition (5) in \cref{defn-sub-kga}, there is $a^{\p} \in A^{\p}$ such that $a^{\p} \leq k < a^{\p}\bdelta$. By definition of $\lambda$, we have $\lambda(k) = a^{\p}$, so $\lambda(k) \in A^{\p}$. Conversely, let $a^{\p} \in A^{\p}$. We must have $\lambda(a^{\p}) = a^{\p}$ by definition of $\lambda$, so it is clear that $A^{\p} \subseteq \lambda((K^{\p})^{>0})$. Therefore, $K^{\p}$ is closed under $\lambda$.

Conversely, suppose that in addition to $(K^{\p},G^{\p},A^{\p})$ satisfying conditions (1)-(4) in \cref{defn-sub-kga}, $K^{\p}$ is closed under $\lambda$. Let $k^{\p} \in (K^{\p})^{>0}$. Since we assume $K^{\p}$ is closed under $\lambda$, we have $\lambda((K^{\p})^{>0}) = (K^{\p})^{>0} \cap A = A^{\p}$. Therefore, $\lambda(k^{\p}) \in A^{\p}$, so $\lambda(k^{\p})$ is an element of $A^{\p}$ such that $\lambda(k^{\p}) \leq k^{\p} < \lambda(k^{\p})\bdelta$. Thus, condition (5) of \cref{defn-sub-kga} is fulfilled.
\end{proof}

Note that if $(K^{\p},G^{\p},A^{\p})$ is an $\lor(P,V)$-structure satisfying conditions (1) and (5) in \cref{defn-sub-kga}, then $A^{\p}$ is pure in $A$. However, in practice, to show that an $\lor(P,V)$-structure $(K^{\p},G^{\p},A^{\p})$ is in $\Sub(K,G,A)$, we will first check conditions (1)-(4) to show that $(K^{\p},G^{\p},A^{\p})$ is a substructure of $(K,G,A)$, then show that $K^{\p}$ is closed under $\lambda$.\\

We will now state two facts about freeness that we will use later on. Both are surely known, but we include proofs for the reader's convenience.

\begin{lem}\label{freeness-lemma}
Let $K^{\p}$ be a subfield of $K$, $G^{\p}$ a subgroup of $G$, and $A^{\p}$ a subgroup of $A$ such that $G^{\p}A^{\p} \subseteq K^{\p}(i)$ and $K^{\p}(i)$ and $\mbb{Q}(GA)$ are free over $\mbb{Q}(G^{\p}A^{\p})$. Let $E$ be a subset of $G$ or of $A$, and let $X \subseteq K$ be a subset which is algebraically independent over $K^{\p}(GA)$. Then $K^{\p}(\Re(E),X)^{\rc}(i)$ and $\mbb{Q}(GA)$ are free over $\mbb{Q}(G^{\p}A^{\p},E)$.
\end{lem}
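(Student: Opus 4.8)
The plan is to verify freeness via the transcendence-basis criterion of \cref{prop12-bourbaki}: it suffices to produce a transcendence basis of $K^{\p}(\Re(E),X)^{\rc}(i)$ over $\mbb{Q}(G^{\p}A^{\p},E)$ that is algebraically independent over $\mbb{Q}(GA)$. I would build this up in three stages, enlarging the base field and the ``left-hand'' field one step at a time and deferring all the ``algebraic'' enlargements ($\Re$, real closure, adjoining $i$) to the very end.

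First, move $E$ into the base. Since $E\subseteq G$ or $E\subseteq A$, every element of $E$ lies in $GA$ (writing $g=g\cdot 1$ or $a=1\cdot a$), so $E\subseteq\mbb{Q}(GA)$. I would establish the auxiliary fact: if $P$ and $Q$ are subfields of a common field that are free over $k$ and $c\in Q$, then $P(c)$ and $Q$ are free over $k(c)$. The proof: take a transcendence basis $\beta$ of $P$ over $k$ that is algebraically independent over $Q$ (\cref{prop12-bourbaki}); then $\beta$ is still a transcendence basis of $P(c)$ over $k(c)$ — it spans algebraically because $P(c)$ is algebraic over $k(\beta,c)=k(c)(\beta)$, and it is independent over $k(c)$ since $k(c)\subseteq Q$ — and it remains algebraically independent over $Q$, so \cref{prop12-bourbaki} applies. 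Iterating over finitely many elements at a time (algebraic (in)dependence being finitary) gives that $K^{\p}(i)(E)$ and $\mbb{Q}(GA)$ are free over $\mbb{Q}(G^{\p}A^{\p},E)$.

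Second, adjoin $X$. Since $K^{\p}(i)(GA)$ is algebraic over $K^{\p}(GA)$, the hypothesis that $X$ is algebraically independent over $K^{\p}(GA)$ upgrades to $X$ being algebraically independent over $K^{\p}(i)(GA)$, hence over $K^{\p}(i)(E)$ and over $\mbb{Q}(GA)$-together-with-$\beta$ for any transcendence basis $\beta\subseteq K^{\p}(i)(E)$. Picking $\beta$ a transcendence basis of $K^{\p}(i)(E)$ over $\mbb{Q}(G^{\p}A^{\p},E)$ that is algebraically independent over $\mbb{Q}(GA)$, the set $\beta\cup X$ is then a transcendence basis of $K^{\p}(i)(E)(X)$ over $\mbb{Q}(G^{\p}A^{\p},E)$ still algebraically independent over $\mbb{Q}(GA)$; so $K^{\p}(i)(E)(X)$ and $\mbb{Q}(GA)$ are free over $\mbb{Q}(G^{\p}A^{\p},E)$.

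Finally, for $g\in E\subseteq G$ one has $\Re(g)=\tfrac12(g+g^{-1})\in K^{\p}(i)(E)$ (and $\Re(E)=E$ when $E\subseteq A$), so $K^{\p}(\Re(E),X)\subseteq K^{\p}(i)(E)(X)$; consequently $K^{\p}(\Re(E),X)^{\rc}(i)$ is an algebraic extension of $K^{\p}(i)(E)(X)$, and a transcendence basis of the latter over $\mbb{Q}(G^{\p}A^{\p},E)$ is again one of the former. Applying \cref{prop12-bourbaki} once more gives the conclusion. I expect the main obstacle to be not any single stage — each is routine — but the bookkeeping of which composita (involving $GA$, $i$, $\Re(E)$, and the real closure) are algebraic over the relevant ``small'' field, so that transcendence bases are genuinely preserved at each step; in particular the observation that adjoining $E$ to $K^{\p}(i)$ already captures $\Re(E)$ is what lets the final algebraic step go through cleanly. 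One could instead organize the three stages through the transitivity of freeness (\cref{exercise14-bourbaki}), but manipulating transcendence bases directly seems the most transparent route.
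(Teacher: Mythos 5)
Your proof is correct and follows essentially the same route as the paper's (much terser) argument: the key observation in both is that $\Re(E)$ is rational in $E$ (so all the enlargements by $\Re(E)$, the real closure, and $i$ are algebraic over $K^{\p}(i)(E)(X)$), after which freeness is verified via the transcendence-basis criterion of \cref{prop12-bourbaki}. The only cosmetic difference is that the paper also cites the transitivity fact \cref{exercise14-bourbaki}, whereas you track a single transcendence basis $\beta\cup X$ through all three stages by hand, which amounts to the same thing.
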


\begin{proof}
Since $K^{\p}(i)$ and $\mbb{Q}(GA)$ are free over $\mbb{Q}(G^{\p}A^{\p})$, we have $G^{\p} \subseteq K^{\p}(i)$ and $A^{\p} \subseteq K^{\p}$. If $E \subseteq G$, then for any $z \in E$, we have $\Re(z) = \frac{z^2+1}{2z}$ since $G \subseteq \mbb{S}^1(K)$. Thus, $\Re(E) \subseteq \mbb{Q}(E)$. If $E \subseteq A$, then $\Re(E) = E$ since $A \subseteq K$. Therefore, in both cases, we have $\Re(E) \subseteq \mbb{Q}(E)$.

The conclusion of the lemma follows from the fact that $\Re(E) \subseteq \mbb{Q}(E)$, together with \cref{prop12-bourbaki} and \cref{exercise14-bourbaki}.

\end{proof}

\begin{cor}\label{freeness-cor}
Let $G^{\p}$ be a subgroup of $G$ and $A^{\p}$ a subgroup of $A$. Let $k$ be a subfield of $K$ such that $k \subseteq \mbb{Q}(\Re(G^{\p}A^{\p}))$ and let $X \subseteq K$. Suppose that $X$ is algebraically independent over $\mbb{Q}(GA)$. Then $k(X)^{\rc}(i)$ and $\mbb{Q}(GA)$ are free over $\mbb{Q}(G^{\p}A^{\p})$.
\end{cor}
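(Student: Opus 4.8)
The plan is to obtain \cref{freeness-cor} as an essentially immediate consequence of \cref{freeness-lemma}, applied with the auxiliary field $K^{\p}$ chosen to be $\mbb{Q}(G^{\p}A^{\p})$ and with the set $E$ taken to be empty. So first I would verify the hypotheses of \cref{freeness-lemma} for $K^{\p} := \mbb{Q}(G^{\p}A^{\p})$, which, by the standing convention, means $\mbb{Q}(\Re(G^{\p}A^{\p}),\Im(G^{\p}A^{\p}))$, a subfield of $K$. Writing a typical element $ga$ of $G^{\p}A^{\p}$, with $g \in G^{\p} \subseteq \mbb{S}^1(K)$ and $a \in A^{\p} \subseteq K^{>0}$, as $ga = \Re(ga) + \Im(ga)\,i$ shows that $G^{\p}A^{\p} \subseteq K^{\p}(i)$. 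Since $K^{\p}(i)$ is algebraic over $K^{\p} = \mbb{Q}(G^{\p}A^{\p})$, every element of $K^{\p}(i)$ is algebraic over $\mbb{Q}(G^{\p}A^{\p})$, so $K^{\p}(i)$ and $\mbb{Q}(GA)$ are trivially free over $\mbb{Q}(G^{\p}A^{\p})$. Moreover $G^{\p}A^{\p} \subseteq GA$ gives $K^{\p}(GA) = \mbb{Q}(GA)$, so the assumption that $X$ is algebraically independent over $\mbb{Q}(GA)$ is exactly the assumption on $X$ required by \cref{freeness-lemma}.

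Applying \cref{freeness-lemma} with $E = \emptyset$ (a subset of $G$) then yields that $K^{\p}(X)^{\rc}(i) = \mbb{Q}(G^{\p}A^{\p})(X)^{\rc}(i)$ and $\mbb{Q}(GA)$ are free over $\mbb{Q}(G^{\p}A^{\p})$. Finally, since $k \subseteq \mbb{Q}(\Re(G^{\p}A^{\p})) \subseteq K^{\p}$, we have $k(X)^{\rc}(i) \subseteq \mbb{Q}(G^{\p}A^{\p})(X)^{\rc}(i)$ (real closures are monotone among subfields of the fixed real closed field $K$, and then we adjoin $i$); and freeness over a base field is inherited by subfields of the first factor, directly from the definition, since a subset of the smaller field that is algebraically independent over the base field is a fortiori a subset of the larger field. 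This gives that $k(X)^{\rc}(i)$ and $\mbb{Q}(GA)$ are free over $\mbb{Q}(G^{\p}A^{\p})$, which is the corollary.

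I do not expect a genuine obstacle here: the statement is a repackaging of \cref{freeness-lemma}. The one point deserving attention is arranging the hypothesis ``$K^{\p}(i)$ and $\mbb{Q}(GA)$ are free over $\mbb{Q}(G^{\p}A^{\p})$'' of \cref{freeness-lemma} to hold with no work, which is why I take $K^{\p}$ to be exactly $\mbb{Q}(G^{\p}A^{\p})$ rather than any larger field: then $K^{\p}(i)$ is algebraic over the base and freeness is automatic, and one simultaneously avoids having to upgrade the algebraic independence of $X$ from $\mbb{Q}(GA)$ to a larger field. The real-closure and adjoin-$i$ bookkeeping then lines up precisely because $k \subseteq K^{\p}$.
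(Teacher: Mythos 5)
Your proof is correct and follows essentially the same route as the paper's: both deduce the corollary from \cref{freeness-lemma} with $E=\emptyset$, the only difference being that the paper applies the lemma with $K^{\p}=k$ directly (so that $k(i)$ is algebraic over $\mbb{Q}(G^{\p}A^{\p})$ and the freeness hypothesis is vacuous) rather than with $K^{\p}=\mbb{Q}(G^{\p}A^{\p})$ followed by passing to the subfield $k(X)^{\rc}(i)$. One harmless inaccuracy: $K^{\p}(GA)$ need not literally equal $\mbb{Q}(GA)$, since $\Im(G^{\p}A^{\p})$ need only lie in $\mbb{Q}(GA)(i)$ rather than in $\mbb{Q}(GA)$; but $K^{\p}(GA)$ is algebraic over $\mbb{Q}(GA)$, so $X$ remains algebraically independent over it, which is all that \cref{freeness-lemma} requires.
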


\begin{proof}
First note that for $g \in G^{\p}$ and $a \in A^{\p}$, $\Re(ga) = \frac{(ga)^2+a^2}{2ga}$ since $G^{\p} \subseteq \mbb{S}^1(K)$. Therefore, $\Re(G^{\p}A^{\p}) \subseteq \mbb{Q}(G^{\p}A^{\p})$. Since no (nonempty) subset of $k(i)$ is algebraically independent over $\mbb{Q}(G^{\p}A^{\p})$, $k(i)$ and $\mbb{Q}(GA)$ are free over $\mbb{Q}(G^{\p}A^{\p})$. Note that $k(GA) \subseteq \mbb{Q}(GA)$. Since we assume $X$ is algebraically independent over $\mbb{Q}(GA)$, $X$ is also algebraically independent over $k(GA)$. Applying \cref{freeness-lemma} with $E = \emptyset$, we see that $k(X)^{\rc}(i)$ and $\mbb{Q}(GA)$ are free over $\mbb{Q}(G^{\p}A^{\p})$.
\end{proof}

Since $\emptyset$ is considered to be algebraically independent over any field, we will sometimes apply \cref{freeness-lemma} and \cref{freeness-cor} with $X = \emptyset$.

\section{Proof of \cref{elequiv-statement}}\label{sec:elequiv}
In this section we establish \cref{elequiv-statement}. In fact, we prove the following slightly more general result.

\begin{thm}\label{main-thm}
Let $(K,G,A,(\gamma)_{\gamma \in \Gamma},(\delta)_{\delta \in \Delta})$ and $(L,H,B,(\gamma)_{\gamma \in \Gamma},(\delta)_{\delta \in \Delta})$ be two models of $T$. Then they are elementarily equivalent if and only if $[p]G = [p]H$ for all primes $p$, and for all $\gamma \in \Gamma$ and all $n > 0$:
\[
\gamma \text{ is an } n\text{th power in } G \text{ if and only if } \gamma \text{ is an } n\text{th power in } H.
\]
\end{thm}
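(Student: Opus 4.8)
The plan is to prove this by constructing a back-and-forth system between (suitably saturated elementary extensions of) the two models, where the "partial isomorphisms" are exactly the isomorphisms between structures in $\Sub(K,G,A)$ and $\Sub(L,H,B)$. First I would pass to $\kappa$-saturated elementary extensions for $\kappa$ large (say $\kappa > \av{\GD}$ and $\kappa > 2^{\av{T}}$), so that I can realize types and carry out the extension steps; the hypotheses on $[p]G$ and on $n$th powers of $\Gamma$ are preserved under elementary equivalence, so they hold in the extensions. The necessity direction is immediate: if the two structures are elementarily equivalent, then since $[p]G$ is finite (by \cite[Proposition 1.1]{BelZilber} applied to $\Gamma$, together with the regular-discreteness of $A$ giving $[n]A = n$) and the property "$\gamma$ has an $n$th root in $G$" is first-order with parameters from $\Gamma$, both conditions transfer.

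For sufficiency, I would set $\mc{I}$ to be the set of isomorphisms $F: (K^{\p},G^{\p},A^{\p}) \to (L^{\p},H^{\p},B^{\p})$ between elements of $\Sub(K,G,A)$ and $\Sub(L,H,B)$ that fix $\Gamma$ and $\Delta$ pointwise. The first task is to check $\mc{I}$ is nonempty: take $K^{\p} = \mbb{Q}(\RGD)^{\rc}$, $G^{\p} = \Gamma$ purified appropriately, $A^{\p} = \Delta$; one must verify this lies in $\Sub(K,G,A)$ — freeness over $\mbb{Q}(\GD)$ is automatic here since $K^{\p}$ is algebraic over $\mbb{Q}(\GD)$, and closure under $\lambda$ follows from \cref{lambda-closure-lem3} — and similarly on the $L$ side, so the "identity on $\Gamma,\Delta$" map works once the orientation axioms guarantee the real closures are isomorphic over $\mbb{Q}(\GD)$. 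Then the heart is the \textbf{extension step}: given $F \in \mc{I}$ and $a \in K$, I must extend $F$ to some $F' \in \mc{I}$ with $a \in \dom(F')$ (and symmetrically). I would break this into three sub-steps corresponding to the three "coordinates": (1) if $a$ lies in $G \setminus G^{\p}$ or $A \setminus A^{\p}$, first extend the group isomorphism using \cref{claim-812} (for the oriented group $G$, using $[p]G=[p]H$ and $G_{\tor}=\Gamma_{\tor}=H_{\tor}$ to meet its hypotheses) or the analogous statement for the discrete group $A$ (using regular discreteness); (2) enlarge $A^{\p}$ to keep it closed under $\lambda$, using the Fundamental Lemma \cref{fundamental-lemma} to control $\lambda((K^{\p}(a)^{\rc})^{>0})$ — this is where the hypothesis on $n$th powers in $\Gamma$ and the Mann axioms feed in, via \cite[Lemma 5.12]{LouAyhan} as in \cref{lem-sub-kga}; (3) take real closures and check freeness is preserved, using \cref{freeness-lemma}, \cref{freeness-cor}, \cref{prop12-bourbaki}, and \cref{exercise14-bourbaki}, together with \cref{density-lemma} to place any transcendental part of $a$ outside the algebraic closure of $GA$ so that algebraic independence is maintained.

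The main obstacle, as I see it, is step (2) combined with the compatibility of the orientation data: after extending the abstract group isomorphism $G^{\p}\ip{a}_G \to H^{\p}\ip{b}_H$ one must choose the realization $b$ (and the corresponding field embedding) so that \emph{simultaneously} the oriented-group structure is respected, the $\lambda$-closure of the new field matches on both sides, and the $\lor(P,V)$-substructure axioms (1)--(4) of \cref{defn-sub-kga} persist — in particular the freeness condition (4), which can fail if the new transcendental element is chosen carelessly. Coordinating the group-theoretic extension (governed by \cref{claim-812}) with the valuation-theoretic extension (governed by \cref{fundamental-lemma} and the valuation inequality) is the delicate point; the saturation of the target model is what gives enough room to make a consistent choice, and the orientation axioms of $\GD$ plus the Mann axioms are exactly what guarantee the relevant quantifier-free $\lor(\GD)$-type of $a$ over $K^{\p}$ is already determined, so that a matching $b$ exists. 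Once the extension step is established in both directions, a standard induction on a well-ordering of $K \cup L$ produces an isomorphism between the saturated models, whence $\mc{M} \equiv \mc{N}$.
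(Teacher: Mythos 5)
Your proposal follows essentially the same route as the paper: a back-and-forth system $\mc{I}$ of isomorphisms between members of $\Sub(K,G,A)$ and $\Sub(L,H,B)$ fixing $\Gamma$ and $\Delta$, with the same base structure ($\mbb{Q}(\RGD)^{\rc}$, the purification of $\Gamma$ in $G$, and $\Delta$) for nonemptiness, and the same tools (\cref{claim-812}, the \nameref{fundamental-lemma}, the freeness lemmas, \cref{density-lemma}, and saturation) for the extension step. The only cosmetic difference is that the paper organizes the extension step into four cases according to where $a$ lives ($A$, $\Re(G)\cup\Im(G)$, the real closure of $K^{\p}(\Re(GA)\cup\Im(GA))$, or its complement) and concludes directly from the nonempty back-and-forth system rather than assembling a full isomorphism of the saturated models.
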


First suppose $(K,G,A,(\gamma)_{\gamma \in \Gamma},(\delta)_{\delta \in \Delta}) \equiv (L,H,B,(\gamma)_{\gamma \in \Gamma},(\delta)_{\delta \in \Delta})$. The statements ``$[p]G = n$'' and ``$\gamma$ is/is not an $n$th power in $G$'' are first-order sentences in our language. We can also express the statement ``$[p]G = \infty$" using first-order sentences in our language. Thus, the ``only if'' direction of the theorem statement is clear.\\

We now prove the other direction of \cref{main-thm}. Fix two models
\[
\mc{M} := (K,G,A,(\gamma)_{\gamma \in \Gamma},(\delta)_{\delta \in \Delta}) \hbox{ and } \mc{N} := (L,H,B,(\gamma)_{\gamma \in \Gamma},(\delta)_{\delta \in \Delta})
\]
of $T$ such that $[p]G = [p]H$ for all primes $p$, and for all $\gamma \in \Gamma$ and all $n > 0$:
\[
\gamma \text{ is an } n\text{th power in } G \text{ if and only if } \gamma \text{ is an } n\text{th power in } H.
\]
We want to prove that $\mc{M} \equiv \mc{N}$. We may assume that $\mc{M}, \mc{N}$ are $\kappa$-saturated for some infinite $\kappa > \av{\GD}$. Let $\mathcal{I}$ be the collection of isomorphisms between members of $\Sub(K,G,A)$ and $\Sub(L,H,B)$ that fix $\Delta$ and $\Gamma$ pointwise. We will show that $\mathcal{I}$ is a nonempty back-and-forth system, which will give us that $\mc{M} \equiv \mc{N}$.

\subsection{$\mc{I}$ is nonempty}\label{subsec:i-nonempty}

To see that $\mc{I}$ is nonempty, let
\[
K^{\p} = \mbb{Q}(\RGD)^{\rc}, G^{\p} = \{g \in G : g^n \in \Gamma \text{ for some } n > 0\}, A^{\p} = \Delta
\]
and let
\[
L^{\p} = \mbb{Q}(\RGD)^{\rc}, H^{\p} = \{h \in H : h^n \in \Gamma \text{ for some } n > 0\}, B^{\p} = \Delta.
\]

We must first check that $K^{\p} \subseteq K$, $G^{\p} \subseteq (K^{\p})^2$, and $A^{\p} \subseteq K^{\p}$. It is clear that $K^{\p} \subseteq K$. If $g \in G^{\p}$, then there is $n > 0$ such that $g^n \in \Gamma$. Let $z = g^n$, $a = \Re(z)$, and $b = \Im(z)$. Since $z \in \Gamma$, $a^2+b^2 = 1$, so it can be checked that $z^2-2az+1 = 0$. Therefore, $g$ is algebraic over $\mbb{Q}(\RGD)^{\rc}$, and so $g \in K^{\p}(i)$.

We now check that $(K^{\p},G^{\p},A^{\p}) \in \Sub(K,G,A)$. By \cref{lambda-closure-lem2}, $A^{\p}$ is pure in $A$. We apply \cref{freeness-cor} with $X = \emptyset$ and $k = \mbb{Q}(\RGD)$ to get that $K^{\p}(i)$ and $\mbb{Q}(GA)$ are free over $\mbb{Q}(G^{\p}A^{\p})$. Now we must show that $K^{\p}$ is closed under $\lambda$. Since $(K^{\p},G^{\p},A^{\p})$ satisfies conditions (1)-(4) of \cref{defn-sub-kga}, $K^{\p} \cap A = A^{\p}$. By \cref{lambda-closure-lem2} and \cref{lambda-closure-lem3}, $\lambda((K^{\p})^{>0}) = \Delta = A^{\p}$, so $K^{\p}$ is closed under $\lambda$.

The proof that $(L^{\p},H^{\p},B^{\p}) \in \Sub(L,H,B)$ is similar.

We now show that there is a function $f: (K^{\p},G^{\p},A^{\p}) \to (L^{\p},H^{\p},B^{\p})$ with $f \in \mc{I}$. To prove this, let $f: K^{\p} \to L^{\p}$ be the obvious function extending the identity map on $\Re(\GD)$. Let $p_1$ be the set of $\lor$-formulas satisfied by elements of $\Re(\GD)$ in $\mc{M}$, and let $p_2$ be the set of $\lor$-formulas satisfied by elements of $\Re(\GD)$ in $\mc{N}$. Since we assume that $\mc{M}$ and $\mc{N}$ satisfy the orientation axioms for $\GD$, we have $p_1 = p_2$. Therefore, $f$ is an ordered field isomorphism. Clearly, $f(A^{\p}) = B^{\p}$. Similarly, by our assumption that $\gamma$ is an $n$th power in $G$ if and only if $\gamma$ is an $n$th power in $H$, $f(G^{\p}) = H^{\p}$. (Let $g: K^2 \to L^2$ be the function defined by $g(k_1,k_2) = (f(k_1),f(k_2))$. By $f(G^{\p})$, we mean $g(G^{\p})$.) Clearly $f$ fixes $\GD$ pointwise.

Therefore, $\mc{I}$ is nonempty.

\subsection{$\mc{I}$ is a back-and-forth system} Let $(K^{\p},G^{\p},A^{\p}) \in \Sub(K,G,A)$ and $(L^{\p},H^{\p},B^{\p}) \in \Sub(L,H,B)$. Let $\iota: (K^{\p},G^{\p},A^{\p}) \to (L^{\p},H^{\p},B^{\p})$ be in $\mc{I}$, and let $a \in K \setminus K^{\p}$. We have four cases:
\begin{enumerate}
\item $a \in A$
\item $a \in \Re(G)$ or $a \in \Im(G)$
\item $a \in K^{\p}(\Re(GA) \cup \Im(GA))^{\rc}$
\item $a \in K \setminus K^{\p}(\Re(GA) \cup \Im(GA))^{rc}$
\end{enumerate}

\begin{case-mainthm}\label{case1-mainthm} $a \in A$. \end{case-mainthm}

Define sets $\Sigma_1$, $\Sigma_2$ of $\lom(V,K^{\p})$-formulas in the variable $x$ by
\[
\Sigma_1 := \{\iota(k_1) < x < \iota(k_2) : a \in (k_1,k_2), k_1,k_2 \in K^{\p}\},
\]
\[
\Sigma_2 := \{\iota(a^{\p})x^l \in B^{[m]} : a^{\p} \in A^{\p}, l \in \mbb{Z}, m > 0, a^{\p}a^l \in A^{[m]}\}.
\]
Our first step is to find $b \in B$ such that $b$ satisfies $\Sigma_1 \cup \Sigma_2$.

Since $(K^{\p},G^{\p},A^{\p}) \in \Sub(K,G,A)$, $A^{\p}$ is pure in $A$. Therefore, $[p]A^{\p} \leq [p]A$ for all primes $p$. Moreover, since $(K^{\p},G^{\p},A^{\p}) \in \Sub(K,G,A)$, $\Delta \subseteq A^{\p}$. Therefore, by the last axiom in $T$, $[p]A^{\p} \geq [p]A$. So we have $[p]A^{\p} = [p]A$ for all primes $p$. Similarly, we have $[p]B^{\p} = [p]B$ for all primes $p$. Since $A$ and $B$ are regularly discrete, we have $[p]A = [p]B = p$ for all primes $p$ by \cref{reg-discrete-lem}. Since $\mc{M},\mc{N}$ are $\kappa$-saturated (where $\kappa > \av{K^{\p}}$), $(A,A^{\p})$ and $(B,B^{\p})$ are $\kappa$-saturated. Therefore, we may apply Lemma 4.2.1 in \cite{Ayhan}. Using this lemma, we fix $h \in B$ such that for all $a^{\p} \in A^{\p}$, $m > 0$, and $l \in \mbb{Z}$,
\[
a^{\p}a^l \in A^{[m]} \text{ if and only if } \iota(a^{\p})h^l \in B^{[m]}.
\]

In order to prove that $\Sigma_1 \cup \Sigma_2$ is satisfiable by an element of $B$, we show that $\Sigma_1 \cup \Sigma_2$ is finitely satisfiable by an element of $B$. For if we can show this, then $\Sigma_1 \cup \Sigma_2$ is satisfied by an element $b \in B$ by $\kappa$-saturation of $(L,H,B)$.

To prove that $\Sigma_1 \cup \Sigma_2$ is finitely satisfiable by an element of $B$, it suffices to show that for given $k_1,k_2 \in K^{\p}$ such that $k_1 < a < k_2$, there exists $\beta \in B$ such that $\iota(k_1) < \beta < \iota(k_2)$ and $\beta$ satisfies $\Sigma_2$. Thus, we fix $k_1,k_2 \in K^{\p}$ such that $k_1 < a < k_2$. We may assume that $k_1,k_2 > 0$ since $a \in A \subseteq K^{>0}$.

Since $K^{\p}$ is closed under $\lambda$ and $a \notin A^{\p}$, we have
\[
k_1 < \lambda(k_1)\bdelta < \lambda(k_1)\bdelta^2 < \ldots < a < k_2.
\]
Therefore, the interval contains infinitely many elements of $A^{\p}$.

Since $\iota: (K^{\p},G^{\p},A^{\p}) \to (L^{\p},H^{\p},B^{\p})$ is an isomorphism, the interval $(\iota(k_1),\iota(k_2))$ in $L^{\p}$ contains infinitely many elements of $B^{\p}$. Since $h \in B$, the interval $I := (\iota(k_1)h^{-1},\iota(k_2)h^{-1})$ contains infinitely many elements of $B$.

Consider the set of $\lor(V,B)$-formulas
\[
\{x \in (\iota(k_1)h^{-1},\iota(k_2)h^{-1}) \wedge \exists y \in B (x = y^k) : k > 0\}.
\]
By $\kappa$-saturation, to find an element satisfying this set of formulas, it suffices to find an element $x \in I \cap \bigcap_{i=1}^s B^{[n_i]}$ for arbitrary $n_1,\ldots,n_s \geq 1$ and $s \geq 1$. If we are given $n_1,\ldots,n_s$, let $n = n_1 \ldots n_s$. Since there are infinitely many elements of $B$ in $I$, let $b_1,b_2$ be elements of $I \cap B$ such that there are at least $n$ elements of $B$ in the interval $(b_1,b_2)$. Since $B$ is assumed to be regularly discrete, there is $\eta^{\p} \in B^{\p}$ with $\eta^{\p} \in (b_1,b_2) \cap B^{[n]}$ by \cref{reg-discrete-lem}. Therefore, $\eta^{\p} \in I \cap \bigcap_{i=1}^s B^{[n_i]}$.

Let $\eta \in I \cap \bigcap_{k \geq 1} B^{[k]}$. In particular, $\eta$ is divisible in $B$ by all $k \geq 1$. Let $\beta = h \eta^{1/n}$. Note that $h^n \eta \in (\iota(k_1),\iota(k_2))$, so $\beta^n \in (\iota(k_1),\iota(k_2))$.

It follows from our choice of $\beta$ that for all $a^{\p} \in A^{\p},l \in \mbb{Z}$, and $m > 0$, we have
\[
a^{\p}a^l \in A^{[m]} \text{ if and only if } \iota(a^{\p})\beta^l \in B^{[m]}.
\]

Therefore, $\iota(k_1) < \beta < \iota(k_2)$ and $\beta$ satisfies $\Sigma_2$.

We now have $b \in B$ such that $\mc{N} \models \Sigma_1(b) \cup \Sigma_2(b)$. Since $b$ satisfies the same cut over $K^{\p}$ that $a$ does over $L^{\p}$, we have an $\lor$-isomorphism $\iota^{\p}: K^{\p}(a)^{\rc} \to L^{\p}(b)^{\rc}$ extending $\iota$ which takes $a$ to $b$. Since $b$ satisfies $\Sigma_2$, we have $\iota^{\p}(A^{\p}\ip{a}_A) = B^{\p}\ip{b}_B$. To check that $\iota^{\p} \in \mc{I}$, we must check that
\[
(K^{\p}(a)^{\rc},G^{\p},A^{\p}\ip{a}_A) \in \Sub(K,G,A).
\]
In particular, we must show that $(K^{\p}(a)^{\rc})(i)$ and $\mbb{Q}(GA)$ are free over $\mbb{Q}(G^{\p}A^{\p}\ip{a}_A)$. By assumption, $K^{\p}(i)$ and $\mbb{Q}(GA)$ are free over $\mbb{Q}(G^{\p}A^{\p})$. Therefore, we may apply \cref{freeness-lemma} with $E = \{a\}$ and $X = \emptyset$ to get that $K^{\p}(a)^{\rc}(i)$ and $\mbb{Q}(GA)$ are free over $\mbb{Q}(G^{\p}A^{\p},a)$. Since $\mbb{Q}(G^{\p}A^{\p},a) \subseteq \mbb{Q}(G^{\p}A^{\p}\ip{a}_A)$, $K^{\p}(i)$ and $\mbb{Q}(GA)$ are free over $\mbb{Q}(G^{\p}A^{\p}\ip{a}_A)$.

We now want to prove that $K^{\p}(a)^{\rc}$ is closed under $\lambda$. Since $(K^{\p}(a)^{\rc},G^{\p},A^{\p}\ip{a}_A)$ is a substructure of $(K,G,A)$, it suffices to prove that $\lambda((K^{\p}(a)^{\rc})^{>0}) = A^{\p}\ip{a}_A$. Since $a \in A$, in order to prove this, it suffices to prove that $v(a) \notin v(K^{\p})$ by the \nameref{fundamental-lemma}. Suppose for a contradiction that $v(a) \in v(K^{\p})$. By the proof of the \nameref{fundamental-lemma}, if $v(a) \in v(K^{\p})$, then $a \in K^{\p}$. But by assumption, $a \in K \setminus K^{\p}$, a contradiction. So we must have $v(a) \notin v(K^{\p})$.

The proof that $(L^{\p}(b)^{\rc},H^{\p},B^{\p}\ip{b}_B) \in \Sub(L,H,B)$ is similar.

\begin{case-mainthm}\label{case2-mainthm} Now suppose $a \in \Re(G)$. (The case where $a \in \Im(G)$ is similar.) \end{case-mainthm}
Let $A^{(1)} := A^{\p}$ and for $j = 1,2,\ldots$, let $A^{(j+1)} = \lambda((K^{\p}(A^{(j)},a)^{\rc})^{>0})$. Let $A^{\infty} := \bigcup_{j=1}^{\infty} A^{(j)}$. Note that $A^{(j)} \subseteq A^{(j+1)}$ for all $j$ by definition. Moreover, for each $j$, $\av{A^{(j)}} < \kappa$ since $\kappa > \av{K^{\p}}$. Therefore, $\av{A^{\infty}} < \kappa$.\\

Let $B^{(1)} := B^{\p}$. For $j \geq 1$, we recursively define $B^{(j)} \subseteq B$ and an ordered field isomorphism $f_j: K^{\p}(A^{(j)})^{\rc} \to L^{\p}(B^{(j)})^{\rc}$ such that $f_j \in \mc{I}$ and $f_j(A^{(j)}) = B^{(j)}$. In particular, we require that $(K^{\p}(A^{(j)})^{\rc},G^{\p},A^{(j)}) \in \Sub(K,G,A)$ and $(L^{\p}(B^{(j)})^{\rc},H^{\p},B^{(j)}) \in \Sub(L,H,B)$ for all $j$. Note that $K^{\p}(A^{(1)}) = K^{\p}$ and $L^{\p}(B^{(1)}) = B^{\p}$. Thus, we take $f_1: K^{\p}(A^{(1)})^{\rc} \to L^{\p}(B^{(1)})^{\rc}$ to be $\iota$. Now suppose we have defined $f_j$ and $B^{(j)}$ ($j \geq 1$), and we want to define $B^{(j+1)}$ and $f_{j+1}$. \\

Let $K_j = K^{\p}(A^{(j)})^{\rc}$ and let $L_j = L^{\p}(B^{(j)})^{\rc}$. Since $f_j \in \mc{I}$, $K_j$ is closed under $\lambda$. Therefore, $\lambda(K_j^{>0}) = A^{(j)}$. To define $B^{(j+1)}$ and $f_{j+1}$, we consider two cases: (1) for all $x \in (K_j(a)^{\rc})^{>0}$, $v(x) \in v(K_j)$, and (2) there is $z \in (K_j(a)^{\rc})^{>0}$ such that $v(z) \notin v(K_j)$.

First assume that for all $x \in (K_j(a)^{\rc})^{>0}$, $v(x) \in v(K_j)$. By Case 1 of the \nameref{fundamental-lemma}, $A^{(j+1)} = \lambda((K_j(a)^{\rc})^{>0}) = A^{(j)}$. So we take $B^{(j+1)} = B^{(j)}$ and $f_{j+1} = f_j$. \\

Now suppose that there is $z \in (K_j(a)^{\rc})^{>0}$ such that $v(z) \notin v(K_j)$.\\

 We show that in this case, $A^{(j+1)} = A^{(j)}\ip{\lambda(f(a))}_A$ for some $\lor(K_j)$-definable function $f$. Let $f$ be an $\lor(K_j)$-definable function such that $z = f(a)$. Then by Case 2 of the \nameref{fundamental-lemma}, we have $\lambda((K_j(a)^{\rc})^{>0}) = A^{(j)}\ip{\lambda(f(a))}_A$. (Here we must use the assumption that $\lambda(K_j^{>0}) = A^{(j)}$.) But since $K_j = K^{\p}(A^{(j)})^{\rc}$, we have $K_j(a)^{\rc} = K^{\p}(A^{(j)},a)^{\rc}$. Therefore,
\[
A^{(j+1)} = \lambda((K^{\p}(A^{(j)},a)^{\rc})^{>0}) = \lambda((K_j(a)^{\rc})^{>0}) = A^{(j)}\ip{\lambda(f(a))}_A.
\]
By definition, $K_{j+1} = K^{\p}(A^{(j+1)})^{\rc}$. Thus, by what we just proved, $K_{j+1} = K^{\p}(A^{(j)}\ip{\lambda(f(a))}_A)^{\rc}$. It can be shown that
\[
K^{\p}(A^{(j)}\ip{\lambda(f(a))}_A)^{\rc} = (K^{\p}(A^{(j)})(\lambda(f(a))))^{\rc}
\]
so $K_{j+1} = K_j(\lambda(f(a)))^{\rc}$.\\

By our inductive assumption, $f_j: (K_j,G^{\p},A^{(j)}) \to (L_j,H^{\p},B^{(j)})$ is in $\mc{I}$. Moreover, $\lambda(f(a)) \in A$ and $\lambda(f(a)) \notin K_j$. (Since $v(f(a)) = v(\lambda(f(a))$ and we assume $v(f(a)) \notin v(K_j)$, we cannot have $\lambda(f(a)) \in K_j$.) Therefore, we may apply \cref{case1-mainthm} of this theorem to find $b \in B$ and an ordered field isomorphism
\[
f_{j+1}: (K_j(\lambda(f(a)))^{\rc},G^{\p},A^{(j)}\ip{\lambda(f(a))}_A) \to (L_j(b)^{\rc},H^{\p},B^{(j)}\ip{b}_B)
\]
with $f_{j+1} \in \mc{I}$ taking $\lambda(f(a))$ to $b$. Thus, in this case, we take $B^{(j+1)}$ to be $B^{(j)}\ip{b}_B$. Note that $f_{j+1}$ extends $f_j$ by construction.

This completes the recursive construction. Now define
\[
f_{\infty} := \bigcup_{j \geq 1} f_j, B^{\infty} := \bigcup_{j \geq 1} B^{(j)}.
\]

We will now show that $(K^{\p}(A^{\infty})^{\rc},G^{\p},A^{\infty}) \in \Sub(K,G,A)$, $(L^{\p}(B^{\infty})^{\rc},H^{\p},B^{\infty}) \in \Sub(L,H,B)$, and $f_{\infty} \in \mc{I}$.\\

To show that $(K^{\p}(A^{\infty})^{\rc},G^{\p},A^{\infty}) \in \Sub(K,G,A)$, we first show that $A^{\infty}$ contains $\Delta$ and is pure in $A$. Since $A^{\p} \subseteq A^{\infty}$ and $A^{\p}$ contains $\Delta$ by assumption, $A^{\infty}$ also contains $\Delta$. The pureness of $A^{\infty}$ follows easily from the pureness of $A^{(N)}$ for each $N$.\\

We now check the freeness condition. By assumption, $K^{\p}(i)$ and $\mbb{Q}(GA)$ are free over $\mbb{Q}(G^{\p}A^{\p})$. By definition of $A^{\infty}$, we have $A^{\p} \subseteq A^{\infty}$. Therefore, we may apply \cref{freeness-lemma} with $E = A^{\infty}$ and $X = \emptyset$ to show that $K^{\p}(A^{\infty})^{\rc}(i)$ and $\mbb{Q}(GA)$ are free over $\mbb{Q}(G^{\p}A^{\infty})$.\\

We now want to check that $K^{\p}(A^{\infty})^{\rc}$ is closed under $\lambda$. In particular, we must show that $\lambda((K^{\p}(A^{\infty})^{\rc})^{>0}) \subseteq A^{\infty}$. Let $x \in (K^{\p}(A^{\infty})^{\rc})^{>0}$. Then $x \in (K^{\p}(A^{(N+1)})^{\rc}$ for some $N \in \mbb{N}$. By construction, we have $\lambda(x) \in A^{(N+2)}$. Therefore, $\lambda(x) \in A^{\infty}$.\\

The proof that $(L^{\p}(B^{\infty})^{\rc},H^{\p},B^{\infty}) \in \Sub(L,H,B)$ is similar, using the construction of $B^{\infty}$. By construction, $f_{\infty}$ is an ordered field isomorphism between $K^{\p}(A^{\infty})^{\rc}$ and $L^{\p}(B^{\infty})^{\rc}$ taking $A^{\infty}$ to $B^{\infty}$. Since $f_{\infty}$ extends $\iota$, it fixes $\Gamma$ and $\Delta$, so $f_{\infty} \in \mc{I}$.\\

Our next step is to find $\iota^{\p} \in \mc{I}$ such that $\iota^{\p}$ extends $f_{\infty}$ and $a$ is in the domain of $\iota^{\p}$. Since $a \in \Re(G)$, let $g \in G$ with $a = \Re(g)$. By assumption, $G$ and $H$ are regularly dense oriented abelian groups and $[p]G = [p]H$ for all primes $p$. Since $\mc{M},\mc{N} \models T$, we also have $G_{\tor} = G^{\p}_{\tor}$ and $H_{\tor} = H^{\p}_{\tor}$. Moreover, $G^{\p}$ is pure in $G$ and $H^{\p}$ is pure in $H$. Thus, we apply \cref{claim-812} to obtain $\eta \in H$ and an oriented group isomorphism $j: G^{\p}\ip{g}_G \to H^{\p}\ip{\eta}_H$ taking $g$ to $\eta$ and extending $f_{\infty}$. (That is, for $(\alpha,\beta) \in G^{\p}$, $j(\alpha,\beta) = (f_{\infty}(\alpha),f_{\infty}(\beta))$.)\\

We now find $h \in H$ such that $h$ satisfies the set of $\lor(L^{\p},P)$-formulas $S_1 \cup S_2 \cup S_3$ in the variable $x$, where
\[
S_1 = \{f_{\infty}(k_1) < \Re(x) < f_{\infty}(k_2) : k_1 < \Re(g) < k_2, k_1,k_2 \in K^{\p}(A^{\infty})^{\rc}\},
\]
\[
S_2 = \{f_{\infty}(g^{\p})x^l \in H^{[m]} : g^{\p}g^l \in G^{[m]}, g^{\p} \in G^{\p}, l \in \mbb{Z}, m > 0\},
\]
\[
S_3 = \{f_{\infty}(g^{\p})x^l \notin H^{[m]} : g^{\p}g^l \notin G^{[m]}, g^{\p} \in G^{\p}, l \in \mbb{Z}, m > 0\}
\]

In order to find an element of $H$ satisfying $S_1 \cup S_2 \cup S_3$, it suffices by $\kappa$-saturation of $\mc{N}$ to show that every finite subset of $S_1 \cup S_2 \cup S_3$ is realized by an element of $H$. As in \cref{case1-mainthm}, it suffices to find $z \in H$ realizing a single formula from $S_1$ such that $z$ also satisfies $S_2$ and $S_3$.\\

Thus, let $k_1,k_2 \in K^{\p}(A^{\infty})^{\rc}$ be such that $k_1 < \Re(g) < k_2$. We may assume without loss of generality that $k_1,k_2 \in [-1,1]$. Let $y_1,y_2\in K$ such that $y_1 = (1 - (f_{\infty}(k_1))^2)^{1/2}$, $y_2 = (1 - (f_{\infty}(k_2))^2)^{1/2}$. (In particular, $y_1,y_2 > 0$.) Let $z_1 = (f_{\infty}(k_1),y_1)$ and let $z_2 = (f_{\infty}(k_2),y_2)$. Thus, $y_1,y_2$ are elements of $L^{\p}(B^{\infty})^{\rc}$ such that $z_1,z_2 \in \mbb{S}^1(L)$. Since $k_1 < k_2$, $\mc{O}(1,z_2,z_1)$ holds in $\mc{M}$. Let
\[
I = \{z \in \mbb{S}^1(L) : \mc{M} \models \mc{O}(z_2\eta^{-1},z,z_1\eta^{-1})\}.
\]
That is, $I$ is the "interval" in $\mbb{S}^1(L)$ between $z_2\eta^{-1}$ and $z_1\eta^{-1}$.\\

We claim that there is $z \in \bigcap_{m =1}^{\infty} H^{[m]}$ such that $z \in I$. By $\kappa$-saturation of $\mc{N}$, it suffices to find $z^{\p} \in I$ with $z^{\p} \in \bigcap_{j=1}^n H^{[m_j]}$ for arbitrary $m_1,\ldots,m_n \geq 1$, $n \geq 1$. If we are given $m_1,\ldots,m_n$, let $m = m_1 \ldots m_n$. By regular density of $H$ in $\mbb{S}^1(L)$, there is $z^{\p} \in H^{[m]}$ such that $z^{\p} \in I$. Since $m = m_1 \ldots m_n$, we also have $z \in H^{[m_j]}$ for $1 \leq j \leq n$.\\

Thus, let $z$ be an element of $I$ with $z \in \bigcap_{m=1}^{\infty} H^{[m]}$. By definition of $I$, $\mc{N} \models \mc{O}(z_2,z\eta,z_1)$. In particular, $f_{\infty}(k_1) < \Re(z\eta) < f_{\infty}(k_2)$ holds.\\

We now show that for all $g^{\p} \in G^{\p}$, $l \in \mbb{Z}$, $m > 0$ such that $g^{\p}g^l \in G^{[m]}$, we have $f_{\infty}(g^{\p})(z\eta)^l \in H^{[m]}$. By our choice of $z$, we have $z \in H^{[m]}$, so we also have $z^l \in H^{[m]}$. Since $j$ is an oriented group isomorphism extending $f_{\infty}$ and taking $g$ to $\eta$, we have $f_{\infty}(g^{\p})\eta^l \in H^{[m]}$. Therefore, $f_{\infty}(g^{\p})(z\eta)^l \in H^{[m]}$.\\

We must also show that for all $g^{\p} \in G^{\p}$, $l \in \mbb{Z}$, $m > 0$ such that $g^{\p}g^l \notin G^{[m]}$, we have $f_{\infty}(g^{\p})(z\eta)^l \notin H^{[m]}$. Suppose $f_{\infty}(g^{\p})(z\eta)^l \in H^{[m]}$. By our choice of $z$, we have $z^{-l} \in H^{[m]}$. Therefore, $f_{\infty}(g^{\p})\eta^l \in H^{[m]}$. Since $j$ is an oriented group isomorphism extending $f_{\infty}$ and taking $g$ to $\eta$, we have $g^{\p}g^l \in G^{[m]}$.\\

So $z\eta$ satisfies every formula in $S_2$ and $S_3$, as well as the formula $f_{\infty}(k_1) < \Re(z\eta) < f_{\infty}(k_2)$ for our given $k_1,k_2$.\\

By $\kappa$-saturation of $\mc{N}$, we have $h \in H$ such that $h$ satisfies $S_1 \cup S_2 \cup S_3$. Since $\Re(h)$ satisfies the same cut over $L^{\p}(B^{\infty})$ that $a$ does over $K^{\p}(A^{\infty})$, we can extend $f_{\infty}$ to an ordered field isomorphism $\iota^{\p}: K^{\p}(A^{\infty},a)^{\rc} \to L^{\p}(B^{\infty},\Re(h))^{\rc}$ taking $a$ to $\Re(h)$. Moreover, since $\iota^{\p}$ is an ordered field isomorphism, $\iota^{\p}(g) = h$. Thus, we have $\iota^{\p}(G^{\p}\ip{g}_G) = H^{\p}\ip{h}_H$ by our choice of $h$. Since $\iota^{\p}$ extends $f_{\infty}$, we also have $\iota^{\p}(A^{\infty}) = B^{\infty}$.\\

We now show that $(K^{\p}(A^{\infty},a)^{\rc},G^{\p}\ip{g}_G, A^{\infty}) \in \Sub(K,G,A)$. We first check the freeness condition. As proved above, $K^{\p}(A^{\infty})^{\rc}$ and $\mbb{Q}(GA)$ are free over $\mbb{Q}(G^{\p}A^{\infty})$. Therefore, we apply \cref{freeness-lemma} with $E = \{g\}$ and $X = \emptyset$ to get that $K^{\p}(A^{\infty},a)^{\rc}(i)$ and $\mbb{Q}(GA)$ are free over $\mbb{Q}(G^{\p}A^{\infty},g)$. Since $\mbb{Q}(G^{\p}A^{\infty},g) \subseteq \mbb{Q}(G^{\p}\ip{g}_GA^{\infty})$, we see that $K^{\p}(A^{\infty},a)^{\rc}(i)$ and $\mbb{Q}(GA)$ are free over $\mbb{Q}(G^{\p}\ip{g}_GA^{\infty})$.\\

Next, we want to show that $\lambda((K^{\p}(A^{\infty},a)^{\rc})^{>0}) = A^{\infty}$. Let $x \in (K^{\p}(A^{\infty},a)^{\rc})^{>0}$. Then $x = \sigma(k,c,a)$ for some $\lor$-definable function $\sigma$, some tuple $k$ of elements of elements of $K^{\p}$, and some tuple $c$ of elements of $A^{\infty}$. Since $c$ is a tuple of elements from $A^{\infty}$, we must have $c \subseteq A^{(j)}$ for some $j \geq 1$. We have $\lambda(\sigma(k,c,a)) \in A^{(j+1)}$ by definition of $A^{(j+1)}$, so $\lambda(x) \in A^{\infty}$.\\

Let $b = \Re(h)$. The proof that $(L^{\p}(B^{\infty},b)^{\rc},H^{\p}\ip{h}_H,B^{\infty}) \in \Sub(L,H,B)$ is mostly similar to the proof that $(K^{\p}(A^{\infty},a)^{\rc},G^{\p}\ip{g}_G,A^{\infty}) \in \Sub(K,G,A)$. We only need to show that $L^{\p}(B^{\infty},b)^{\rc}$ is closed under $\lambda$.

Let $\sigma(\ell,d,b) \in (L^{\p}(B^{\infty},b)^{\rc})^{>0}$, where $\sigma$ is an $\lor$-definable function, $\ell$ is a tuple of elements from $L^{\p}$, and $d$ is a tuple of elements from $B^{\infty}$. Let $y = \lambda(\sigma((\iota^{\p})^{-1}(\ell), (\iota^{\p})^{-1}(d),a))$, so that $y \in A^{\infty}$. By definition of $\lambda$,
\[
y \leq \sigma((\iota^{\p})^{-1}(\ell), (\iota^{\p})^{-1}(d),a) \leq \bdelta y.
\]
Since $\iota^{\p}$ is an isomorphism taking $a$ to $b$ and fixing $\Delta$, $\iota^{\p}(y) \leq \sigma(\ell,d,b) \leq \bdelta\iota^{\p}(y)$. Therefore, $\lambda(x) = \iota^{\p}(y)$. Since $\iota^{\p}(A^{\infty}) = B^{\infty}$, we have $\lambda(x) \in B^{\infty}$. Therefore, $L^{\p}(B^{\infty},b)^{\rc}$ is closed under $\lambda$.

Therefore, $\iota^{\p}$ is an element of $\mc{I}$ extending $\iota$ with $a$ in its domain.

\begin{case-mainthm}\label{case3-mainthm} Suppose $a \in K^{\p}(\Re(GA) \cup \Im(GA))^{\rc}$. \end{case-mainthm}
Since $a \in K^{\p}(\Re(GA) \cup \Im(GA))^{\rc}$, there are tuples $x = (x_1,\ldots,x_n)$ and $y = (y_1,\ldots,y_m)$ of elements of $G$, tuples $e = (e_1,\ldots,e_n)$ and $c = (c_1,\ldots,c_m)$ of elements of $A$, and an $\lor(K^{\p})$-definable function $\sigma$ such that
\[
a = \sigma(\Re(x_1)e_1,\ldots,\Re(x_n)e_n,\Im(y_1)c_1,\ldots,\Im(y_m)c_m)
\]
By using \cref{case1-mainthm} repeatedly, we find tuples $b = (b_1,\ldots,b_n)$ and $d = (d_1,\ldots,d_m)$ of elements of $B$ and an isomorphism
\[
\iota^{\p} : (K^{\p}(e,c)^{\rc},G^{\p},A^{\p}\ip{e,c}_A) \to (L^{\p}(b,d)^{\rc},H^{\p},B^{\p}\ip{b,d}_B)
\]
extending $\iota$ with $\iota^{\p} \in \mc{I}$.

Now let $K^{\pp} := K^{\p}(e,c)^{\rc}$ and $L^{\pp} := L^{\p}(b,d)^{\rc}$. By using \cref{case2-mainthm} repeatedly, we find $w_1,\ldots,w_n$, $z_1,\ldots,z_m \in H$, $A^{\pp} \subseteq A$, $B^{\pp} \subseteq B$, and an isomorphism
\[
\iota^{\pp}: (K^{\pp}(A^{\pp},\Re(x),\Im(y))^{\rc},G^{\p}\ip{x,y}_G,A^{\pp}) \to (L^{\pp}(B^{\pp},\Re(w),\Im(z))^{\rc},H^{\p}\ip{w,z}_H,B^{\pp})
\]
extending $\iota^{\p}$ with $\iota^{\pp} \in \mc{I}$. In particular, $A^{\p}\ip{e,c}_A \subseteq A^{\pp}$ and $B^{\p}\ip{b,d}_B \subseteq B^{\pp}$. Note that $a$ is in the domain of $\iota^{\pp}$. Thus, $\iota^{\pp} \in \mc{I}$ extends $\iota$ and has $a$ in its domain.

\begin{case-mainthm}\label{case4-mainthm} Suppose $a \in K \setminus K^{\p}(\Re(GA) \cup \Im(GA))^{\rc}$.\end{case-mainthm}

As in \cref{case2-mainthm} above, we first extend $\iota$ to an isomorphism
\[
f_{\infty} : (K^{\p}(A^{\infty})^{\rc},G^{\p},A^{\infty}) \to (L^{\p}(B^{\infty})^{\rc},H^{\p},B^{\infty})
\]
where $A^{\infty}, B^{\infty}, f_{\infty}$ are defined as in \cref{case2-mainthm}. \\

Next, we want to find $b \in L \setminus L^{\p}(\Re(HB) \cup \Im(HB))^{\rc}$ such that $b$ realizes the same cut over $L^{\p}(B^{\infty})^{\rc}$ that $a$ does over $K^{\p}(A^{\infty})^{\rc}$. We will then extend $f_{\infty}$ to an element of $\mc{I}$ that maps $a$ to $b$.

Let $\Phi_1$ be the collection of formulas of the form
\[
\neg ( \exists h = (h_1,\ldots,h_n) \in H^n \exists b = (b_1,\ldots,b_n) \in B^n [x = f(p_1(h_1b_1), \ldots, p_n(h_nb_n))])
\]
where $f$ is a $\lor(L^{\p})$-definable function from $L^n$ to $L$ and each $p_j$ is either $\Re$ or $\Im$. Thus, if $\varphi$ is a formula in $\Phi_1$ of the above form, there is an $\lor(L^{\p})$-definable function $f_{\varphi}: (HB)^n \to L$ such that
\[
f_{\varphi}(h_1b_1,\ldots,h_nb_n) = f(p_1(h_1b_1),\ldots,p_n(h_nb_n))
\]
for all $h_1,\ldots,h_n \in H$ and $b_1,\ldots,b_n \in B$. Let $\Phi_2$ be the collection of formulas
\[
\Phi_2 := \{f_{\infty}(k_1) < x < f_{\infty}(k_2) : k_1,k_2 \in K^{\p}(A^{\infty})^{\rc}, k_1 < a < k_2\}.
\]

If we have finitely many formulas $\varphi_1,\ldots,\varphi_k$ in $\Phi_1$, there are $s_1,\ldots,s_k \in \mbb{N}$ and functions $f_{\varphi_1},\ldots,f_{\varphi_k}$ such that for each $j \in \{1,\ldots,k\}$, $f_{\varphi_j}: (HB)^{s_j} \to L$ is a $\lor(L^{\p})$-definable function. We can assume that there is $m \in \mbb{N}$ such that each $f_{\varphi_j}$ is a function from $(HB)^m$ to $L$. By \cref{density-lemma}, the set $L \setminus \bigcup_{j=1}^k f_j(HB)^m)$ is dense in $L$. Therefore, given a finite subset of formulas $\Phi^{\p} \subseteq \Phi_1 \cup \Phi_2$, we can find $x \in L$ satisfying $\Phi^{\p}$. \\

By $\kappa$-saturation of $\mc{N}$, there is $b$ that satisfies all formulas in $\Phi_1 \cup \Phi_2$. This $b$ lies in $L \setminus L^{\p}(\Re(HB) \cup \Im(HB))^{\rc}$ and realizes the same cut over $L^{\p}(B^{\infty})^{\rc}$ that $a$ does over $K^{\p}(A^{\infty})^{\rc}$. Therefore, there is an ordered field isomorphism $\iota^{\p}: K^{\p}(A^{\infty},a)^{\rc} \to L^{\p}(B^{\infty},b)^{\rc}$ extending $f_{\infty}$.\\

We check that $(K^{\p}(A^{\infty},a)^{\rc},G^{\p},A^{\infty}) \in \Sub(K,G,A)$. In particular, we must show that $K^{\p}(A^{\infty},a)^{\rc}(i)$ and $\mbb{Q}(GA)$ are free over $\mbb{Q}(G^{\p}A^{\infty})$. First note that since $K^{\p}(\Re(GA))^{\rc}(i)$ is algebraically closed and $a \in K \setminus K^{\p}(\Re(GA))^{\rc}$, $a$ must be algebraically independent over $K^{\p}(\Re(GA))^{\rc}(i)$. Since $K^{\p}(GA) \subseteq K^{\p}(\Re(GA))^{\rc}(i)$, $a$ is also algebraically independent over $K^{\p}(GA)$. By construction, $K^{\p}(A^{\infty})^{\rc}$ and $\mbb{Q}(GA)$ are free over $\mbb{Q}(G^{\p}A^{\infty})$. Therefore, we may apply \cref{freeness-lemma} with $E = \emptyset$ and $X = \{a\}$ to get that $K^{\p}(A^{\infty},a)^{\rc}(i)$ and $\mbb{Q}(GA)$ are free over $\mbb{Q}(G^{\p}A^{\infty})$. We must also show that $K^{\p}(A^{\infty},a)^{\rc}$ is closed under $\lambda$. But this follows by definition of $A^{\infty}$ as in \cref{case2-mainthm}.\\

This completes the proof that $\mc{I}$ is a back-and-forth system.

\section{Proof of \cref{main-thm-statement}}\label{sec:thmA}

In this section we will give a proof of \cref{main-thm-statement}. We start by introducing the notion of a special formula and show that $T$ has quantifier-elimination up to boolean combinations of these formulas. Note that $T$ is not complete and makes no assumptions on the cardinality of $[p]G$ in a model of  $(K,G,A,(\gamma)_{\gamma \in \Gamma},(\delta)_{\delta \in \Delta})$ of $T$. Adding the requirement that $[p]G$ is finite for each prime $p$, we will establish the following stronger theorem.

\begin{thm}\label{qr-theorem}
Let $\mc{M} := (K,G,A,(\gamma)_{\gamma \in \Gamma},(\delta)_{\delta \in \Delta})$ be a model of $T$ such that $[p]G$ is finite for each prime $p$. Then every subset of $K^m$ definable in $\mc{M}$ is a boolean combination of subsets of $K^m$ defined in $\mc{M}$ by formulas of the form
\[
\exists y \exists z (V(y) \wedge P(z) \wedge \phi(x,y,z))
\]
where $\phi(x,y,z)$ is a quantifier free $\lor(K)$-formula.
\end{thm}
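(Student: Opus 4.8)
The plan is to prove the statement in its parameter-free form and reinstate parameters at the end. Call an $\lor(P,V,\Gamma,\Delta)$-formula \emph{special} if it has the form $\exists y\,\exists z\,(V(y)\wedge P(z)\wedge\phi)$ with $\phi$ quantifier-free in $\lor$, where $y,z$ are tuples and $V(y),P(z)$ abbreviate the obvious conjunctions, and write $\sptp(\bar a)$ for the set of special formulas a tuple $\bar a$ satisfies; let $T'$ be $T$ together with the scheme ``$[p]G$ is finite'' over all primes $p$. By the standard back-and-forth criterion for relative quantifier elimination it suffices to prove: if $\mc{M}^{*}\models T'$ is $\kappa$-saturated for some $\kappa>\av{\GD}$ and $\bar a,\bar b\in(\mc{M}^{*})^{m}$ satisfy $\sptp(\bar a)=\sptp(\bar b)$ (over $\emptyset$, with the constants for $\Re(\Gamma)\cup\Im(\Gamma)\cup\Delta$ available), then $\tp^{\mc{M}^{*}}(\bar a)=\tp^{\mc{M}^{*}}(\bar b)$. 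Indeed, granting this, if some $\lor(P,V,\Gamma,\Delta)$-formula were not equivalent modulo $T'$ to a Boolean combination of special formulas, then --- since Boolean combinations of special formulas are closed under negation --- a compactness argument produces a model of $T'$ with two tuples of the same special type separated by that formula, and passing to a saturated elementary extension contradicts the implication. Applying this with the parameters of a given definable subset of $K^{m}$ promoted to free variables and then substituting them back produces exactly the asserted normal form, since $\phi(\bar x,\bar c,y,z)$ is then quantifier-free in $\lor(K)$.

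The engine is the back-and-forth system from the proof of \cref{main-thm}. Applying that proof with $\mc{M}=\mc{N}=\mc{M}^{*}$ --- so that the hypotheses $[p]G=[p]H$ and ``$\gamma$ is an $n$th power in $G$ iff in $H$'' hold trivially --- the family $\mc{I}$ of $\lor(P,V)$-isomorphisms between members of $\Sub(K,G,A)$ that fix $\Gamma$ and $\Delta$ pointwise is a nonempty back-and-forth system on $\mc{M}^{*}$. Hence any $\iota\in\mc{I}$ preserves \emph{all} $\lor(P,V,\Gamma,\Delta)$-formulas on its domain, and it is enough to exhibit a single $\iota\in\mc{I}$ with $\bar a\subseteq\dom(\iota)$ and $\iota(\bar a)=\bar b$.

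I would construct such an $\iota$ by running the four-case induction of \cref{main-thm} on the coordinates $a_{1},\dots,a_{m}$, but with the images prescribed: at stage $i$ the element $a_{i}$ must go to $b_{i}$. The inductive invariant is that, writing $g_{i}$ for $\iota_{i}$ together with the partial map $a_{j}\mapsto b_{j}$ ($j>i$), the map $g_{i}$ preserves special types of tuples from its domain; for $i=0$ this is precisely $\sptp(\bar a)=\sptp(\bar b)$, since the base field $\mbb{Q}(\RGD)^{\rc}$ consists of $\emptyset$-definable elements. For the inductive step, every restriction the proof of \cref{main-thm} places on the image of $a_{i}$ --- which of the four cases $a_{i}$ occupies (membership in $A$, in $\Re(G)\cup\Im(G)$, or in the real closure of the field generated by $GA$ over the current substructure), the cut of $a_{i}$ over that substructure, the pattern of $A^{[m]}$- and $G^{[m]}$-memberships underlying $\Sigma_{2}$ and $S_{2}\cup S_{3}$, and the non-membership scheme $\Phi_{1}$ in the generic case --- can be expressed as a Boolean combination of special formulas in the variables indexed by $\dom(\iota_{i-1})$ and $a_{i}$ once the auxiliary ring quantifiers building real-closure witnesses, evaluating $\lor$-definable functions, and computing $\lambda$ are absorbed: here quantifier elimination for real closed fields turns ``$t=\sigma(\cdot)$'' quantifier-free and ``$y=\lambda(t)$'' into $\exists y\,(V(y)\wedge\cdots)$. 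Since $g_{i-1}$ preserves special types and $a_{i}\in\dom(g_{i-1})$ with $g_{i-1}(a_{i})=b_{i}$, the element $b_{i}$ satisfies exactly the $\iota_{i-1}$-pushforward of the constraints on $a_{i}$, so the $i$th step of the \cref{main-thm}-construction goes through with $b_{i}$ as image; that the resulting extension again satisfies the invariant reduces to the assertion that the constraint sets $\Sigma_{1}\cup\Sigma_{2}$, $S_{1}\cup S_{2}\cup S_{3}$ and $\Phi_{1}\cup\Phi_{2}$ generate complete special types over the substructures in question.

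This last assertion is the main obstacle: one must show that the cut of a single element over a member of $\Sub(K,G,A)$, together with its $V$- or $P$-divisibility pattern (and, in the generic case, the scheme $\Phi_{1}$), actually \emph{decides} every special formula over that substructure, not merely enough special formulas to run the abstract back-and-forth of \cref{main-thm}. Carrying this out requires unwinding the definition of a special formula through the Mann and orientation axioms and the $\lambda$-structure, and I expect it to be isolated as a ``special quantifier-elimination'' lemma proved by the same four-case analysis as \cref{main-thm}; it is also the point where the hypothesis that $[p]G$ is finite is needed, on the $G$-side, parallel to the role of regular discreteness of $A$ (via \cref{reg-discrete-lem}) on the $A$-side.
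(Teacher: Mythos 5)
Your high-level target is correct --- showing that ``simple special types'' determine full types in a saturated model of $\Th(\mc{M})$ does yield the theorem --- but there is a genuine gap, and it sits exactly where you flag it, though not of the nature you suspect. The paper does \emph{not} prove this in one step. It first proves (\cref{qr-main-lemma}) that $T$ alone eliminates quantifiers down to a richer class of special formulas, those of the form $\exists y\,\exists z\,(V(y)\wedge P(z)\wedge\theta_V^1(y)\wedge\theta_P^2(z)\wedge\phi(x,y,z))$ in which $\theta_V^1$ and $\theta_P^2$ are $V$- and $P$-restrictions (\cref{defn-p-restriction}) of $\lom(\Delta)$- and $\Sigma_{\orm}(\Gamma)$-formulas. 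These restricted conjuncts encode the divisibility and orientation data about $A$ and $G$ directly, with mixed $\exists/\forall$ quantifiers over $V$ and $P$, and it is only \emph{with} them available that the cut-plus-divisibility constraints $\Sigma_1\cup\Sigma_2$, $S_1\cup S_2\cup S_3$, $\Phi_1\cup\Phi_2$ visibly pin down the special type. Your ``special'' formulas, on the other hand, are exactly the final existential form of the theorem's conclusion. To express, say, $g'g^l\notin G^{[m]}$ as a Boolean combination of your formulas you need $[m]G$ finite (so that the complement of $G^{[m]}$ in $G$ is a finite union of cosets), and that reduction is precisely the content of the second stage of the paper's proof (\cref{subsec:qr-thm}), carried out syntactically using the quantifier elimination results \cref{qe-lem-1}--\cref{qe-lem-4} for regularly dense oriented and regularly discrete abelian groups. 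So the ``special quantifier-elimination lemma'' you expect is not another instance of the four-case analysis; it is a genuinely different piece of machinery, a QE theorem for the group reducts, and it is the only place where $[p]G$ finite and regular discreteness of $A$ are used.

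There is a second, more local problem with the coordinate-by-coordinate plan. The $\alpha_j$ need not be algebraically independent over $\mbb{Q}(GA)$: the paper fixes a transcendence basis $\alpha_1,\ldots,\alpha_r$ and chooses witnesses $g\in G^{|g|}$, $a\in A^{|a|}$ and $\lor$-definable $\sigma_{r+1},\ldots,\sigma_m$ with $\sigma_j(g,a,\alpha_1,\ldots,\alpha_r)=\alpha_j$, then constructs the isomorphism $\iota$ \emph{in bulk} on $F(\alpha,g,A^\infty)^{\rc}$, matching $g,A^\infty$ against $h,B^\infty$ all at once using one application of $\kappa$-saturation to the whole special type $\sptp^{\mc{M}}(g,A^\infty\mid\alpha)$. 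Your inductive step, in which $a_i\mapsto b_i$ is processed after $a_1,\ldots,a_{i-1}$, never discusses what to do when $a_i$ lies in $\dcl_{\lor}$ of the earlier coordinates together with $GA$ --- \cref{case3-mainthm} of the back-and-forth does not let you prescribe the image freely, it produces images for the $G$- and $A$-witnesses first and then $a_i$'s image is forced. Handling this correctly is what pushes one to do the construction globally rather than coordinate by coordinate, and is an independent reason the paper's Lemma~\ref{qr-main-lemma} looks different from the ``four cases, one coordinate at a time'' shape you sketch.

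In short: the two-stage decomposition (relative QE over $T$ to the paper's richer special formulas, then a syntactic reduction to your simpler form using group-level QE and the finiteness hypotheses) is essential, not cosmetic, and the obstacle you correctly identify at the end of your proposal is resolved by \cref{qe-lem-1}--\cref{qe-lem-4} rather than by another back-and-forth. You would also need to replace the coordinate-by-coordinate scheme by the bulk construction with a transcendence basis to handle algebraic dependencies among the $\alpha_j$.
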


By \cite[Proposition 1.1]{BelZilber} every finite rank subgroup of $\mbb{S}^1(\mbb{R})$ satisfies the assumptions of \cref{qr-theorem}. The first part of Theorem A follows easily from this theorem.

\subsection{Special formulas and types} A\emph{ special $\lor(P,V,\Gamma,\Delta)$-formula} in $x$ (where $x$ is a tuple of variables) with parameters from $S$ is a formula $\psi(x)$ of the form
\[
\exists y \exists z (V(y) \wedge P(z) \wedge \theta_V^1(y) \wedge \theta_P^2(z) \wedge \phi(x,y,z))
\]
where $y$ is a tuple of variables, $z$ is a tuple of pairs of variables, $\theta^1(y)$ is an $\lom(\Delta)$-formula, $\theta^2(z)$ is an element of $\Sigma_{\orm}(\Gamma)$ (as defined in \cref{subsec:orag}), $\theta_V^1(y)$ is the $V$-restriction of $\theta^1(y)$ (recursively defined as on page 10 of \cite{Ayhan}, with $U$ replaced by $V$), $\theta_P^2(z)$ is the $P$-restriction of $\theta^2(z)$, and $\phi(x,y,z)$ is an $\lor(\Gamma,\Delta,S)$-formula. If $y = (y_1,\ldots,y_n)$ and $z = ((z_{11},z_{12}),\ldots,(z_{m1},z_{m2}))$, then $V(y)$ is an abbreviation for $V(y_1) \wedge \ldots \wedge V(y_n)$ and $P(z)$ is an abbreviation for $P(z_{11},z_{12}) \wedge \ldots \wedge P(z_{m1},z_{m2})$. By a \emph{special formula (in $x$)}, we mean a special $\lor(P,V,\Gamma,\Delta)$-formula in $x$ with parameters from $\emptyset$.\\

Now let $\mc{M} := (K,G,A,(\gamma)_{\gamma \in \Gamma},(\delta)_{\delta \in \Delta})$ be a model of $T$. Let $Y, C \subseteq K$. The \emph{special type of $Y$ over $C$}, denoted $\sptp^{\mc{M}}(Y|C)$, is the set of special formulas with parameters from $C$ satisfied by $Y$ in $\mc{M}$.\\

The following fact is Fact 1 in \cite{LouAyhan}, translated to fit our situation.

\begin{fact}\label{boolean-algebra-fact}
Let $B$ be the Boolean algebra of $T$-equivalence classes of $\lor(P,V,\Gamma,\Delta)$-formulas in the variables $x = (x_1,\ldots,x_m)$. For an $\lor(P,V,\Gamma,\Delta)$-formula $\phi(x)$, let $\phi(x)/T$ denote its $T$-equivalence class. Let $\Psi \subseteq B$ denote the set of (cosets of) special $\lor(P,V,\Gamma,\Delta)$-formulas in $x$. For an $\lor(P,V,\Gamma,\Delta)$-type $p(x)$ containing $T$, let $[p(x)] = \{\phi(x)/T : \phi(x) \in p(x)\}$. Suppose that for any $p_1,p_2 \in S_x(T)$,
\[
\text{if } [p_1(x)] \cap \Psi = [p_2(x)] \cap \Psi, \text{ then } [p_1(x)] = [p_2(x)].
\]
Then $\Psi$ generates $B$ as a Boolean algebra.
\end{fact}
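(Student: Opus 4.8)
The plan is to prove this by a purely Boolean-algebraic \emph{double compactness} argument inside the Stone space $S_x(T)$, which needs no property of $T$ beyond compactness; this is essentially the proof of Fact~1 in \cite{LouAyhan}. Write $B_0 \subseteq B$ for the subalgebra generated by $\Psi$, and for an $\lor(P,V,\Gamma,\Delta)$-formula $\phi(x)$ let $[\phi] := \{p \in S_x(T) : \phi(x) \in p\}$, a clopen subset of $S_x(T)$, so that $\phi/T \mapsto [\phi]$ identifies $B$ with the algebra of clopen subsets of $S_x(T)$ and the ultrafilter $[p]$ corresponds to the point $p$. The goal is to show $B_0 = B$, i.e. that every $\phi(x)$ is $T$-equivalent to a finite Boolean combination of special formulas.

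First I would extract a separation statement from the hypothesis: if $p \neq q$ in $S_x(T)$ then $[p] \neq [q]$, so in contrapositive form the hypothesis yields a special formula lying in exactly one of $p, q$; negating it if necessary produces $\eta_{p,q}$ with $\eta_{p,q}/T \in B_0$, $\eta_{p,q} \in p$, and $\neg\eta_{p,q} \in q$. Then, fixing $\phi(x)$, I would first fix $p \in [\phi]$ and note that $\{[\neg\eta_{p,q}] : q \in [\neg\phi]\}$ is a cover of the compact set $[\neg\phi]$ by clopen sets; extracting a finite subcover $q_1,\dots,q_n$, the formula $\psi_p := \eta_{p,q_1} \wedge \cdots \wedge \eta_{p,q_n}$ satisfies $\psi_p/T \in B_0$, $p \in [\psi_p]$, and $[\psi_p] \cap [\neg\phi] = \emptyset$, hence $[\psi_p] \subseteq [\phi]$. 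Letting $p$ range over $[\phi]$, the sets $[\psi_p]$ now cover the compact set $[\phi]$; a finite subcover $p_1,\dots,p_m$ gives $[\phi] = [\psi_{p_1}] \cup \cdots \cup [\psi_{p_m}]$, so $\phi$ is $T$-equivalent to $\psi_{p_1} \vee \cdots \vee \psi_{p_m}$, a Boolean combination of special formulas. Since $\phi$ was arbitrary, $B_0 = B$.

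I do not expect a serious obstacle; the points needing care are simply that $B_0$, as a subalgebra, is closed under negation and under finite meets and joins (so the $\eta_{p,q}$, the $\psi_p$, and the final disjunction all lie in $B_0$), and that compactness of $S_x(T)$ is invoked twice, once to cover $[\neg\phi]$ and once to cover $[\phi]$. The same reasoning can be phrased more conceptually via Stone duality: the inclusion $B_0 \hookrightarrow B$ dualizes to a continuous surjection $S(B) = S_x(T) \to S(B_0)$ sending $p$ to the ultrafilter it generates on $B_0$, which is determined by $[p] \cap \Psi$ since $\Psi$ generates $B_0$; the hypothesis says this surjection is injective, a continuous bijection between compact Hausdorff spaces is a homeomorphism, and hence $B_0 = B$.
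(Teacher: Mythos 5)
Your proof is correct. The paper does not actually prove this statement---it is quoted verbatim (``translated to fit our situation'') as Fact~1 of van den Dries and G\"unayd\i n \cite{LouAyhan}---and your double-compactness argument in $S_x(T)$, equivalently the Stone-duality reformulation, is precisely the standard proof of that fact, so there is nothing to add.
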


Next we fix some notation that we will use in the rest of the paper. Let $\mc{L}$ be a language and let $\mc{A}$ be an $\mc{L}$-structure. Whenever $C, Y \subseteq A$, $\tp^{\mc{A}}(Y|C)$ will denote the $\mc{L}(C)$-type of $Y$. For a sublanguage $\mc{L}^{\p}$ of $\mc{L}$, $\tp_{\mc{L}^{\p}}^{\mc{A}}(Y|C)$ will denote the $\mc{L}^{\p}(C)$-type of $Y$.

Let $\mc{B}$ be another $\mc{L}$-structure and fix an injective function $f: C \to B$. We define $f(\tp_{\mc{L}^{\p}}^{\mc{A}}(Y|C))$ by
\[
f(\tp_{\mc{L}^{\p}}^{\mc{A}}(Y|C)) = \{\phi(x,f(c)) : \phi(x,z) \text{ an } \mc{L}^{\p}\text{-formula}, c \in C^{\av{y}}, \phi(x,c) \in \tp_{\mc{L}^{\p}}^{\mc{A}}(Y|C)\}.
\]
If $\mc{A}$ is an $\lor(P,V,\Gamma,\Delta)$-structure, then we define $f(\sptp^{\mc{A}}(Y|C))$ by
\[
f(\sptp^{\mc{A}}(Y|C)) = \{\phi(x,f(c)) : \phi(x,z) \text{ a special formula}, c \in C^{\av{y}}, \phi(x,c) \in \sptp^{\mc{A}}(Y|C)\}.
\]

\subsection{Quantifier elimination up to special formulas} In this section, we prove that $T$ eliminates quantifiers up to special formulas.

\begin{lem}\label{qr-main-lemma} Each $\lor(P,V,\Gamma,\Delta)$-formula $\psi(x)$ is equivalent in $T$ to a Boolean combination of special $\lor(P,V,\Gamma,\Delta)$-formulas in $x$.\end{lem}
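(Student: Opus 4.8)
The plan is to apply \cref{boolean-algebra-fact}. Let $\Psi$ be the set of $T$-equivalence classes of special $\lor(P,V,\Gamma,\Delta)$-formulas in $x$; it then suffices to show that for all complete types $p_1,p_2\in S_x(T)$, if $p_1$ and $p_2$ contain (up to $T$-equivalence) the same special formulas, then $p_1=p_2$. To prove this, realize $p_1$ by a tuple $\bar a$ in a $\kappa$-saturated model $\mc{M}=(K,G,A,(\gamma)_{\gamma\in\Gamma},(\delta)_{\delta\in\Delta})$ of $T$ and $p_2$ by a tuple $\bar b$ in a $\kappa$-saturated model $\mc{N}=(L,H,B,(\gamma)_{\gamma\in\Gamma},(\delta)_{\delta\in\Delta})$ of $T$, where $\kappa>\av{\GD}$. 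It is enough to exhibit a back-and-forth system between $\mc{M}$ and $\mc{N}$ containing a partial isomorphism that fixes $\Gamma$ and $\Delta$ pointwise, has $\bar a$ in its domain, and sends $\bar a$ to $\bar b$: the standard back-and-forth argument then gives $\mc{M}\models\chi(\bar a)\iff\mc{N}\models\chi(\bar b)$ for every $\lor(P,V,\Gamma,\Delta)$-formula $\chi$, that is, $p_1=p_2$.

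First I would verify that $\mc{M}\equiv\mc{N}$, so that the back-and-forth system $\mc{I}=\mc{I}(\mc{M},\mc{N})$ built in \cref{sec:elequiv} is available. By \cref{main-thm} it is enough that $[p]G=[p]H$ for every prime $p$ and that, for every $\gamma\in\Gamma$ and every $n>0$, $\gamma$ is an $n$th power in $G$ if and only if it is one in $H$. Each of these is, up to Boolean combination, a special \emph{sentence}: ``$\gamma\in G^{[n]}$'' is the $P$-restriction of the $\lorm(\Gamma)$-formula $\exists z\,(z\cdot z\cdots z=\gamma)$, and ``$[p]G=k$'' for $k\geq1$ is likewise a Boolean combination of $P$-restrictions of $\lorm$-sentences, using that $\Sigma_{\orm}(\Gamma)$ is closed under negation, $\wedge$, $\vee$ and quantification over pairs; agreeing on all of these for all $k$, $\mc{M}$ and $\mc{N}$ agree on the value of $[p]G$ whether it is finite or not. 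Since a special sentence is a special formula with no free occurrence of $x$, it lies in $p_1$ exactly when it lies in $p_2$; hence $\mc{M}$ and $\mc{N}$ agree on all special sentences and therefore on the conditions above, so $\mc{M}\equiv\mc{N}$ and $\mc{I}$ is a nonempty back-and-forth system.

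Next I would extend the base isomorphism $\iota_0\in\mc{I}$ of \cref{subsec:i-nonempty} --- between the $\mbb{Q}(\RGD)^{\rc}$-based substructures --- within $\mc{I}$ to a partial isomorphism with $\bar a=(a_1,\ldots,a_m)$ in its domain and value $\bar b$ at $\bar a$, adding one coordinate at a time. Suppose $\iota\in\mc{I}$ extends $\iota_0$, has $a_1,\ldots,a_{i-1}$ in its domain $(K^{\p},G^{\p},A^{\p})$ and satisfies $\iota(a_j)=b_j$ for $j<i$; apply the appropriate one of the four cases of \cref{sec:elequiv} with $a=a_i$. In each case the construction there is free to take as the image of $a_i$ any element of the target satisfying a prescribed list of conditions over the current domain: the cut of $a_i$ over the underlying real closed field together with its multiplicative and torsion relations with $G^{\p}$, $A^{\p}$, $\Gamma$ and $\Delta$ (the sets $\Sigma_1\cup\Sigma_2$, $S_1\cup S_2\cup S_3$, $\Phi_1\cup\Phi_2$, and so on). The key point is that this list, expressed as a property of $a_i$ with parameters only the language constants, is a special formula in $x$ evaluated at $\bar a$: the finitely many domain elements occurring are field terms in the constants, in $a_1,\ldots,a_{i-1}$ (which are among the free variables $x$) and in finitely many elements of $A$ and of $G$; the existence of those elements together with their internal $A$- and $G$-relations is recorded by the $V$- and $P$-restricted parts $\theta_V^1$ and $\theta_P^2$, and their remaining field relations with $x$ are recorded by $\phi$. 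Since $\bar a$ and $\bar b$ satisfy the same special formulas, $b_i$ satisfies the $\iota$-translate of this list, hence is an admissible image for $a_i$, and the case construction yields $\iota'\in\mc{I}$ extending $\iota$ with $\iota'(a_i)=b_i$. After $m$ steps we obtain $\iota\in\mc{I}$ with $\bar a$ in its domain and $\iota(\bar a)=\bar b$, as required.

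The main obstacle is the verification, in the last step, that the constraints determining the admissible images of a new coordinate in each of the four cases of \cref{sec:elequiv} are --- after the current domain is absorbed, via the $\exists y\,(V(y)\wedge\cdots)$ and $\exists z\,(P(z)\wedge\cdots)$ quantifiers and their $V$- and $P$-restrictions, into a formula in $x$ over the constants alone --- genuine special formulas. This requires matching the design of special formulas (the closure properties of $\Sigma_{\orm}(\Gamma)$ and the recursive definitions of the $V$- and $P$-restrictions) line by line against the back-and-forth steps, taking particular care with the multiplicative and torsion data on the $G$- and $A$-sides and with the elements of $A$ adjoined through the $\lambda$-closures in \cref{case2-mainthm}. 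One must also check that the auxiliary elements introduced in \cref{case2-mainthm} and \cref{case4-mainthm} (the chains $A^{(j)}$, the witnesses in $G$ and $H$, and their images) do not exceed the available $\kappa$-saturation and that all intermediate structures remain in $\Sub(K,G,A)$ and $\Sub(L,H,B)$; but these are precisely the checks already carried out in \cref{sec:elequiv} and they transfer without change.
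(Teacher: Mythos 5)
Your overall strategy --- reduce to \cref{boolean-algebra-fact}, observe that the hypotheses of \cref{main-thm} are expressible by special sentences so that the back-and-forth system $\mc{I}$ is available, and then produce a member of $\mc{I}$ carrying $\bar a$ to $\bar b$ --- is exactly the paper's. The gap is in how you produce that member. You extend $\iota$ one coordinate at a time, and at step $i$ you claim that, because $\bar a$ and $\bar b$ satisfy the same special formulas, $b_i$ satisfies the $\iota$-translate of the constraints ($\Sigma_1\cup\Sigma_2$, $S_1\cup S_2\cup S_3$, $\Phi_1\cup\Phi_2$) governing admissible images of $a_i$. But those constraints are stated over the \emph{current domain} $(K^{\p},G^{\p},A^{\p})$, which by step $i$ contains auxiliary witnesses --- the chains $A^{(j)}$, elements $g\in G$ with $a_j=\Re(g)$, and their real closures --- whose images in $\mc{N}$ were chosen by saturation at earlier steps with reference only to $b_1,\ldots,b_{i-1}$. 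Agreement of $\bar a$ and $\bar b$ on special formulas guarantees that \emph{some} choice of those auxiliary images is compatible with $b_i$ (a special formula existentially quantifies its $V$- and $P$-witnesses), not that the particular choice already frozen into $\iota$ is. For instance, if $a_1=\Re(g)$ was handled by \cref{case2-mainthm}, the set $B^{\infty}$ was picked to realize $\mu(\sptp^{\mc{M}}(g,A^{\infty}|a_1))$ only; nothing prevents some element of $B^{\infty}$ from lying on the wrong side of $b_2$, in which case $b_2$ fails the translated cut conditions at step $2$ and the induction halts. A related omission: you never verify that the case dichotomy is preserved, i.e.\ that $b_i$ is algebraic over $L^{\p}(\Re(HB)\cup\Im(HB))$ exactly when $a_i$ is over the corresponding field on the $\mc{M}$-side, which is needed even to know which of the four cases to run in $\mc{N}$.

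The paper sidesteps both problems by not iterating the four cases at all: it fixes a transcendence basis $\alpha_1,\ldots,\alpha_r$ of $\alpha$ over $\mbb{Q}(GA)$, writes the remaining coordinates as $\sigma_j(g,a,\alpha_1,\ldots,\alpha_r)$, forms $A^{\infty}$ once, and then realizes the entire special type $\sptp^{\mc{M}}(g,A^{\infty}|\alpha)$ --- over the \emph{whole} tuple $\alpha$, with $\alpha$ replaced by $\beta$ --- in a single application of $\kappa$-saturation, using that finite conjunctions of its members, prefixed by $\exists y\in V\,\exists z\in P$, are special formulas in $\sptp^{\mc{M}}(\alpha)=\sptp^{\mc{N}}(\beta)$. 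This chooses all auxiliary witnesses $h$, $B^{\infty}$ simultaneously and compatibly with every coordinate of $\beta$, after which the single map $F(\alpha,g,A^{\infty})^{\rc}\to F(\beta,h,B^{\infty})^{\rc}$ is checked to lie in $\mc{I}$. Your argument could be repaired by making the same move --- at each step choosing the auxiliary witnesses to realize the relevant special type over all of $\bar b$ rather than over the coordinates already processed --- but that is essentially the paper's one-shot construction.
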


\begin{proof} Let $\kappa > \av{\GD}$ and let
\[
\mc{M} := (K,G,A,(\gamma)_{\gamma \in \Gamma},(\delta)_{\delta \in \Delta}), \mc{N} := (L,H,B,(\gamma)_{\gamma \in \Gamma},(\delta)_{\delta \in \Delta})
\]
be $\kappa$-saturated models of $T$. Let $\alpha := (\alpha_1,\ldots,\alpha_m) \in K^m$ and $\beta := (\beta_1,\ldots,\beta_m) \in L^m$ satisfy (in $\mc{M}$ and $\mc{N}$ respectively) the same special formulas in $x$. By \cref{boolean-algebra-fact}, to prove the lemma, it suffices to show that $\tp^{\mc{M}}(\alpha) = \tp^{\mc{N}}(\beta)$. First note that the formulas expressing $[p]G$ and $[p]H$ ($p$ prime) are special formulas. Therefore, $[p]G = [p]H$. Since $\mc{M}, \mc{N} \models T$, $A$ and $B$ are regularly discrete. Therefore, for each prime $p$, $[p]A = p$ by \cref{reg-discrete-lem}. Moreover, the formula expressing "$\gamma$ is an $n$th power in $G$" is a special formula. Therefore, $\gamma$ is an $n$th power in $G$ if and only if $\gamma$ is an $n$th power in $H$ for all $\gamma \in \Gamma$. Therefore, we have a back-and-forth system $\mc{I}$ between $\mc{M}$ and $\mc{N}$ as constructed in \cref{main-thm}. To show that $\tp^{\mc{M}}(\alpha) = \tp^{\mc{N}}(\beta)$, we will find $\iota \in \mc{I}$ such that each $\alpha_j$ is in the domain of $\iota$ and $\iota(\alpha_j) = \beta_j$ for each $j \in \{1,\ldots,m\}$.\\

Throughout, let $\alpha := (\alpha_1,\ldots,\alpha_m)$ and let $\beta := (\beta_1,\ldots,\beta_m)$. Let $F = \mbb{Q}(\RGD)$. Let $\mbb{Q}(GA)(\alpha)$ have transcendence degree $r$ over $\mbb{Q}(GA)$. We may assume that $\{\alpha_1,\ldots,\alpha_r\}$ is a subset of $\{\alpha_1,\ldots,\alpha_m\}$ that is maximal with respect to being algebraically independent over $\mbb{Q}(GA)$. Thus, we have a tuple $g$ of elements of $G$, a tuple $a$ of elements of $A$, and $\lor$-definable functions $\sigma_{r+1}, \ldots, \sigma_m$ such that for each $j \in \{r+1,\ldots,m\}$, 
\[
\sigma_j(g,a,\alpha_1,\ldots,\alpha_r) = \alpha_j.
\]
By a similar argument as in Theorem 3.8 of \cite{LouAyhan}, using the fact that $\alpha$ and $\beta$ satisfy the same special formulas, $\{\beta_1,\ldots,\beta_r\}$ is algebraically independent over $\mbb{Q}(HB)$.\\

We first define $A^{\infty} \subseteq A$ in a similar way as in \cref{case2-mainthm} of this theorem. That is, let $A^{(1)} = \Delta$ and for $j \geq 1$, define $K_j = F(A^{(j)})^{\rc}$ and
\[
A^{(j+1)} = \lambda(K_j(\alpha, g, a)^{\rc}).
\]
Let $A^{\infty} = \bigcup_{j \geq 1} A^{(j)}$. Note that since $\kappa > \av{\GD}$, we have $\av{A^{\infty}} < \kappa$. Let $\av{A^{\infty}} = \rho$. Consider $\sptp^{\mc{M}}(g,A^{\infty}|\alpha)$ as a set of $\lor(P,V,\Gamma,\Delta)$-formulas in the variables $(x_{\eta} : \eta < \rho)$. We will show that $\sptp^{\mc{M}}(g,A^{\infty}|\alpha)$ is finitely satisfiable in $\mc{N}$ when each $\alpha_j$ is replaced by the corresponding $\beta_j$. Let $\mu: \{\alpha_1,\ldots,\alpha_m\} \to \{\beta_1,\ldots,\beta_m\}$ be the function defined by $\mu(\alpha_i) = \beta_i$ for each $i$. Suppose $\phi_1(c,g,\alpha),\ldots,\phi_n(c,g,\alpha)$ are formulas in $\sptp^{\mc{M}}(g,A^{\infty}|\alpha)$, where $\phi_1,\ldots,\phi_n$ are $\lor(P,V,\Gamma,\Delta)$-formulas and $c$ is a tuple of elements of $A^{\infty}$. Then
\[
\phi(\alpha) := \exists y \in V \exists z \in P (\phi_1(y,z,\alpha) \wedge \ldots \wedge \phi_n(y,z,\alpha))
\]
is equivalent to a special formula $\psi(\alpha)$ in $\sptp^{\mc{M}}(\alpha)$. By our assumption that $\sptp^{\mc{M}}(\alpha) = \sptp^{\mc{N}}(\beta)$, we have $\psi(\beta) \in \sptp^{\mc{N}}(\beta)$. By $\kappa$-saturation of $\mc{N}$, there is a subset $B^{\infty}$ of elements of $B$ and tuple $h$ of elements of $H$ such that $\mu(\sptp^{\mc{M}}(g,A^{\infty}|\alpha)) \subseteq \sptp^{\mc{N}}(h,B^{\infty}|\beta)$. In particular,
\[
\mu(\tp_{\lor}^{\mc{M}}(g,A^{\infty}|\alpha)) = \tp_{\lor}^{\mc{N}}(h,B^{\infty}|\beta). \tag{$*$}
\]
Let $\mc{G}$ denote the $\lorm$-structure with universe $G$, with the orientation and multiplication on $G$ interpreted as in \cref{subsec:orag}. Let $\mc{H}$ denote the $\lorm$-structure with universe $H$, again with orientation and multiplication interpreted as in \cref{subsec:orag}. Since $\mu(\sptp^{\mc{M}}(g,A^{\infty}|\alpha)) \subseteq \sptp^{\mc{N}}(h,B^{\infty}|\beta)$, we also have
\[
\tp^{\mc{G}}(g) = \tp^{\mc{H}}(h). \tag{$**$}
\]

Note that each equation $\sigma_j(c,g^{\p},\alpha_1,\ldots,\alpha_r) = \alpha_j$ (for $j \in \{r+1,\ldots,m\}$ and tuples $c$ of elements from $A^{\infty}$ and $g^{\p}$ of elements from $G$) corresponds to a special formula in $\sptp^{\mc{M}}(g,A^{\infty}/\alpha)$. Therefore, for $j \in \{r+1,\ldots,m\}$, we also have
\[
\sigma_j(d,h^{\p},\beta_1,\ldots,\beta_r) = \beta_j
\]
for some tuples $d$ of elements from $A^{\infty}$ and $h^{\p}$ of elements from $h$.\\

Let
\begin{align*}
K^{\p} &= F(\alpha,g,A^{\infty})^{\rc}, \ \ G^{\p} = \Gamma\ip{g}_G, \ \ A^{\p} = A^{\infty},\\
L^{\p} &= F(\beta,h,B^{\infty})^{\rc}, \ \ H^{\p} = \Gamma\ip{h}_H, \ \ B^{\p} = B^{\infty}.
\end{align*}
By $(*)$, we have an ordered field isomorphism $\iota: K^{\p} \to L^{\p}$ which takes $g$ to $h$, $A^{\infty}$ to $B^{\infty}$, and $\alpha$ to $\beta$. We claim that $\iota \in \mc{I}$. By construction, $\iota(A^{\infty}) = B^{\infty}$. We now want to show that $\iota(G^{\p}) = H^{\p}$. To do this, it suffices to show that for all $\gamma \in \Gamma$, $p_1,\ldots,p_k \in \mbb{Z}$, and $n > 0$,
\[
\gamma g_1^{p_1} \ldots g_k^{p_k} \in G^{[n]}\text{ if and only if } \gamma h_1^{p_1} \ldots h_k^{p_k} \in H^{[n]}.
\]
But this follows from $(**)$.\\

We now show that $(K^{\p},G^{\p},A^{\p}) \in \Sub(K,G,A)$ and $(L^{\p},H^{\p},B^{\p}) \in \Sub(L,H,B)$. In particular, we must show that $K^{\p}(i)$ and $\mbb{Q}(GA)$ are free over $\mbb{Q}(G^{\p}A^{\p})$. Let $k = F(\Re(g),A^{\infty})$ and let $X = \{\alpha_1,\ldots,\alpha_r\}$. Note that $k \subseteq \mbb{Q}(\Re(G^{\p}A^{\p}))$. By our choice of $X$, $X$ is algebraically independent over $\mbb{Q}(GA)$. Therefore, we may apply \cref{freeness-cor} to get that $K^{\p}(i)$ and $\mbb{Q}(GA)$ are free over $\mbb{Q}(G^{\p}A^{\p})$. The fact that $K^{\p}$ is closed under $\lambda$ follows by definition of $K^{\p}$. Next we must check that $L^{\p}(i)$ and $\mbb{Q}(HB)$ are free over $\mbb{Q}(H^{\p}B^{\p})$. Let $Y = \{\beta_1,\ldots,\beta_r\}$. As stated previously, $Y$ is algebraically independent over $\mbb{Q}(HB)$. Therefore, a similar proof as before shows that $L^{\p}(i)$ and $\mbb{Q}(HB)$ are free over $\mbb{Q}(H^{\p}B^{\p})$. Lastly, we must check that $L^{\p}$ is closed under $\lambda$. Since $\iota(A^{\infty}) = B^{\infty}$, we have $y = \sigma(\beta_1,\ldots,\beta_r,h,\iota(c))$ for some $\lor$-definable function $\sigma$ and tuple $c$ of elements from $A^{\infty}$. Consider $x := \lambda(\sigma(\alpha,g,c))$. Then
\[
x \leq \sigma(\alpha_1,\ldots,\alpha_r,g,c) < \bdelta x.
\]
Since $\iota$ is an ordered field isomorphism which takes $g$ to $h$ and takes $\alpha_i$ to $\beta_i$ for $i \in \{1,\ldots,r\}$, we have
\[
\iota(x) \leq \sigma(\beta_1,\ldots,\beta_r,h,\iota(c)) < \bdelta\iota(x).
\]
Therefore, $\iota(x) = \lambda(y)$. Since $\iota(x) \in B^{\infty}$, we have $\lambda(y) \in B^{\infty}$.\\

Therefore, $\iota \in \mc{I}$, $\alpha_1,\ldots,\alpha_m$ are in the domain of $\mc{I}$, and $\iota(\alpha_j) = \beta_j$ for each $j$. This finishes the proof of the lemma. \end{proof}

\subsection{Quantifier elimination lemmas}\label{subsec:qe-lemmas}

In this section, we prove some further quantifier elimination lemmas necessary for the proof of Theorem \ref{qr-theorem}. To be precise, we establish a quantifier elimination results for regularly dense oriented abelian groups, and recall another quantifier elimination result for regularly discrete abelian groups. The main tool for the first result is \cref{claim-812} (see also Section 8 of \cite{Ayhan}).\\

In the following two lemmas, let $\mc{L}_1$ be the language of oriented monoids together with constants from an infinite multiplicative subgroup $\Gamma \subseteq \mbb{S}^1$. That is, $\mc{L}_1 = \{ \mc{O}, \cdot, 1,(\gamma)_{\gamma \in \Gamma}\}$, where $\mc{O}$ is a ternary predicate. Fix a function $e$ from the set of prime numbers to $\mbb{N}$. Let $\mc{L}_2 = \mc{L}_1 \cup \{E_n : n > 0\}$, where each $E_n$ is a unary predicate with defining axiom
\[
\sigma_n := \forall z (E_n(z) \leftrightarrow \exists y (z = y^n)).
\]

\begin{lem}\label{qe-lem-1}
Let $T_1(e)$ be the theory of regularly dense oriented abelian groups $G$ containing $\Gamma$ as a subgroup such that $[p]G = p^{e(p)}$ for all primes $p$ and $G_{\tor} = \Gamma_{\tor}$. Let $T_2 = \{\sigma_n : n > 0\}$. Then the $\mc{L}_2$-theory $\Sigma(e) := T_1(e) \cup T_2$ admits quantifier elimination.
\end{lem}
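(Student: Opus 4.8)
The plan is to prove quantifier elimination for $\Sigma(e)$ by a standard back-and-forth argument, leveraging \cref{claim-812} as the main technical engine. First I would recall the well-known criterion: an $\mc{L}_2$-theory eliminates quantifiers if and only if, whenever $\mc{A}, \mc{B} \models \Sigma(e)$ with $\mc{B}$ sufficiently saturated, $\mc{C}$ is a finitely generated (hence small) $\mc{L}_2$-substructure of $\mc{A}$, and $f: \mc{C} \to \mc{B}$ is an $\mc{L}_2$-embedding, then for any $a \in \mc{A}$ the map $f$ extends to an $\mc{L}_2$-embedding on the substructure generated by $\mc{C} \cup \{a\}$. The first point to make is that an $\mc{L}_2$-substructure of a model of $\Sigma(e)$ is exactly a subgroup $C$ containing $\Gamma$ that is \emph{pure} in the ambient group: the predicates $E_n$ force $C \cap \mc{A}^{[n]} = C^{[n]}$, which is the definition of purity from \cref{subsec:abelian-groups}. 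Likewise, $C$ inherits the orientation, so it is an oriented abelian subgroup, and since $\Gamma \subseteq C$ we have $C_{\tor} \supseteq \Gamma_{\tor} = \mc{A}_{\tor} \supseteq C_{\tor}$, so $C_{\tor} = \Gamma_{\tor} = C_{\tor}$; in particular $C$ satisfies the hypotheses on $A^\p$ in \cref{claim-812}.

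Next I would set up the extension step. Given the oriented group isomorphism $f: C \to f(C)$ between pure subgroups (with torsion equal to $\Gamma_{\tor}$) and an element $a \in \mc{A} \setminus C$, the substructure of $\mc{A}$ generated by $C$ and $a$ in the language $\mc{L}_2$ is precisely $C\ip{a}_{\mc{A}}$ in the notation of \cref{subsec:abelian-groups}: closing under the group operations, the constants, and the predicates $E_n$ (i.e. taking $n$th roots that land back in the structure) yields exactly the pure closure $C\ip{a}_{\mc{A}}$. Now apply \cref{claim-812} with $A = \mc{A}$, $B = \mc{B}$, $A^\p = C$, $B^\p = f(C)$: since $\mc{A}$ and $\mc{B}$ are regularly dense oriented abelian groups with $[p]\mc{A} = p^{e(p)} = [p]\mc{B}$ for all primes $p$, and $\mc{B}$ is taken $\kappa$-saturated with $\kappa > |f(C)|$, the lemma produces $b \in \mc{B}$ and an oriented group isomorphism $C\ip{a}_{\mc{A}} \to f(C)\ip{b}_{\mc{B}}$ extending $f$ and sending $a$ to $b$. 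This isomorphism automatically respects each $E_n$ (both sides are pure, and an oriented group isomorphism preserves the relation ``$x$ is an $n$th power'') and fixes $\Gamma$ pointwise since $f$ does and $\Gamma \subseteq C$. Hence it is an $\mc{L}_2$-embedding of the generated substructure into $\mc{B}$, completing the back-and-forth step.

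It remains to check that $\Sigma(e)$ is consistent and that the saturation hypothesis can be met, which is routine: models exist (e.g.\ suitable subgroups of $\mbb{S}^1$ of a real closed field, as discussed in \cref{subsec:orag}), and every model has a $\kappa$-saturated elementary extension still satisfying $\Sigma(e)$ since the axioms are first-order. One also notes that $C\ip{a}_{\mc{A}}$ has cardinality at most $|C| + \aleph_0$, so it is small enough for the chosen $\kappa$. Assembling these pieces via the standard criterion gives quantifier elimination for $\Sigma(e)$.

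I expect the main obstacle to be purely bookkeeping: verifying carefully that the $\mc{L}_2$-substructure generated by $C \cup \{a\}$ is exactly $C\ip{a}_{\mc{A}}$ (rather than something larger or smaller), and that an $\mc{L}_2$-embedding between substructures of models of $\Sigma(e)$ is the same data as an oriented group isomorphism between pure $\Gamma$-containing subgroups with the right torsion — i.e.\ reconciling the model-theoretic notion of substructure/embedding with the algebraic notions feeding into \cref{claim-812}. Once that dictionary is in place, \cref{claim-812} does essentially all the work.
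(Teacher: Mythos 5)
Your argument is essentially identical to the paper's: both reduce quantifier elimination to a one-step extension of $\mc{L}_2$-embeddings between substructures (pure, $\Gamma$-containing, with torsion equal to $\Gamma_{\tor}$), invoke \cref{claim-812} to produce the new oriented-group isomorphism, and then check that it respects the constants from $\Gamma$ and the $E_n$ predicates. One small inaccuracy that does not affect the conclusion: the $\mc{L}_2$-substructure generated by $C$ and $a$ is only the subgroup $\ip{C,a}$, not the pure closure $C\ip{a}_{\mc{A}}$ (relation symbols such as $E_n$ do not close a substructure under roots); but since \cref{claim-812} already supplies an extension defined on the larger $C\ip{a}_{\mc{A}}$, restricting it gives the required $\mc{L}_2$-embedding, exactly as in the paper.
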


\begin{proof}
Let $G,H$ be $\mc{L}_2$-structures such that $G,H \models \Sigma(e)$ and $H$ is $\kappa^+$-saturated for some $\kappa \geq \av{G}$. Let $G^{\p}$ be a proper $\mc{L}_2$-substructure of $G$ and let $f: G^{\p} \to H$ be an embedding. Let $g \in G \setminus G^{\p}$, and let $H^{\p}$ be the substructure of $H$ with $H^{\p} = f(G^{\p})$. We will find $h \in H \setminus H^{\p}$ such that $f$ extends to an $\mc{L}_2$-isomorphism $f^{\p}: G^{\p}\ip{g}_G \to H^{\p}\ip{h}_H$.\\

Note that by definition of $\Sigma(e)$, we have $[p]G = [p]H$ for all primes $p$. Since $G^{\p} \subseteq G$, we also have $\Gamma_{\tor} \subseteq G^{\p}_{\tor} \subseteq G_{\tor} = \Gamma_{\tor}$. Therefore, $G^{\p}_{\tor} = G_{\tor}$. Similarly, $H^{\p}_{\tor} = H_{\tor}$. If $g^{\p} \in G^{\p}$ has an $n$th root in $G$, then $G \models E_n(g^{\p})$. Since $G^{\p} \subseteq G$, we also have $G^{\p} \models E_n(g^{\p})$, so $G^{\p}$ is pure in $G$. Likewise, $H^{\p}$ is pure in $H$.\\

By \cref{claim-812}, there is $h \in H$ such that there is an oriented group isomorphism $f^{\p}: G^{\p}\ip{g}_G \to H^{\p}\ip{h}_H$ extending $f$ and taking $g$ to $h$. Note that since $f^{\p}$ extends $f$, we have $f^{\p}(\gamma) = \gamma$ for all $\gamma \in \Gamma$. Therefore, $f^{\p}$ is an $\mc{L}_1$-isomorphism between $G^{\p}\ip{g}_G$ and $H^{\p}\ip{h}_H$. We now check that $f^{\p}$ is an $\mc{L}_2$-isomorphism.\\

Suppose $G^{\p}\ip{g}_G \models E_n(b)$ for some $b \in G^{\p}\ip{g}_G$. By definition of $E_n$, $b = y^n$ for some $y \in G^{\p}\ip{g}_G$. Since $f^{\p}$ is an oriented group isomorphism, we have $f^{\p}(b) = (f^{\p}(y))^n$. So $H^{\p}\ip{h}_H \models E_n(f^{\p}(b))$. Conversely, if $H^{\p}\ip{h}_H \models E_n(f^{\p}(b))$ for some $b \in G^{\p}\ip{g}_G$, then $G^{\p}\ip{g}_G \models E_n(b)$. Therefore, $f^{\p}$ is an $\mc{L}_2$-isomorphism between $G^{\p}\ip{g}_G$ and $H^{\p}\ip{h}_H$.\\

Since we have found an $\mc{L}_2$-isomorphism $f^{\p}$ properly extending $f$, $\Sigma$ has quantifier elimination.
\end{proof}

\begin{lem}\label{qe-lem-2}
Let $\Phi$ be a set of $\mc{L}_2$-sentences that axiomatizes the class of abelian groups. Every atomic $\mc{L}_2$-formula $\varphi(x_1,\dots,x_n)$ is equivalent in $\Phi$ to a formula with one of the following forms:
\[
\gamma^k x_1^{k_1} \ldots x_n^{k_n} = 1
\]
\[
\mc{O}(\gamma_1^kx_1^{k_1} \ldots x_n^{k_n}, \gamma_2^l y_1^{l_1} \ldots y_m^{l_m}, \gamma_3^p z_1^{i_1} \ldots z_p^{i_p})
\]
\[
E_d(\gamma^k x_1^{k_1} \ldots x_n^{k_n})
\]
where $k_1,\ldots,k_n$, $l_1,\ldots,l_m$, $i_1,\ldots,i_p \in \mbb{Z}$, $k,l,i$ are tuples of elements of $\mbb{Z}$, $d$ is a positive integer, and $\gamma, \gamma_1,\gamma_2,\gamma_3$ are tuples of elements from $\Gamma$.
\end{lem}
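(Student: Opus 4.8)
The plan is a routine term-normalization argument that uses nothing beyond the fact that $\Phi$ proves the abelian group axioms. First I would fix a notational convention. Since $\Phi$ proves $\forall w\,\exists u\,(wu=1)$ together with the cancellation law, any equation or $\mc{O}$- or $E_d$-atom in which a monomial is allowed to carry negative integer exponents is $\Phi$-equivalent to a genuine $\mc{L}_2$-atom: one simply transposes the factors with negative powers to the other side of the equation, or into the appropriate argument of the predicate. So the three displayed forms, in which the exponents range over $\mbb{Z}$, are to be read as abbreviations for such $\mc{L}_2$-formulas, and it is enough to reach a formula of the stated shape up to this convention. It is also convenient to abbreviate $x_1^{k_1}\cdots x_n^{k_n}$ by $x^{\mathbf{k}}$ and, for a tuple $\gamma=(\gamma^{(1)},\ldots,\gamma^{(s)})$ from $\Gamma$ and $k\in\mbb{Z}^s$, to write $\gamma^k$ for $(\gamma^{(1)})^{k_1}\cdots(\gamma^{(s)})^{k_s}$.

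The second step is to put $\mc{L}_2$-terms into normal form. An $\mc{L}_2$-term $t(x_1,\ldots,x_n)$ is a word in the variables, the constant $1$, and finitely many constants $\gamma^{(1)},\ldots,\gamma^{(s)}\in\Gamma$ (the predicates $E_d$ contribute no terms). Using associativity, commutativity and the identity law, $\Phi$ proves $t=\gamma^k x^{\mathbf{k}}$, where $\gamma=(\gamma^{(1)},\ldots,\gamma^{(s)})$, $k\in\mbb{N}^s$ records the total exponent of each constant, and $k_1,\ldots,k_n\in\mbb{N}$. I expect the only point needing care --- and it is the main, if modest, obstacle --- to be the following: one must not try to collapse a product $\gamma^{(i)}\gamma^{(j)}$ of two constants into a single constant, because $\Phi$ knows only the abstract group axioms and nothing about the multiplication table of the infinite group $\Gamma$. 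This is exactly why the constants must be carried along as a tuple $\gamma$ with its own integer exponent vector, and why the statement is phrased that way.

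In the third step I would dispatch the three kinds of atomic formula. If $\varphi$ is $t_1=t_2$, I would normalize $t_1$ and $t_2$ over the joint tuple $\gamma$ of constants appearing in either term (padding absent constants with exponent $0$), obtaining $t_1\equiv_\Phi\gamma^{k}x^{\mathbf{k}}$ and $t_2\equiv_\Phi\gamma^{l}x^{\mathbf{l}}$; then $\Phi$ proves $\varphi\leftrightarrow(\gamma^{k-l}x^{\mathbf{k}-\mathbf{l}}=1)$, which is the first form. If $\varphi$ is $\mc{O}(t_1,t_2,t_3)$, replacing each $t_j$ by its monomial normal form gives the second form verbatim, the variables $y,z$ in the statement being just a relabelling of sub-tuples of $x_1,\ldots,x_n$. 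If $\varphi$ is $E_d(t)$, replacing $t$ by its normal form gives the third form. Since every atomic $\mc{L}_2$-formula has one of these shapes, this would complete the proof; there is no model-theoretic content beyond the term bookkeeping described above.
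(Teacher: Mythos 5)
Your proposal is correct and follows essentially the same route as the paper: an induction on terms using only the abelian group axioms to reach the monomial normal form $\gamma^k x_1^{k_1}\cdots x_n^{k_n}$, after which each kind of atomic formula falls into the stated shape (with $t_1=t_2$ rewritten as a single monomial equal to $1$). Your remarks on not collapsing products of distinct constants and on reading negative exponents via transposition are sensible bookkeeping refinements of the same argument, not a different approach.
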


\begin{proof}
Since any $\mc{L}_2$-structure which models $\Phi$ is an abelian group, one can show by induction on terms that in any model of $\Phi$, every $\mc{L}_2$-term is equal to a term of the form $\gamma^k x_1^{k_1} \ldots x_n^{k_n}$. Thus, it is clear that every $\mc{L}_2$-atomic formula must be equivalent in $\Phi$ to a formula with one of the above forms.
\end{proof}

We next recall some results for regularly discrete abelian groups. In the following two lemmas, let $\mc{L}_3 = \{\cdot,<,1,\bdelta\} \cup \{D_n : n > 0\}$, where each $D_n$ is a unary predicate. For $n > 0$, let
\[
\tau_n := \forall x (D_n(x) \leftrightarrow \exists y (x = y^n)).
\]

\begin{lem}\label{qe-lem-3}
Let $T_3$ be the $\mc{L}_3$-theory of regularly discrete abelian groups $A$ with $\bdelta$ the smallest element larger than 1, together with the set of sentences $\{\tau_n : n > 0\}$. Then $T_3$ admits quantifier elimination.
\end{lem}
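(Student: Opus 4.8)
The plan is to verify the standard back-and-forth criterion for quantifier elimination: given $A, B \models T_3$ with $B$ being $|A|^{+}$-saturated, an $\mc{L}_3$-substructure $A'$ of $A$, an $\mc{L}_3$-embedding $f \colon A' \to B$, and an element $a \in A \setminus A'$, I want to extend $f$ to an $\mc{L}_3$-embedding with $a$ in its domain. Since an ordered abelian group is torsion-free, each $D_n$ is interpreted as the subgroup of $n$th powers, and by the same sort of induction on terms as in \cref{qe-lem-2} every atomic $\mc{L}_3$-formula is, modulo $T_3$, a comparison or equality of monomials $\bdelta^{j} x_1^{k_1} \cdots x_m^{k_m}$ or a divisibility statement $D_n(\bdelta^{j} x_1^{k_1} \cdots x_m^{k_m})$. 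The first step is to read off from this that the quantifier-free $\mc{L}_3(A')$-type of $a$ is pinned down by two pieces of data: (a) the cut of $a$ over the divisible hull of $A'$, i.e.\ the order relations between $a^{q}$ and the monomials $(a')^{p}\bdelta^{l}$ for $a' \in A'$ and $p,l \in \mbb{Z}$, $q > 0$; and (b) for each $n > 0$ the unique residue $c_n \in \{1,\dots,n\}$ with $a \in \bdelta^{c_n} A^{[n]}$. That (b) is well defined, and that the family $(c_n)_n$ is compatible, is exactly the content of \cref{reg-discrete-lem}, which identifies $A/A^{[n]}$ with the cyclic group of order $n$ generated by $\bdelta$.

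The second step is to realize the matching data in $B$. I would split into cases according to whether $a^{q} \in A'$ for some $q > 0$. If so, then $D_q(a^q)$ holds of the element $a^q$ of $A'$, hence $D_q(f(a^q))$ holds in $B$; by torsion-freeness $f(a^q)$ has a unique $q$th root $b \in B$, and $a \mapsto b$ extends $f$ to a group embedding which is automatically order-preserving — since the $q$th-root function is monotone, $b$ realizes over the divisible hull of $f(A')$ the same cut $a$ realizes over that of $A'$ — and $D_n$-preserving, as all the congruence data of $b$ are forced by those of $f(a^q)$. If instead $a^{q} \notin A'$ for every $q$, I would realize in $B$, using $|A|^{+}$-saturation, the partial type over $f(A')$ consisting of the $f$-image of the cut (a) together with the formulas $D_n(x\bdelta^{-c_n})$ for all $n$. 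For finite satisfiability, a finite fragment confines $x$ to an interval $(u,w)$ of $B$ with endpoints in the divisible hull of $f(A')$ (or to an unbounded interval) together with a single congruence $x \in \bdelta^{c_N}B^{[N]}$; because $\bdelta$ is the smallest element larger than $1$, consecutive elements of any finite interval of $B$ differ by $\bdelta$, so by \cref{reg-discrete-lem} multiplication by $\bdelta$ permutes the cosets of $B^{[N]}$ cyclically and any interval of $B$ with at least $N$ elements meets the coset $\bdelta^{c_N}B^{[N]}$. One then checks, using once more the minimality of $\bdelta$ and the shape of $A/A^{[n]}$, that in this case the interval $(u,w)$ is long enough; saturation produces a realization $b$, and the matching of cuts and congruences makes $a \mapsto b$ an $\mc{L}_3$-embedding extending $f$.

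I expect the main obstacle to be precisely the structural fact invoked twice above: that a ``short'' gap of $a$ over $A'$ can occur only when $a^{q} \in A'$ for some $q$ — equivalently, that in a regularly discrete ordered abelian group any nonempty bounded open interval between two elements of the divisible hull of a substructure containing $\bdelta$ is either infinite or a block of consecutive $\bdelta$-multiples of an element of that divisible hull. This is essentially Zakon's analysis of regularly discrete ordered abelian groups \cite{Zakon}; once it is available, the back-and-forth above goes through and establishes quantifier elimination for $T_3$.
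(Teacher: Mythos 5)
Your proposal is correct in substance but takes a genuinely different route from the paper. The paper's proof of \cref{qe-lem-3} is a two-line reduction: by \cref{reg-discrete-lem}, $T_3$ proves $\forall a\,(D_n(a\bdelta)\vee\ldots\vee D_n(a\bdelta^n))$ for every $n$, so a model of $T_3$ is exactly a $\mbb{Z}$-group written multiplicatively, with $\bdelta$ as the least element greater than $1$ and the $D_n$ as divisibility predicates; quantifier elimination is then the classical Presburger/$\mbb{Z}$-group theorem, which the paper simply cites. You instead reprove that theorem from scratch by a back-and-forth: reducing atomic formulas to monomial comparisons and congruences, splitting on whether some power $a^q$ already lies in $A'$, taking roots in the first case, and realizing a cut-plus-congruence type via saturation and regular discreteness in the second. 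Your version is self-contained and makes visible exactly which axioms of $T_3$ enter where; the paper's buys brevity at the cost of an external citation. Both arguments share the same underlying mechanism (the back-and-forth behind Presburger QE), so neither is stronger than the other.

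The one point you defer --- that when $a^q\notin A'$ for all $q>0$, every interval cut out over $A'$ by conditions $a^{q_1}>u'$, $a^{q_2}<w'$ is infinite --- does hold, and more cheaply than an appeal to Zakon. If that interval $I\subseteq A$ were finite, let $x_0$ be its least element, so $a=x_0\bdelta^{s}$ for some $s\ge 0$ (consecutive elements of $A$ differ by exactly $\bdelta$, since $\bdelta$ is the least element greater than $1$), and minimality of $x_0$ gives $u'<x_0^{q_1}\le u'\bdelta^{q_1}$. Hence $u'\bdelta^{sq_1}<a^{q_1}\le u'\bdelta^{(s+1)q_1}$, and the only elements of $A$ in that range are $u'\bdelta^{m}$ with $sq_1<m\le (s+1)q_1$, all of which lie in $A'$; so $a^{q_1}\in A'$, contradicting the case hypothesis. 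With that inserted --- and granting the convention, shared by the paper in \cref{qe-lem-2} and \cref{qe-lem-4}, of treating substructures as subgroups even though $\mc{L}_3$ lacks an inverse symbol --- your back-and-forth goes through.
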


\begin{proof}
By \cref{reg-discrete-lem}, for each $n \geq 1$ and each $a \in A$, there is $i \in \{1,\ldots,n\}$ such that $a \bdelta^i \in A^{[n]}$. Therefore, the theory $T_3$ includes the sentence
\[
\forall a (D_n(a\bdelta) \vee \ldots \vee D_n(a\bdelta^n))
\]
for each $n \geq 1$. Since the theory of $\mbb{Z}$-groups admits quantifier elimination, $T_3$ admits quantifier elimination.
\end{proof}

From this, the following lemma follows easily by a similar proof as \cref{qe-lem-2}.

\begin{lem}\label{qe-lem-4}
Let $\Phi^{\p}$ be a set of $\mc{L}_3$-sentences that axiomatizes the class of abelian groups. Every $\mc{L}_3$-atomic formula $\varphi(x_1,\dots,x_n)$ is equivalent in $\Phi^{\p}$ to a formula with one of the following forms:
\[
\delta^k x_1^{k_1} \ldots x_n^{k_n} = 1
\]
\[
\delta^k x_1^{k_1} \ldots x_n^{k_n} < 1
\]
\[
D_d(\delta^k x_1^{k_1} \ldots x_n^{k_n})
\]
where $k_1,\ldots,k_n \in \mbb{Z}$, $k$ is a tuple of elements of $\mbb{Z}$, $d > 0$, and $\delta$ is a tuple of elements from $\bdelta^{\mbb{Z}}$.
\end{lem}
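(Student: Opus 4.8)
The plan is to run the proof of \cref{qe-lem-2} essentially verbatim, the only new features being the linear order and the constant $\bdelta$; as there, the whole content is a normal form for terms. First I would show, by induction on the complexity of an $\mc{L}_3$-term $t(x_1,\ldots,x_n)$, that $\Phi^{\p}$ proves $t = \bdelta^k x_1^{k_1}\cdots x_n^{k_n}$ for suitable $k,k_1,\ldots,k_n \in \mbb{Z}$, where negative exponents are read via the group inverse (which exists in every model of $\Phi^{\p}$, since $\Phi^{\p}$ axiomatizes abelian groups). The base cases are the variables, where $x_i = \bdelta^0 x_1^0\cdots x_i^1\cdots x_n^0$, and the two constants, where $1 = \bdelta^0 x_1^0\cdots x_n^0$ and $\bdelta = \bdelta^1 x_1^0\cdots x_n^0$. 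For the inductive step, if $t_1 = \bdelta^{k}\prod_i x_i^{k_i}$ and $t_2 = \bdelta^{l}\prod_i x_i^{l_i}$, then commutativity and associativity of $\cdot$ give $t_1\cdot t_2 = \bdelta^{k+l}\prod_i x_i^{k_i+l_i}$, which is again of the required shape.

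Next I would observe that every atomic $\mc{L}_3$-formula has one of the forms $t_1 = t_2$, $t_1 < t_2$, or $D_d(t)$ for $\mc{L}_3$-terms $t_1,t_2,t$ and some $d>0$. Putting the terms in the normal form above and writing $t_1 = \bdelta^{k}\prod_i x_i^{k_i}$, $t_2 = \bdelta^{l}\prod_i x_i^{l_i}$: the equation $t_1 = t_2$ is equivalent in $\Phi^{\p}$ to $\bdelta^{k-l}\prod_i x_i^{k_i-l_i} = 1$, obtained by multiplying both sides by $t_2^{-1}$ and using the abelian group axioms, which is the first listed form; the inequality $t_1 < t_2$ is equivalent to $\bdelta^{k-l}\prod_i x_i^{k_i-l_i} < 1$ because the order in models of $\Phi^{\p}$ is translation invariant, which is the second form; and $D_d(t)$ is already of the third form once $t$ is put in normal form. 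Since these exhaust the atomic $\mc{L}_3$-formulas, this proves the lemma.

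I do not expect a real obstacle here: the substantive point is the routine term induction, and the only step deserving a word of care is "moving everything to one side" in the equation and inequality cases, which uses the group inverse and translation invariance of $<$ — both available in every model of $\Phi^{\p}$. This is exactly the same (minor) point that already occurs in the proof of \cref{qe-lem-2}.
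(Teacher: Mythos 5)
Your proof is correct and follows the same term-normalization route the paper intends: the paper simply remarks that \cref{qe-lem-4} ``follows easily by a similar proof as \cref{qe-lem-2}'', i.e., normalize $\mc{L}_3$-terms by induction and then plug the normal forms into the three shapes of atomic formula. Your extra observation that the inequality case uses translation invariance of $<$ is apt, and in fact makes explicit a minor imprecision in the paper's own statement (as written, $\Phi^{\p}$ axiomatizes only abelian groups and places no constraint on $<$, so the $<1$ normalization needs the ordered-group compatibility axiom to be assumed as part of $\Phi^{\p}$).
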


\subsection{Proof of Theorem \ref{qr-theorem}}\label{subsec:qr-thm} By \cref{qr-main-lemma}, it suffices to show that subsets of $K^m$ defined by special formulas have the desired form.
Let
\[
\psi(x) := \exists y \exists z (V(y) \wedge P(z) \wedge \theta_V^1(y) \wedge \theta_P^2(z) \wedge \phi(x,y,z))
\]
be a special formula where $y = (y_1,\ldots,y_n)$ is a tuple of variables and $$z = ((z_{11},z_{12}),\ldots,(z_{j1},z_{j2}))$$ is a tuple of pairs of variables.\\

By \cref{qe-lem-3} and \cref{qe-lem-4}, the set $\{a \in A^n: A \models \theta_V^1(a)\}$ is a Boolean combination of subsets of $A^n$, each of which has one of the following forms:
\[
\{a \in A^n : \delta^k a_1^{k_1} \ldots a_n^{k_n} = 1\} \text{ or }
\]
\[
\{a \in A^n : \delta^k a_1^{k_1} \ldots a_n^{k_n} < 1\} \text{ or }
\]
\[
\{a \in A^n : \delta^k a_1^{k_1} \ldots a_n^{k_n} \in A^{[d]}\}
\]
where $k_1,\ldots,k_n \in \mbb{Z}$, $\delta \in \Delta$, $k$ is a tuple of elements from $\mbb{Z}$, and $d$ is a positive integer. Since $\mc{M} \models T$, $[d]A$ is finite. Therefore, a set of the form
\[
A^n \setminus \{a \in A^n : \delta^k a_1^{k_1} \ldots a_n^{k_n} \in A^{[d]}\}
\]
is equal to a finite union of sets of the form
\[
\{a \in A^n : \delta^k a_1^{k_1} \ldots a_n^{k_n} \in a^{\p}A^{[d]}\}
\]
where $a^{\p} \in A$. Therefore, $\psi(x)$ is equivalent in $\mc{M}$ to a formula $\psi^{\p}(x)$ with
\[
\psi^{\p}(x) := \exists y^{\p} \exists z (V(y^{\p}) \wedge P(z) \wedge \theta_P^2(z) \wedge \phi^{\p}(x,y^{\p},z))
\]
where $y^{\p}$ is a tuple of variables (extending $y$) and $\phi^{\p}$ is an $\lor(A)$-formula.\\

Now consider the subgroup $G \subseteq \mbb{S}^1(K)$. Since we assume $[p]G$ is finite for each prime $p$, there is a function $e$ from the set of prime numbers to $\mbb{N}$ such that $[p]G = p^{e(p)}$ for each prime $p$. Let $\mc{L}_1$ and $\mc{L}_2$ be the languages defined before \cref{qe-lem-1}, and let $\Sigma(e)$ be the set of $\mc{L}_2$-sentences defined in \cref{qe-lem-1}. We can make $G$ into an $\mc{L}_2$-structure $\mc{G}$ such that $\mc{G} \models \Sigma(e)$ by taking $\mc{O}$ to be the orientation on $G$ inherited from $\mbb{S}^1(K)$. Now note that there is an $\mc{L}_2$-formula $\theta(z)$ such that for all $g \in G$, $\mc{M} \models \theta_P^2(g)$ if and only if $\mc{G} \models \theta(g)$. By \cref{qe-lem-1} and \cref{qe-lem-2}, $\theta$ is equivalent in $\mc{G}$ to a Boolean combination of subsets of $G^j$, each of which has one of the following forms:
\[
\gamma^k x_1^{k_1} \ldots x_n^{k_n} = 1 \text{ or }
\]
\[
\mc{O}(\gamma_1^k x_1^{k_1} \ldots x_n^{k_n}, \gamma_2^k y_1^{k_1} \ldots y_m^{k_m}, \gamma_3^k z_1^{i_1} \ldots z_p^{i_p}) \text{ or } 
\]
\[
E_d(\gamma^k x_1^{k_1} \ldots x_n^{k_n})
\]
where $k_1,\ldots,k_n,l_1,\ldots,l_m,i_1,\ldots,i_p \in \mbb{Z}$, $k,l,i$ are tuples of elements of $\mbb{Z}$, $d$ is a positive integer, and $\gamma,\gamma_1,\gamma_2,\gamma_3$ are tuples of elements from $\Gamma$. Since we assume $[p]G$ is finite for each prime $p$, a set of the form
\[
G^j \setminus \{g \in G^j : \gamma^k g_1^{k_1} \ldots g_n^{k_n} \in G^{[d]}\}
\]
is equal to a finite union of sets of the form
\[
\{g \in G^j : \gamma^k g_1^{k_1} \ldots g_n^{k_n} \in g^{\p}G^{[d]}\}
\]
where $g^{\p} \in G$. Therefore, there is an existential $\mc{L}_1(G)$-formula $\theta^{\p}(z)$ such that for all $g \in G$, $\mc{G} \models \theta(g) \leftrightarrow \theta^{\p}(g)$. Therefore, $\psi^{\p}(x)$ is equivalent in $\mc{M}$ to a formula $\psi^{\pp}(x)$ with
\[
\psi^{\pp}(x) := \exists y^{\p} \exists z^{\p} (V(y^{\p}) \wedge P(z^{\p}) \wedge \phi^{\pp}(x,y^{\p},z^{\p}))
\]
where $z^{\p}$ is a tuple of pairs of variables extending $z$ and $\phi^{\pp}$ is an $\lor(A,G)$-formula.\\

Note that $\phi^{\pp}$ may not be quantifier free; however, by quantifier elimination for real closed fields, we can find a quantifier free $\lor(K)$-formula $\chi(x,y^{\p},z^{\p})$ such that
\[
\mc{M} \models \forall x (\exists y^{\p} \exists z^{\p} (V(y^{\p} \wedge P(z^{\p}) \wedge \phi^{\pp}(x,y^{\p},z^{\p})) \leftrightarrow \exists y^{\p} \exists z^{\p} (V(y^{\p}) \wedge P(z^{\p}) \wedge \chi(x,y^{\p},z^{\p}))).
\]
Therefore, $\psi$ is equivalent in $\mc{M}$ to a formula of the desired form.

\section{Definable open sets}\label{subsec:opendef}

In this section, we prove the statement about open definable sets in \cref{main-thm-statement}. Let $\Gamma$ be a finite rank subgroup of $\mbb{S}^1$ which is dense in $\mbb{S}^1$ and let $\Delta$ be as before. Let $\mc{L}^{*} = \lor(P,V)$ and let $\mc{L} = \lor(V)$. Let $\mc{R}^* = (\bar{\mbb{R}},\Gamma,\Delta)$ and let $\mc{R} = (\bar{\mbb{R}},\Delta)$. Using Corollary 3.1 from Boxall and Hieronymi \cite{BH}, we will show that every open set definable in $\mc{R}^*$ is already definable in $\mc{R}$. Throughout this section, ``open" will mean open in the usual order topology.\\

Let $\mc{M}^*$ be a $\kappa$-saturated, strongly $\kappa$-homogeneous elementary extension of $\mc{R}^*$ (where $\kappa = \av{\mbb{R}}^+$) with $\mc{M}^* = (M,G,A)$. Let $\mc{M}$ be the reduct of $\mc{M}^*$ to $\mc{L}$, so that $\mc{M} = (M,A)$. Let $\mc{N}$ be the reduct of $\mc{M}$ to $\lor$. Let $C$ be a countable subset of $M$. Note $A$ has a smallest element larger than 1 which we again denote by $\bdelta$. Moreover, since $\GD$ has the Mann property, $GA$ also has the Mann property.\\

We need to fix some notation. %For $b=(b_{11},b_{12},\ldots,b_{n1},b_{n2}) \in M^{2n}$, we write $B_{b}$ for $(b_{11},b_{12}) \times \ldots \times (b_{n1},b_{n2})$.
 For $S \subseteq M$, denote $\lor$-definable closure of $S$ in $\mc{M}$ by  $\dcl_{\lor}^{\mc{M}}(S)$. % if there is an $\lor$-formula $\varphi$ and $s_1,\ldots,s_n \in S$ such that $\mc{M} \models \varphi(x,s_1,\ldots,s_n)$ %and $x$ is the only element of $M$ satisfying $\varphi(y,s_1,\ldots,s_n)$.
 By Pillay and Steinhorn \cite{pillay-steinhorn-I}, the closure operator $\dcl_{\lor}^{\mc{M}}$ has the exchange property. That is, for $a,b \in M$ and $S \subseteq M$, if $a \in \dcl_{\lor}^{\mc{M}}(S \cup \{b\})$ and $a \notin \dcl_{\lor}^{\mc{M}}(S)$, then $b \in \dcl_{\lor}^{\mc{M}}(S \cup \{a\})$.

\begin{thm} Every open set definable in $(\bar{\mbb{R}},\Gamma,\Delta)$ is definable in $(\bar{\mbb{R}},\Delta)$. \end{thm}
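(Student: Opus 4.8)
The plan is to apply Corollary 3.1 of Boxall--Hieronymi \cite{BH}, which (in the relevant formulation) says that if $\mc{M}^*$ is an expansion of an o-minimal structure $\mc{M}$ by a predicate, and every $\mc{L}^*$-definable subset of $M$ that is closed under the $\dcl$-closure of a countable set is already $\mc{L}$-definable, then every open $\mc{L}^*$-definable set is $\mc{L}$-definable. So the task reduces to the following: given a countable $C \subseteq M$ and a set $X \subseteq M^n$ definable in $\mc{M}^*$ with parameters, if $X$ is ``$C$-closed'' in the sense of that corollary (i.e. $X$ is a union of $\dcl_{\lor}^{\mc{M}}(Cb)$-classes, or whatever the precise hypothesis is), then $X$ is definable in $\mc{M} = (M,A)$. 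I would verify that $\mc{R}$ is indeed o-minimal: $(\bar{\mbb R},\Delta)$ with $\Delta = \bdelta^{\mbb Z}$ is not o-minimal, but it does have the relevant tameness ($\dcl_{\lor}^{\mc{M}}$ has exchange, as noted via Pillay--Steinhorn), so I should double-check which version of the Boxall--Hieronymi criterion is being invoked and confirm its hypotheses hold for $\mc{M}$ (smallness of $GA$, which follows from the Mann property as in \cref{density-lemma}).

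The core model-theoretic step is the reduction of definability from $\mc{M}^*$ down to $\mc{M}$. Here I would use the quantifier reduction already established: by \cref{qr-theorem} (applicable since $\Gamma$ finite rank gives $[p]G$ finite), every subset of $M^n$ definable in $\mc{M}^*$ is a Boolean combination of sets of the form $\{x : \exists y \in A^k \,\exists z \in G^l \;\phi(x,y,z)\}$ with $\phi$ a quantifier-free $\lor$-formula. So it suffices to analyze such a ``one-quantifier-block'' set $X$ and show that if $X$ is $C$-closed then the $G$-part can be eliminated, leaving a set definable in $(M,A)$. The mechanism: suppose $a \in M^n$ lies in $X$, witnessed by some $g \in G^l$ (and $b \in A^k$); I want to show membership of $a$ in $X$ depends only on $\tp_{\mc{M}}(a|C)$. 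Using $\kappa$-saturation and strong $\kappa$-homogeneity, if $a' \equiv_{\mc{M}} a$ over $C$, I want to move the witness $g$ to a witness $g'$ for $a'$. The key input is the back-and-forth system $\mc{I}$ from the proof of \cref{main-thm}/\cref{qr-main-lemma}: an $\mc{L}_{or}$-elementary map over $C$ sending $a$ to $a'$ should extend (possibly after adjoining $A^{\infty}$-type closures and handling the $G$-generators via \cref{claim-812}) to an element of $\mc{I}$, and the image of the $G$-witness gives the required $g'$.

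The main obstacle, as I see it, is exactly this transfer of the existential $G$-witness across an $\lor(C)$-elementary map between tuples of $M$. The subtlety is that $a'$ having the same $\lor$-type over $C$ as $a$ is much weaker than having the same $\lor(P,V,\Gamma,\Delta)$-type, so a priori there is no reason a witnessing $g \in G$ for $a$ should correspond to anything for $a'$. The resolution must use the $C$-closedness hypothesis on $X$: because $X$ is closed under $\dcl_{\lor}^{\mc{M}}$ over $C$ and $G$ is ``small'' (so generic elements of $M$ are $\lor$-independent from $GA$ over $C$), one shows that the witness $g$ can be taken $\lor$-independent from $a$ over $C$, or else $a \in \dcl_{\lor}^{\mc{M}}(C, \text{(finitely many elements of }GA))$ and one argues directly; then saturation produces a suitable $g'$ realizing the right partial type over $Ca'$. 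I would lean on \cref{density-lemma} and the smallness of $GA$ here, mirroring the treatment of \cref{case4-mainthm} in the proof of \cref{main-thm}, where a point outside $K^{\p}(\Re(GA)\cup\Im(GA))^{\rc}$ is handled by a density/saturation argument. Once the witness transfer is in place, the set $X \cap \dcl$-closure is cut out by an $\lor(A)(C)$-condition, hence definable in $\mc{M}$, and Boxall--Hieronymi delivers the theorem.
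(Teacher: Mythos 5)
Your starting point is correct --- the paper does invoke Corollary~3.1 of Boxall--Hieronymi, uses $\kappa$-saturated strongly $\kappa$-homogeneous $\mc{M}^*$, the exchange property via Pillay--Steinhorn, and smallness of $GA$ via the Mann property --- but your reading of the Boxall--Hieronymi criterion is off in a way that derails the rest of the plan. That corollary does not ask you to show that ``$C$-closed'' definable sets are $\mc{L}$-definable. Instead it asks you to \emph{exhibit a dense set} $D_n \subseteq M^n$ for each $n$ and verify three things about it: (1) $D_n$ is dense; (2) if $\tp^{\mc{M}}(a|C)$ for $a\in D_n$ is realized in an open $U$ then it is realized in $U\cap D_n$; (3) for $a,b\in D_n$, if $b\models\tp^{\mc{M}}(a|C)$ then $b\models\tp^{\mc{M}^*}(a|C)$. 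The crucial idea you are missing is the choice of $D_n$: the set of $n$-tuples that are $\dcl_{\lor}^{\mc{M}}$-independent over $G\cup A\cup C$.

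This resolves exactly the obstacle you flagged. You worried, correctly, that $\tp_{\lor(V)}(a|C)=\tp_{\lor(V)}(a'|C)$ is far too weak to transfer a $G$-witness from $a$ to $a'$ in general. Indeed it is false for general tuples: a coordinate can lie in $G$, or be algebraic over $GA$, and the $\lor(V)$-type will not see this. The fix is \emph{not} a closure hypothesis on the definable set; it is that condition (3) is only required on $D_n$, i.e.\ on tuples that are generic over $GA$ and $C$. For such tuples, both $a$ and $b$ are algebraically independent over $\mbb{Q}(GA)$, and one can run the same construction as in the proof of \cref{qr-main-lemma}: build $A^\infty$, pull $B^\infty$ across by saturation, and get an element of the back-and-forth system $\mc{I}$ from \cref{main-thm} that fixes $C$ pointwise and sends $a$ to $b$. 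This gives $\tp^{\mc{M}^*}(a|C)=\tp^{\mc{M}^*}(b|C)$ directly. There is no need for \cref{qr-theorem} or for a ``one-quantifier-block'' reduction to eliminate the $G$-part; the back-and-forth system does all the work, and the paper does not invoke the quantifier-reduction theorem here.

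You also omit condition (2) entirely, and it is not automatic. One must show that for $a\in D_n$ and any open $U$ in which $\tp^{\mc{M}}(a|C)$ is realized, there is a realization in $U\cap D_n$. The paper handles this by quoting Tychonievich's result that every $\lor(V)(C)$-formula is equivalent in $\mc{M}=(M,A)$ to one of the form $\exists y\in V^m\,\theta(y,x)$ with $\theta$ an $\lor(C)$-formula, and then using o-minimal cell decomposition in $\mc{N}$: the genericity of $a$ over $G\cup A\cup C$ forces the relevant cell containing a realization $d$ to be open (otherwise a coordinate of $a$ would be $\dcl_{\lor}^{\mc{M}}$-dependent on $A\cup C$), so the formula is realized on an open neighborhood of $d$, which meets $D_n$ by density. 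This step is a genuine ingredient, not bookkeeping, and your proposal has no analogue of it.
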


\begin{proof}
For $n \geq 1$, let
\[
D_n = \{(a_1,\ldots,a_n) \in M^n : \{a_1,\ldots,a_n\} \text{ is } \dcl_{\lor}^{\mc{M}}\text{-independent over } G \cup A \cup C\}.
\]
Since the topology on $M$ is the order topology, Assumption (I) in \cite{BH} is satisfied. To prove the theorem, we will apply Corollary 3.1 in \cite{BH}. To do this, we must also check that for all $n \geq 1$,
\begin{enumerate}
\item $D_n$ is dense in $M^n$;
\item for every $a \in D_n$ and every open set $U \subseteq M^n$, if $\tp^{\mc{M}}(a|C)$ is realized in $U$, then $\tp^{\mc{M}}(a|C)$ is realized in $U \cap D_n$;
\item for every $a,b \in D_n$, if $b$ realizes $\tp^{\mc{M}}(a|C)$, then $b$ realizes $\tp^{\mc{M}^*}(a|C)$.
\end{enumerate}

(1): Let $U := (c_{11},c_{12}) \times \ldots \times (c_{n1},c_{n2})$ be a basic open set in $M^n$. We want to show that there exists $a \in M^n$ with $a \in U \cap D_n$.\\

Let $S = G \cup A \cup C$. We first find $a_1 \in M$ such that $a_1 \in (c_{11},c_{12}) \setminus \dcl_{\lor}^{\mc{M}}(S)$. We use $\kappa$-saturation of $\mc{M}$ to show that there is $x \in M$ such that $x \in (c_{11},c_{12}) \setminus \dcl_{\lor}^{\mc{M}}(S)$.\\

Let $f_1,\ldots,f_l$ be $\lor(C)$-definable functions from $M^{2n}$ to $M$. For each $i$, let $X_i := \{f_i(ga) : ga \in (GA)^n \}$. By \cref{density-lemma}, the set $M \setminus \bigcup_{i=1}^l X_i$ is dense in $M$. In particular, there is $y \in (c_{11},c_{12})$ such that for all $i$ and all tuples $ga \in (GA)^n$, $y \neq f_i(ga)$. By $\kappa$-saturation of $\mc{M}$, there is $a_1 \in M$ with $a_1 \in (c_{11},c_{12}) \setminus \dcl_{\lor}^{\mc{M}}(S)$.\\

We now want to show that  there is $a_2 \in M$ such that $(a_1,a_2) \in (c_{11},c_{12}) \times (c_{21},c_{22})$ and $(a_1,a_2)$ is $\dcl_{\lor}^{\mc{M}}$-independent over $S$. By the exchange property of $\dcl_{\lor}^{\mc{M}}$, it is enough to find $a_2 \in (c_{21},c_{22}) \setminus \dcl_{\lor}^{\mc{M}}(S \cup \{a_1\})$. But such an $a_2$ exists by a similar proof as in the previous paragraph. Continuing in this way, we can find $a_1,\ldots,a_n \in M$ such that $(a_1,\ldots,a_n) \in D_n \cap U$.\\

(2): Let $a \in D_n$ and $U \subseteq M^n$, and suppose $\tp^{\mc{M}}(a|C)$ is realized in $U$. Let $d$ be a realization of $\tp^{\mc{M}}(a|C)$ in $U$. We will show that every finite subset of $\tp^{\mc{M}}(a|C)$ is realized in $U \cap D_n$. By $\kappa$-saturation of $\mc{M}$, this suffices to prove that $\tp^{\mc{M}}(a|C)$ is satisfied in $U \cap D_n$.\\

Let $\varphi_1(x),\ldots,\varphi_n(x) \in \tp^{\mc{M}}(a|C)$ (so $\varphi_1,\ldots,\varphi_n$ are $\mc{L}(C)$-formulas). By Corollary 4.1.7 in Tychonievich \cite{tycho}, for each $i$, $\varphi_i(x)$ is equivalent to a formula of the form $\exists y \in V^{m_i} \theta_i(y,x)$. where $\theta_i$ is an $\lor(C)$-formula. We claim that for each $i$, the set
\[
A_i := \{x \in M^n : \mc{M} \models \exists y \in V^{m_i} \theta_i(y,x)\}
\]
has interior, and its interior contains $d$.\\

First note that since $\tp^{\mc{M}}(a|C) = \tp^{\mc{M}}(d|C)$, we have $d \in A_i$. Fix $\alpha \in A^{m_i}$ such that $\mc{M} \models \theta_i(\alpha,d)$, and let
$B_i(\alpha) = \{x \in M^n : \mc{M} \models \theta_i(\alpha,x)\}$.
Since $\mc{N}$ is o-minimal, let $\mc{D}$ be a decomposition of $M^n$ into cells which partitions $B_i(\alpha)$, and let $X$ be the cell in this decomposition which contains $d$. Using the notation of \cite{tame-topology}, let $X$ be an $(i_1,\ldots,i_n)$-cell. We will show that $X$ is an open cell. Suppose not; then for some $j \in \{1,\ldots,n\}$, we must have $i_j = 0$. It can be shown that since $B_i(\alpha)$ is definable by an $\lor(A \cup C)$-formula, $X$ is also definable by an $\lor(A \cup C)$-formula. Since $i_j = 0$, there is an $\lor(C)$-definable function $f$ and parameters $\beta \in A^l$ such that $f(\beta,d_1,\ldots,d_{j-1}) = d_j$. Consider the $\mc{L}(C)$-formula
\[
\exists y_1 \in V \ldots \exists y_l \in V \ f(y_1,\ldots,y_l,x_1,\ldots,x_{j-1}) = x_j.
\]
This formula is in $\tp^{\mc{M}}(d|C)$, hence it is also in $\tp^{\mc{M}}(a|C)$. But then there are parameters $\beta^{\p} \in A^l$ such that $f(\beta^{\p},a_1,\ldots,a_{j-1}) = a_j$. Therefore, $a_j \in \dcl_{\lor}^{\mc{M}}(S)$, contradicting our assumption that $a\in D_n$. Therefore, $X$ is an open cell containing $d$, so $d \in \intr(A_i)$.\\

Now let $V = U \cap \left( \bigcap_{i=1}^n \intr(A_i) \right)$. Since $V$ is a finite intersection of open sets, $V$ is open. Moreover, since $d \in \intr(A_i)$ for each $i$ and since $d \in U$, $V$ is nonempty. Since $D_n$ is dense in $M^n$, there is $b \in M$ such that $b \in V \cap D_n$. Since $b \in \intr(A_i) \subseteq A_i$ for each $i$, we have $\mc{M} \models \varphi_i(b)$ for each $i$. Therefore, $\tp^{\mc{M}}(a|C)$ is finitely satisfiable in $U \cap D_n$. By $\kappa$-saturation of $\mc{M}$, it is satisfiable in $U \cap D_n$.\\

(3): Let $a,b \in D_n$ and suppose that $b$ satisfies $\tp^{\mc{M}}(a|C)$. We want to show that $b$ satisfies $\tp^{\mc{M}^*}(a|C)$. Note that since $\mc{M}^* \succeq (\bar{\mbb{R}},\Gamma,\Delta)$, we have $\mc{M}^* \models T$. Let $\mc{I}$ denote the back-and-forth system constructed in \cref{main-thm} for $\mc{M}^*$ and $\mc{M}^*$. To show that $b$ satisfies $\tp^{\mc{M}^*}(a|C)$, it suffices to show that there is $\iota \in \mc{I}$ such that $\iota$ fixes $C$ pointwise and $\iota(a_i) = b_i$ for all $i$. Let $\mu: \{a_1,\ldots,a_n\} \to \{b_1,\ldots,b_n\}$ be the function defined by $\mu(a_i) = b_i$ for each $i$. \\

Note that since $a,b \in D_n$, $\{a_1,\ldots,a_n\}$ and $\{b_1,\ldots,b_n\}$ are both algebraically independent over $\mbb{Q}(GA)$. We first construct $A^{\infty} \subseteq A$ as in \cref{qr-main-lemma}. Let $p = \tp_{\lor}^{\mc{M}^*}(A^{\infty}|a)$. By our assumption that $\tp^{\mc{M}}(a) = \tp^{\mc{M}}(b)$, $p$ is finitely satisfiable by elements of $A$ in $\mc{M}^*$. Thus, by $\kappa$-saturation of $\mc{M}^*$, there is a subset $B^{\infty}$ of elements of $A$ such that $\mu(\tp_{\lor}^{\mc{M}^*}(A^{\infty}|a)) \subseteq \tp_{\lor}^{\mc{M}^*}(B^{\infty}|b)$.\\

Let $F = \mbb{Q}(\RGD)$ and let
\begin{align*}
K^{\p} = F(a,A^{\infty})^{\rc}, \ \ G^{\p} = \ip{\Gamma}_G, \ \ A^{\p} = A^{\infty}, \\
L^{\p} = F(b,B^{\infty})^{\rc}, \ \ H^{\p} = \ip{\Gamma}_G, \ \ B^{\p} = B^{\infty}.
\end{align*}
By construction, $\mu(\tp_{\lor}^{\mc{M}^*}(A^{\infty}|a)) \subseteq \tp_{\lor}^{\mc{M}^*}(B^{\infty}|b)$. So we have an ordered field isomorphism $\iota: K^{\p} \to L^{\p}$ which takes $A^{\infty}$ to $B^{\infty}$ and $a$ to $b$. The fact that $\iota \in \mc{I}$ follows as in \cref{qr-main-lemma}.\\

To finish the proof, we must show that if $U \subseteq \mbb{R}^m$ is an open definable set in $\mc{R}^*$, then $U$ is definable in $\mc{R}$. Let $U \subseteq \mbb{R}^m$ be an open definable set in $\mc{R}^*$. Then $U$ is definable with finitely many parameters from $\mbb{R}$, say $\{\alpha_1,\ldots,\alpha_n\}$. Let $\alpha = (\alpha_1,\ldots,\alpha_n)$ and let $\phi(y,x)$ be an $\mc{L}^{\p}$-formula defining $U$, so that
\[
U = \{x \in \mbb{R}^m : \mc{R}^* \models \phi(\alpha,x)\}.
\]
Since $\mc{R}^* \preceq \mc{M}^*$, the set $V := \{x \in M^m : \mc{M}^* \models \phi(\alpha,x)\}$ is open and definable (in $\mc{M}^*$) over the set $\{\alpha_1,\ldots,\alpha_n\}$. By Corollary 3.1 in \cite{BH}, $V$ is definable in $\mc{M}$ over $\{\alpha_1,\ldots,\alpha_n\}$. Let $\psi$ be an $L$-formula such that $V = \{x \in M^m : \mc{M} \models \psi(\alpha,x)\}$. Now consider the definable set $U^{\p} := \{x \in \mbb{R}^m : \mc{R} \models \psi(\alpha,x)\}$. Since $\mc{R} \preceq \mc{M}$ and $\mc{R}^* \preceq \mc{M}^*$, we have $U^{\p} = U$. Therefore, $U$ is definable in $\mc{R}$.
\end{proof}

\section*{Acknowledgements}

The author would like to thank Philipp Hieronymi for his help and guidance in writing this paper. The author would also like to thank the referee for their many helpful comments. This work was supported in part by a gift to the Mathematics Department at the University of Illinois from Gene H. Golub and Hieronymi's NSF grant DMS-1300402 and UIUC Campus Research Board award 14194.

\begin{bibdiv}
\begin{biblist}

\bib{BelZilber}{article}{
author={Belegradek, Oleg},
author={Zilber,Boris},
title={The model theory of the field of reals with a subgroup of the unit circle},
journal={J. London Math. Soc.},
volume={78},
date={2008},
pages={563-579}
}

\bib{Bourbaki}{book}{
author={Bourbaki, Nicolas},
title={Algebra II: Chapters 4-7},
series={Elements of Mathematics},
publisher={Springer},
date={2003}
}

\bib{BH}{article}{
author={Boxall, Gareth},
author={Hieronymi,Philipp},
title={Expansions which introduce no new open sets},
date={2012},
journal={The Journal of Symbolic Logic},
volume={77},
pages={111-121}
}

\bib{tame-topology}{book}{
author={van den Dries, Lou},
title={Tame Topology and O-minimal Structures},
date={1998},
series={London Mathematical Society Lecture Note Series},
volume={248},
publisher={Cambridge University Press}
}

%\bib{Powersof2}{article}{
%author={van den Dries, Lou},
%title={The field of reals with a predicate for the powers of two},
%journal={Manuscripta Mathematica},
%volume={54},
%date={1985},
%pages={187-195}
%}

\bib{TconvexII}{article}{
author={van den Dries, Lou},
title={T-convexity and tame extensions II},
journal={The Journal of Symbolic Logic},
volume={62},
date={1997},
pages={14-34}
}

\bib{LouAyhan}{article}{
author={van den Dries, Lou},
author={G\"unayd\i n, Ayhan},
title={The fields of real and complex numbers with a small multiplicative group},
journal={Proceedings of the London Mathematical Society (3)},
volume={93},
pages={43-81},
date={2006},
}

\bib{ESS}{article}{
author={Evertse, J.-H.},
author={Schlickewei, H.P.},
author={Schmidt, W.M.},
title={Linear equations in variables which lie in a multiplicative group},
journal={Annals of Mathematics},
volume={155},
number={3},
pages={807-836},
date={2002}
}

\bib{Ayhan}{thesis}{
author={G\"unayd\i n, Ayhan},
title={Model theory of fields with multiplicative groups},
date={2008},
organization={University of Illinois at Urbana-Champaign},
type={Ph.D. Thesis}
}

\bib{AyhanPhilipp}{article}{
author={G\"unayd\i n, Ayhan},
author={Hieronymi,Philipp},
title={The real field with the rational points of an elliptic curve},
date={2011},
journal={Fundamenta Mathematicae},
volume={211},
pages={15-40}
}

\bib{Philipp}{article}{
author={Hieronymi, Philipp},
title={Defining the set of integers in expansions of the real field by a closed discrete set},
date={2010},
journal={Proceedings of the American Mathematical Society},
volume={138},
pages={2163-2168}
}

\bib{Kechris}{book}{
author={Kechris, Alexander S.},
title={Classical Descriptive Set Theory},
series={Graduate Texts in Mathematics},
volume={156},
publisher={Springer-Verlag},
date={1995}
}

\bib{pillay-steinhorn-I}{article}{
author={Pillay,Anand},
author={Steinhorn,Charles},
title={Definable sets in ordered structures},
journal={Bulletin of the American Mathematical Society},
volume={11},
number={1},
pages={159-162},
date={July 1984}
}

\bib{tycho}{thesis}{
author={Tychonievich, Michael},
title={Tameness results for expansions of the real field by groups},
date={2013},
organization={The Ohio State University},
type={Ph.D thesis}
}

\bib{Zakon}{article}{
author={Zakon, Elias},
title={Generalized archimedean groups},
journal={Trans. Amer. Math. Soc.},
volume={99},
date={1961},
pages={21-40}
}

\end{biblist}
\end{bibdiv}

\end{document}